\renewenvironment{proof}[1][\proofname]{\par\noindent{\bf #1\ }}{\hfill\BlackBox\\[2mm]}
\setlist{leftmargin=1.6em}
\newcommand{\Var}[0]{\mathrm{Var}}
\newcommand{\diag}[0]{\mathrm{diag}}
\newcommand{\sign}[0]{\mathrm{sign}}
\newcommand{\Tr}[0]{\mathrm{tr}}
\newcommand{\calC}[0]{\mathcal{C}}
\newcommand{\calP}[0]{\mathcal{P}}
\newcommand{\supp}{\mathrm{spt}}
\newcommand{\R}[0]{\mathbb{R}}
\newcommand{\vertiii}[1]{{\left\vert\kern-0.25ex\left\vert\kern-0.25ex\left\vert #1 
    \right\vert\kern-0.25ex\right\vert\kern-0.25ex\right\vert}}
\newcommand{\sS}{\mathsf{S}}
\newcommand{\cD}{\mathcal{D}}
\newcommand{\cP}{\mathcal{P}}
\newcommand{\cR}{\mathcal{R}}
\newcommand{\cX}{\mathcal{X}}
\newcommand{\EE}{\mathbb{E}}
\newcommand{\RR}{\mathbb{R}}
\newcommand{\vasti}{\bBigg@{3.5 }}
\newcommand{\vast}{\bBigg@{4}}
\newcommand{\Vast}{\bBigg@{5}}
\newcommand{\Vastt}{\bBigg@{7}}
\newcommand{\be}{\begin{equation}}
\newcommand{\ee}{\end{equation}}
\newcommand{\ba}{\begin{align}}
\newcommand{\ea}{\end{align}}
\newcommand{\baa}{\begin{align*}}
\newcommand{\eaa}{\end{align*}}
\newcommand{\argmin}{\mathop{\mathrm{argmin}}}
\newcommand{\argmax}{\mathop{\mathrm{argmax}}}
\newcommand{\KL}{\mathsf{D}_{\mathsf{KL}}}
\newcommand{\OT}{\mathsf{OT}}
\newcommand{\edit}[1]{{#1}}%
\DeclareMathOperator{\interior}{int}
\DeclareMathOperator{\diam}{diam}
\DeclareMathOperator{\Id}{Id}
\DeclareMathOperator{\conv}{conv}
\begin{document}

\title{Entropic Gromov-Wasserstein Distances:\\
Stability and Algorithms}

\author{\name Gabriel Rioux
\email ger84@cornell.edu \\
       \addr Center for Applied Mathematics\\
       Cornell University\\
       Ithaca, NY 14853, USA
       \AND
       \name  Ziv Goldfeld
\email goldfeld@cornell.edu \\
       \addr School of Electrical and Computer Engineering\\
       Cornell University\\
       Ithaca, NY 14853, USA
       \AND
       \name  Kengo Kato
\email kk976@cornell.edu \\
       \addr Department of Statistics and Data Science\\
       Cornell University\\
       Ithaca, NY 14853, USA
       }

\editor{}

\maketitle

\begin{abstract}%
\edit{
The Gromov-Wasserstein (GW) distance quantifies discrepancy between metric measure spaces and provides a natural framework for aligning heterogeneous datasets. Alas, as exact computation of GW alignment is NP hard, entropic regularization provides an avenue towards a computationally tractable proxy. Leveraging a recently derived variational representation for the quadratic entropic GW (EGW) distance, this work derives the first efficient algorithms for solving the EGW problem subject to formal, non-asymptotic convergence guarantees. To that end, we derive smoothness and convexity properties of the objective in this variational problem, which enables its resolution by the accelerated gradient method. Our algorithms employs Sinkhorn's fixed point iterations to compute an approximate gradient, which we model as an inexact oracle. We furnish convergence rates towards local and even global solutions (the latter holds under a precise quantitative condition on the regularization parameter), characterize the effects of gradient inexactness, and prove that stationary points of the EGW problem converge towards a stationary point of the unregularized GW problem, in the limit of vanishing regularization. We provide numerical experiments that validate our theory and empirically demonstrate the state-of-the-art empirical performance of our~algorithm.} 

\end{abstract}

\begin{keywords}
Algorithms, convergence rate, global guarantees, Gromov-Wasserstein distances, entropic regularization, inexact gradient methods.
\end{keywords}

\section{Introduction}

The Gromov-Wasserstein (GW) distance compares probability distributions that are supported on possibly distinct metric spaces by aligning them with one another. Given two metric measure (mm) spaces $(\mathcal X_0,\mathsf d_0,\mu_0)$ and $(\mathcal X_1,\mathsf d_1,\mu_1)$, the $(p,q)$-GW distance between them is 
\begin{equation}
\label{eq:GW_intro}
    \mathsf D_{p,q}(\mu_0,\mu_1)\coloneqq\inf_{\pi\in\Pi(\mu_0,\mu_1)} \left(\int_{\cX_0 \times \cX_1} \int_{\cX_0 \times \cX_1} \left|\mathsf d_0^q(x,x')-\mathsf d_1^q(y,y')\right|^p d\pi\otimes \pi(x,y,x',y')\right)^{\frac{1}{p}},
\end{equation}
where $\Pi(\mu_0,\mu_1)$ is the set of couplings between $\mu_0$ and $\mu_1$. This approach, proposed in \citet{Memoli11}, is an optimal transport (OT) based $L^p$-relaxation of the classical Gromov-Hausdorff distance between metric spaces. The GW distance defines a metric on the space of all mm spaces modulo measure preserving isometries.\footnote{Two mm spaces $(\mathcal X_0,\mathsf d_0,\mu_0)$ and $(\mathcal X_1,\mathsf d_1,\mu_1)$ are isomorphic if there exists an isometry $T:\mathcal X_0\to\mathcal{X}_1$ for which $\mu_0\circ T^{-1}=\mu_1$ as measures. The quotient space is then the one induced by this equivalence relation.} From an applied standpoint, a solution to the GW problem between two heterogeneous datasets yields not only a quantification of discrepancy, but also an optimal alignment $\pi^\star$ between them. As such, alignment methods inspired by the GW problem have been proposed for many applications, encompassing single-cell genomics \citep{blumberg2020mrec,demetci2020gromov}, alignment of language models \citep{alvarez2018gromov}, shape matching \citep{memoli2009spectral,koehl2023computing}, graph matching \citep{xu2019gromov,xu2019scalable}, heterogeneous domain adaptation \citep{yan2018semi}, and generative modeling \citep{bunne2019learning}.

Exact computation of the GW distance is a quadratic assignment problem, which is known to be NP-complete \citep{Commander2005}. To remedy this, various computationally tractable reformulations of the distance have been proposed. We postpone full discussion of such methods to \cref{subsec:literature} and focus here on the entropic GW (EGW) distance \citep{peyre2016gromov,solomon2016entropic}
\[
\sS_{p,q}^\varepsilon(\mu_0,\mu_1)\coloneqq\inf_{\pi\in\Pi(\mu_0,\mu_1)} \iint \left|\mathsf d_0^q(x,x')-\mathsf d_1^q(y,y')\right|^p d\pi\otimes \pi(x,y,x',y')+ \varepsilon \mathsf{D}_{\mathsf {KL}}(\pi\|\mu_0\otimes \mu_1),
\]
which is at the center of this work. Entropic regularization by means of the Kullback-Leibler (KL) divergence above transforms the linear Kantorovich OT  problem \citep{kantorovich1942translocation} into a strictly convex one and enables directly solving it using Sinkhorn's algorithm \citep{cuturi2013sinkhorn}. Although the EGW problem is, in general, not convex, \citet{solomon2016entropic} propose to solve it via an iterative approach with Sinkhorn iterations. This method is known to converge to a stationary point of a tight (albeit non-convex) relaxation of the EGW problem, but this is an asymptotic statement and the overall computational complexity is unknown. Similar limitations apply for the popular mirror descent-based approach from \citet{peyre2016gromov}. To the best of our knowledge, there is currently no known algorithm for computing the EGW distance subject to non-asymptotic convergence rate bounds, let alone global optimality guarantees. \edit{Further, these methods do not account for the error incurred by using Sinkhorn iterations, nor do they address the behaviour of the solutions they obtain in the limit of vanishing regularization.}

The goal of this work is to close the aforementioned computational gaps, \edit{targeting algorithms with non-asymptotic guarantees, accounting for inexactness in Sinkhorn's algorithm, characterizing convexity regimes of the EGW problem, and establishing convergence of stationary points to the EGW problem to stationary points of the GW problem as $\varepsilon\downarrow 0$}. All of these will be achieved as a consequence of a new stability analysis of the EGW variational representation from \citet{Zhang22Gromov}. %

\subsection{Contributions}

Theorem 1 in \citet{Zhang22Gromov} shows that the EGW distance with quadratic cost between the Euclidean mm spaces $(\RR^{d_0},\|\cdot\|,\mu_0)$ and $(\RR^{d_1},\|\cdot\|,\mu_1)$ can be written as:
\begin{equation}
\sS_{2,2}^\varepsilon(\mu_0,\mu_1)=C_{\mu_0,\mu_1}+\inf_{\bm A\in \cD_M} \big\{32\|\bm A\|_F^2+\mathsf{OT}_{\bm A,\varepsilon}(\mu_0,\mu_1)\big\},\label{eq:dual_intro}
\end{equation}
where $C_{\mu_0,\mu_1}$ is a constant that depends only on the moments of the marginals, $\cD_M \subset \RR^{d_0\times d_1}$ is a compact rectangle, and $\mathsf{OT}_{\bm A,\varepsilon}(\mu_0,\mu_1)$ is an EOT problem with a particular cost function that depends on the auxiliary variable $\bm A$. This representation connects EGW to the well-understood EOT problem, thereby unlocking powerful tools for analysis. To exploit this connection for new computational advancements, we begin by analyzing the stability of the objective in \eqref{eq:dual_intro} in $\bm A$ and derive its first and second-order Fr{\'e}chet derivatives. This, in turn, enables us to derive its convexity and smoothness properties and devise new algorithms for solving the EGW problem, subject to formal convergence guarantees.

The Fr{\'e}chet derivatives of the objective function from  \eqref{eq:dual_intro} in $\bm{A}$ reveal that this problem falls under the paradigm of smooth constrained optimization. Indeed, the derivatives imply, respectively, upper and lower bounds on the top and bottom eigenvalues of the Hessian matrix of the objective; $L$-smoothness then follows from the mean value inequality. By further requiring positive semidefiniteness of the Hessian we obtain a sharp and primitive sufficient condition on $\varepsilon$ under which \eqref{eq:dual_intro} becomes convex. 
These regularity properties are used to lift the accelerated first-order methods for smooth (non-convex)  optimization \citep{ghadimi2016accelerated} and convex programming with an inexact oracle \citep{Aspremont2008smooth} to the EGW problem.
This yields the first algorithms with non-asymptotic convergence guarantees toward global or local EGW solutions, depending on whether the problem is convex or not (e.g., under the aforementioned condition). Our algorithms compute not only the EGW cost, but also provide an approximate optimizer---namely a coupling, which serves as the alignment scheme that achieves the said cost.

Specifically, our method iteratively solves the optimization problem in the space of auxiliary matrices $\bm A\in\RR^{d_0\times d_1}$, with each iterate calling the Sinkhorn algorithm to obtain an approximate solution (viz. the inexact oracle) to the corresponding EOT problem. The time complexity of the Sinkhorn algorithm governs the overall runtime, which is therefore $O(N^2)$, for $\mu_0$ and $\mu_1$ as distributions on $N$ points. This presents a significant speedup to the $O(N^3)$ runtime of popular iterative algorithms from \citet{peyre2016gromov,solomon2016entropic}. Under certain low-rank assumptions on the cost matrix, \citet{scetbon2022linear} recently showed the mirror descent approach from \citet{peyre2016gromov} can be sped up to run in $O(N^2)$ time, which is comparable to our method. Nevertheless, our algorithms are coupled with~formal convergence guarantees, non-asymptotic error bounds, and global optimality claims under the said convexity condition, while no such assurances are available for other~methods. 

\edit{As the derivative of the objective from \eqref{eq:dual_intro} depends on the optimal coupling for a particular EOT problem, we also account for the error incurred by solving this problem numerically (e.g. via Sinkhorn iterations). In particular, we characterize how well the output of a standard implementation of Sinkhorn's algorithm approximates the true EOT coupling. This effect was not analyzed before, as existing literature focused on approximating the unregularized OT cost, while treating the KL divergence term as a bias. }

\edit{The EGW distance serves as a proxy to unregularized GW, which renders the vanishing regularization parameter regime, namely $\varepsilon\downarrow 0$, of central importance. %
We show that stationary points of the variational EGW problem converge (possibly along a subsequence) to a stationary point of the variational GW problem %
 as $\varepsilon\downarrow 0$. Only convergence to a stationary point can be guaranteed in the limit since the underlying variational problem  may fail to be convex when $\varepsilon$ is small. Nonetheless, this is the first result that provides stationarity guarantees for limiting solutions, which clarifies how the local solutions obtained using our algorithm approximate local solutions to the unregularized problem. }

\subsection{Literature review}
\label{subsec:literature} 

The computational intractability of the GW problem in \eqref{eq:GW_intro} has inspired several reformulations that aim to alleviate this issue.  %
The sliced GW distance \citep{vayer2020sliced} attempts to reduce the computational burden by considering the average of GW distances between one-dimensional projections of the marginals. However, unlike OT in one dimension, the GW problem does not have a known simple solution even in one dimension \citep{beinert2022assignment}. Another approach is to relax the strict marginal constraints by optimizing over the weights of one of the marginals as in semi-relaxed GW \citep{vincent2022semi} or by using $f$-divergence penalties; this leads to the unbalanced GW distance \citep{sejourne2021unbalanced}, which lends itself well for convex/conic relaxations. A variant that directly optimizes over bi-directional Monge maps between the mm spaces was considered in \citet{zhang2022cycle}. \edit{The fused GW distance \citep{vayer2018optimal} enables comparing both feature and structural properties of structured data.} \edit{Although most of these relaxations can be shown to converge to the GW distance (in terms of function values) under certain regimes, they involve solving non-convex problems, which limits their utility for numerical resolution of the GW problem. The recent work of \citet{chen2023semidefinite} proposes a semidefinite relaxation of the GW problem along with a certificate of optimality which, upon obtaining a solution to the relaxed problem,  establishes if it is optimal for the original problem. } 

While these methods offer certain advantages, it is the approach based on entropic regularization \citep{peyre2016gromov,solomon2016entropic} that is most frequently used in application. A low-rank variant of the EGW problem was proposed in \citet{scetbon2022linear}, where the distance distortion cost is only optimized over coupling admitting a certain low-rank structure. They arrive at a linear time algorithm for this problem by adapting the mirror descent method of \citet{peyre2016gromov}.
As an intermediate step of their analysis, they show that if $\mu_0$ and $\mu_1$ are supported on $N$ distinct points, then the $O(N^3)$ complexity of mirror descent (see, e.g., Remark 1 in \citealt{peyre2016gromov}) can be reduced to $O(N^2)$ by assuming that the matrices of pairwise costs admit a low-rank decomposition (without imposing any structure on the couplings). This decomposition holds, for instance, when the cost is the squared Euclidean distance and the sample size dominates the ambient dimension. Although mirror descent seems to solve the EGW problem quite well in practice, formal guarantees concerning convergence rates or local optimality are lacking.%

Other related work explores structural properties of GW distances, on which some of our findings also reflect. The existence of Monge maps for the GW problem was studied in \citet{dumont2022existence} and they show that optimal couplings are induced by a bimap (viz. two-way map) under general conditions. \citet{delon2022gromov} focused on the GW distance between Gaussian distributions, deriving upper and lower bounds on the optimal cost. A closed-form expression under the Gaussian setting was derived in \citet{le2022entropic} for the EGW distance with inner product cost.

\edit{
As we establish stability results for the EGW problem by utilizing its connection to the EOT problem with a parametrized cost, we mention that different notions of stability for EOT have been studied. For instance, \citet{ghosal2022stability} concerns stability of the EOT cost for varying marginals, cost function, and regularization parameter. The related works \citep{Carlier2020Differential,eckstein2022quantitative,nutz2023stability} concern stability of the EOT cost and related objects for weakly convergent marginal distributions.
}

\subsection{Organization} 

\label{sec: organization}

This paper is organized as follows. In \cref{sec: background} we compile background material on EOT and the EGW problems. In 
\cref{sec:asymptoticNormalityCompact}, we describe the smoothness and convexity of the variational EGW problem. 
In \cref{sec:Computation}, we analyze and test two algorithms for solving the EGW problem. 
We compile the proofs for Sections \ref{sec:asymptoticNormalityCompact} and  \ref{sec:Computation} %
in \cref{sec:Proofs}. \cref{sec: summary} contains some concluding remarks.

\subsection{Notation}\label{subsec:Notation} Denote by  $\calP(\mathbb R^d)$ the collection of all Borel probability measures on $\mathbb R^d$, %
and by %
$\mathcal P_p(\mathbb R^d)$ the set of all $\mu \in \calP(\R^d)$ with finite $p$-th moment ($p>0$). 
The pushforward of $\mu\in\mathcal P(\mathbb R^{d_0})$ through a measurable map $T:\mathbb R^{d_0}\to \mathbb R^{d_1}$ is denoted by $T_{\sharp}\mu := \mu \circ T^{-1}$. 
The Frobenius inner product on $\mathbb R^{d_0\times d_1}$ is defined by $\langle \bm A,\bm B\rangle_F=\Tr\left(\bm A^{\intercal}\bm B\right)$; the associated norm is denoted by $\|\cdot\|_F$. For a nonemtpy set $S\subset\mathbb R^d$, $\mathcal C(S)$ is the set of continuous functions on $S$. We adopt the shorthands $a\wedge b=\min\{a,b\}$ and $a\vee b=\max\{a,b\}$.

\section{Background and Preliminaries}\label{sec: background}

We first establish notation %
and review standard definitions and results underpinning our analysis of the EGW distance.

\subsection{Entropic Optimal Transport}\label{subsec:EOT}
Entropic regularization transforms the linear OT problem into a strictly convex one. Given distributions $\mu_i\in\cP(\RR^{d_i})$, $i=0,1$, and a Borel cost function $c:\RR^{d_0}\times \RR^{d_1}\to\RR$ that is bounded from below on $\supp(\mu_0) \times \supp(\mu_1)$, the primal EOT problem is obtained by regularizing the standard OT problem via the Kullback-Leibler (KL) divergence,
\[
    \OT_{\varepsilon}(\mu_0,\mu_1)=\inf_{\pi\in\Pi(\mu_0,\mu_1)} \int c\,d\pi+\varepsilon \KL(\pi\|\mu_0\otimes \mu_1), 
\]
where $\varepsilon>0$ is a regularization parameter and 
\[
    \KL(\mu_0\|\mu_1)=\begin{cases} \int \log \left(\frac{d\mu_0}{d\mu_1}\right)d\mu_0,&\text{if }\mu_0\ll\mu_1,\\+\infty,&\text{otherwise.}
    \end{cases}
\]
Classical OT is obtained from the above by setting $\varepsilon=0$. When $c\in L^1(\mu_0\otimes \mu_1)$,
EOT admits the following dual formulation, 
\[
    \OT_{\varepsilon}(\mu_0,\mu_1)=\sup_{(\varphi_0,\varphi_1)\in L^1(\mu_0)\times L^1(\mu_1)}\int \varphi_0d\mu_0+\int\varphi_1d\mu_1-\varepsilon \int e^{\frac{\varphi_0\oplus\varphi_1-c}{\varepsilon}}d\mu_0\otimes \mu_1+\varepsilon,
\]
where $\varphi_0\oplus \varphi_1(x,y)=\varphi_0(x)+\varphi_1(y)$. For $\varepsilon>0$, the set of solutions to the dual problem coincides with the set of solutions to the so-called Schr{\"o}dinger system, 
\begin{equation}
\label{eq:SchrodingerSystem}
\begin{aligned}
\int e^{\frac{\varphi_0(x)+\varphi_1(\cdot)-c(x,\cdot)}{\varepsilon}}d\mu_1=1,\quad\text{$\mu_0$-a.e. $x\in\RR^{d_0}$},\\
\int e^{\frac{\varphi_0(\cdot)+\varphi_1(y)-c(\cdot,y)}{\varepsilon}}d\mu_0=1,\quad\text{$\mu_1$-a.e. $y\in\RR^{d_1}$},
\end{aligned}
\end{equation}
for $(\varphi_0,\varphi_1)\in L^1(\mu_0)\times L^1(\mu_1)$. 
A pair $(\varphi_0,\varphi_1)\in L^1(\mu_0)\times L^1(\mu_1)$ solving \eqref{eq:SchrodingerSystem} is known to be a.s. unique up to additive constants in the sense that any other solution $(\bar\varphi_0,\bar\varphi_1)$ satisfies $\bar\varphi_0=\varphi_0+a$ $\mu_0$-a.s. and $\bar\varphi_1=\varphi_1-a$ $\mu_1$-a.s. for some $a \in \R$. 
Moreover, the {unique} EOT coupling $\pi_{\varepsilon}$ is characterized by
\begin{equation}
\label{eqn:EOTcoupling}   
   \frac{d\pi_{\varepsilon}}{d\mu_0\otimes \mu_1}(x,y)=e^{\frac{\varphi_0(x)+\varphi_1(y)-c(x,y)}{\varepsilon}},
\end{equation}
and, under some additional conditions on the cost and marginals which hold throughout this paper, \eqref{eq:SchrodingerSystem} admits a pair of continuous solutions which is unique up to additive constants and satisfies the system at all points $(x,y)\in \RR^{d_0}\times \RR^{d_1}$. We call such continuous solutions EOT potentials. The reader is referred to \citet{nutz2021introduction} for a comprehensive overview of EOT. %

\subsection{Entropic Gromov-Wasserstein Distance}\label{subsec:EGW}
This work studies stability and computational aspects of the entropically regularized GW distance under the quadratic cost. By analogy to OT, EGW serves as a proxy of the standard $(p,q)$-GW distance, which quantifies discrepancy between complete and separable mm spaces $(\cX_0,\mathsf d_0,\mu_0)$ and $(\cX_1,\mathsf d_1,\mu_1)$ as \citep{Memoli11,sturm2012space}
\[
    \mathsf{D}_{p,q}(\mu_0,\mu_1)\coloneqq \inf_{\pi\in\Pi(\mu_0,\mu_1)} \|\Gamma_q\|_{L^p(\pi\otimes\pi)},
\]
where $\Gamma_{q}(x,y,x',y')=\big| \mathsf d_0^q(x,x')-\mathsf d_1^q(y,y')\big|$ is the distance distortion cost. This definition is the $L^p$-relaxation of the Gromov-Hausdorff distance between metric spaces,\footnote{The Gromov-Hausdorff distance between $(\cX_0,\mathsf d_0)$ and $(\cX_1,\mathsf d_1)$ is given by $\frac 12 \inf_{R\in\cR(\cX_0,\cX_1)}\|\Gamma_1\|_{L^\infty(R)}$, where $\cR(\cX_0,\cX_1)$ is the collection of all correspondence sets of $\cX_0$ and $\cX_1$, i.e., subsets $R\subset\cX_0\times\cX_1$ such that the coordinate projection maps
are surjective when restricted to $R$. The correspondence set can be thought of as $\supp(\pi)$ in the GW formulation.} and gives rise to a metric on the collection of all isomorphism classes of mm spaces %
with finite $pq$-size, i.e., such that $\int \mathsf d(x,x')^{pq}\,d\mu\otimes \mu(x,x')<\infty$.

\medskip
From here on out, we instantiate the mm spaces as the Euclidean spaces $(\RR^{d_i},\|\cdot\|,\mu_i)$, for $i=0,1$, and {focus on the EGW distance for the quadratic cost}.

\paragraph{Quadratic Cost}
\label{subsubsec:EGWQuadCost}
The quadratic EGW distance, which corresponds to the $p=q=2$ case, is defined as
\begin{equation}
\label{eq:QuadEGW}
    \mathsf{S}_{\varepsilon}(\mu_0,\mu_1)=\inf_{\pi\in\Pi(\mu_0,\mu_1)}\int \left|\|x-x'\|^2-\|y-y'\|^2\right|^2d\pi\otimes\pi(x,y,x',y')+\varepsilon \KL(\pi\|\mu_0\otimes \mu_1).   
\end{equation}
One readily verifies that, like the standard GW distance, EGW is invariant to isometric actions on the marginal spaces such as orthogonal rotations and translations. In addition,  note that $\sS_{\varepsilon}(\mu_0,\mu_1)=\varepsilon\sS_1(\mu_0^{\varepsilon},\mu_1^{\varepsilon})$, where $\mu_i^{\varepsilon}=(\varepsilon^{-1/4}\Id)_{\sharp}\mu_i$. 
\edit{In general, \eqref{eq:QuadEGW} is a non-convex quadratic program. Non-convexity can easily be discerned from the  representation \eqref{eq:EGWDecomp}.}

\medskip
When $\mu_0,\mu_1$ are centered, which we may assume without loss of generality, the EGW distance decomposes\footnote{A similar decomposition holds for the inner product cost, obtained by replacing the squared Euclidean distances in \cref{eq:QuadEGW} by inner products. As such, all results derived in this manuscript apply to the inner product cost with minor modifications.} as (cf. Section 5.3 in \citealt{Zhang22Gromov})%
\begin{equation}
    \sS_{\varepsilon}(\mu_0,\mu_1)=\sS^1(\mu_0,\mu_1)+\sS^{2}_{\varepsilon}(\mu_0,\mu_1),\label{eq:EGWDecomp}
\end{equation}
where %
\begin{align*}
&\sS^1(\mu_0,\mu_1)\mspace{-3mu}=\mspace{-3mu}\int\mspace{-3mu}\|x\mspace{-2mu}-\mspace{-2mu}x'\|^4d\mu_0\mspace{-2mu}\otimes\mspace{-2mu}\mu_0(x,x')\mspace{-3mu}+\mspace{-3mu}\int\mspace{-3mu}\|y\mspace{-2mu}-\mspace{-2mu}y'\|^4d\mu_1\mspace{-2mu}\otimes\mspace{-2mu}\mu_1(y,y')\mspace{-3mu}-\mspace{-3mu}4\mspace{-4mu}\int\mspace{-3mu}\|x\|^2\|y\|^2d\mu_0\mspace{-2mu}\otimes\mspace{-2mu}\mu_1(x,y),\\
&\sS^{2}_{\varepsilon}(\mu_0,\mu_1)\mspace{-3mu}=\mspace{-3mu}\inf_{\pi\in\Pi(\mu_0,\mu_1)}\int\mspace{-3mu}-4\|x\|^2\|y\|^2d\pi(x,y)\mspace{-3mu}-\mspace{-3mu}8\mspace{-3.5mu}\sum_{\substack{1\leq i\leq d_0\\1\leq j\leq d_1}}\mspace{-4mu}\left(\int\mspace{-3mu} x_iy_j\,d\pi(x,y)\right)^2\mspace{-3mu}+\mspace{-3mu}\varepsilon\KL(\pi\|\mu_0\mspace{-2mu}\otimes\mspace{-2mu} \mu_1).
\end{align*}
Evidently, $\sS^1$ depends only on the moments of the marginal distributions $\mu_0,\mu_1$, while $\sS^{2}_{\varepsilon}$ captures the dependence on the coupling. A key observation in \citet{Zhang22Gromov} is that $\sS^{2}_{\varepsilon}$ admits a variational form that ties it to the well understood EOT problem. %

\begin{lemma}[EGW duality; Theorem 1 in \citealt{Zhang22Gromov}]
\label{lem:EGWDual}
    Fix $\varepsilon>0$, $(\mu_0,\mu_1)\in\cP_4(\RR^{d_0})\times\cP_4(\RR^{d_1})$, and let $M_{\mu_0,\mu_1}\coloneqq\sqrt{M_2(\mu_0)M_2(\mu_1)}$. Then, for any $M\geq M_{\mu_0,\mu_1}$, 
    \begin{equation}
    \label{eq:S2Decomp}
       \sS^{2}_{\varepsilon}(\mu_0,\mu_1)=\inf_{\bm A\in\cD_M}32\|\bm A\|_F^2+\OT_{\bm A,\varepsilon}(\mu_0,\mu_1),
     \end{equation}
    where $\cD_M\coloneqq\{\bm A\in\mathbb R^{d_0\times d_1}:\|\bm A\|_F\leq M/2\}$ and $\OT_{\bm A,\varepsilon}(\mu_0,\mu_1)$ is the EOT problem with the cost function $c_{\bm A}:(x,y)\in\RR^{d_0}\times \RR^{d_1}\mapsto-4\|x\|^2\|y\|^2-32x^{\intercal}\bm Ay$ and regularization parameter $\varepsilon$. 
    Moreover, the infimum is achieved at some $\bm A^{\star}\in\cD_{M_{\mu_0,\mu_1}}$. %
\end{lemma}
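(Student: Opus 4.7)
The core idea is to linearize the quadratic-in-$\pi$ term
\begin{equation*}
-8\sum_{\substack{1\leq i\leq d_0\\ 1\leq j\leq d_1}}\left(\int x_i y_j\,d\pi\right)^{\!2}\mspace{-4mu}=\,-8\|\bm M(\pi)\|_F^2,\qquad \bm M(\pi) \coloneqq \int xy^\intercal\,d\pi \in \RR^{d_0\times d_1},
\end{equation*}
by a Frobenius-norm completion of squares. The elementary identity
\begin{equation*}
32\|\bm A\|_F^2 - 32\langle \bm A,\bm M\rangle_F = 32\bigl\|\bm A - \tfrac{1}{2}\bm M\bigr\|_F^2 - 8\|\bm M\|_F^2
\end{equation*}
yields $-8\|\bm M\|_F^2 = \inf_{\bm A \in \RR^{d_0\times d_1}}\{32\|\bm A\|_F^2 - 32\langle \bm A,\bm M\rangle_F\}$, with the unique minimum attained at $\bm A = \bm M/2$. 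My next step is a Cauchy--Schwarz bound on this minimizer: for every $\pi \in \Pi(\mu_0,\mu_1)$,
\begin{equation*}
\|\bm M(\pi)\|_F \leq \int \|x\|\,\|y\|\,d\pi \leq \sqrt{M_2(\mu_0) M_2(\mu_1)} = M_{\mu_0,\mu_1},
\end{equation*}
so $\bm M(\pi)/2 \in \cD_{M_{\mu_0,\mu_1}} \subseteq \cD_M$. The variational identity therefore still holds when the infimum is restricted to $\cD_M$.

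Substituting this restricted identity back into the formula for $\sS^2_\varepsilon$, and recognizing that $32\langle \bm A, \bm M(\pi)\rangle_F = \int 32 x^\intercal \bm A y\,d\pi$, the inner integrand collapses precisely to $c_{\bm A}(x,y) = -4\|x\|^2\|y\|^2 - 32 x^\intercal \bm A y$. Exchanging the two infima over $\pi$ and $\bm A$ (permissible for iterated infima without any further hypothesis), the inner infimum over $\pi\in\Pi(\mu_0,\mu_1)$ is exactly the EOT problem $\OT_{\bm A,\varepsilon}(\mu_0,\mu_1)$, which establishes \eqref{eq:S2Decomp}.

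Attainment of the infimum in $\cD_{M_{\mu_0,\mu_1}}$ then follows from compactness of this set together with continuity of $\bm A \mapsto 32\|\bm A\|_F^2 + \OT_{\bm A,\varepsilon}(\mu_0,\mu_1)$: the EOT term is concave in $\bm A$, being an infimum over $\pi$ of functionals that are affine in $\bm A$, and hence continuous on the interior of its domain of finiteness. The main technical subtlety is verifying integrability throughout: the assumption $\mu_i \in \cP_4$ is precisely what is needed, since by Cauchy--Schwarz applied to the marginal constraints we have $\int \|x\|^2\|y\|^2\,d\pi \leq \sqrt{M_4(\mu_0) M_4(\mu_1)} < \infty$ for all $\pi\in\Pi(\mu_0,\mu_1)$, so that $c_{\bm A}\in L^1(\pi)$ uniformly for $\bm A$ in bounded subsets of $\RR^{d_0\times d_1}$. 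This uniform integrability keeps $\OT_{\bm A,\varepsilon}$ finite on $\cD_M$ and legitimizes each step of the variational manipulation above.
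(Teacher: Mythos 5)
The paper does not give its own proof of this lemma: it cites it directly as Theorem 1 in \citet{Zhang22Gromov}, and merely gestures at the key ingredients immediately afterward (that the minimizer is $\bm A^{\star}=\tfrac{1}{2}\int xy^{\intercal}d\pi_{\star}$ and that Jensen plus Cauchy--Schwarz place $\bm A^{\star}$ in $\cD_{M_{\mu_0,\mu_1}}$). Your derivation is a correct, self-contained proof that recovers precisely this mechanism: the Frobenius complete-the-square identity $-8\|\bm M\|_F^2=\inf_{\bm A}\{32\|\bm A\|_F^2-32\langle\bm A,\bm M\rangle_F\}$ with unique minimizer $\bm A=\bm M/2$, the Jensen/Cauchy--Schwarz bound $\|\bm M(\pi)\|_F\le M_{\mu_0,\mu_1}$ ensuring the inner minimizer lies in $\cD_{M_{\mu_0,\mu_1}}\subseteq\cD_M$ for every $\pi$ (so that restriction of the inner infimum costs nothing), the trace identity $\langle\bm A,\bm M(\pi)\rangle_F=\int x^{\intercal}\bm A y\,d\pi$ collapsing the integrand to $c_{\bm A}$, and the unconditional exchange of iterated infima. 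The attainment argument is also sound: $\OT_{(\cdot),\varepsilon}(\mu_0,\mu_1)$ is real-valued on all of $\RR^{d_0\times d_1}$ (bounded above by the product coupling, bounded below via Cauchy--Schwarz and the fourth-moment assumption), hence concave and therefore continuous everywhere, so the infimum is attained on the compact set $\cD_{M_{\mu_0,\mu_1}}$ and coincides with the infimum over $\cD_M$. Your phrase ``continuous on the interior of its domain of finiteness'' is slightly weaker than needed, but you in fact establish that the domain of finiteness is all of $\RR^{d_0\times d_1}$, so continuity holds on the whole closed ball. In short: correct, and in the same spirit as the argument the paper attributes to \citet{Zhang22Gromov}.
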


\edit{An analogous result holds in the unregularized ($\varepsilon=0$) case, see Corollary 1 in \citet{Zhang22Gromov}.}
The proof of Theorem 1 in \citet{Zhang22Gromov} demonstrates that if $\mu_0$ and $\mu_1$ are centered and $\pi_{\star}$ is optimal for the original EGW formulation, then $\bm A^{\star}=\frac{1}{2}\int xy^{\intercal}\,d\pi_{\star}(x,y)$ is optimal for \eqref{eq:S2Decomp} %
{and $\pi_{\star} = \pi_{\bm A{^\star}}$}, where 
{$\pi_{\bm A^{\star}}$ is the unique EOT coupling for $\OT_{\bm A^{\star},\varepsilon}(\mu_0,\mu_1)$}. It follows from Jensen's inequality and the Cauchy-Schwarz inequality that $\bm A^{\star}\in\mathcal D_M$.  \cref{cor:StationaryPoint} ahead expands on this connection by establishing a one-to-one correspondence between solutions of $\sS_{\varepsilon}$ and $\sS^{2}_{\varepsilon}$ and shows that all solutions of \eqref{eq:S2Decomp} lie in $\mathcal D_M$.

Although \eqref{eq:S2Decomp} illustrates a connection between the EGW and EOT problems, the outer minimization over $\cD_M$ necessitates studying EOT with a parametrized cost function $c_{\bm A}$. 

\section{Stability of Entropic Gromov-Wasserstein Distances}
\label{sec:asymptoticNormalityCompact}
\label{subsec:QuadraticDerivative}
We analyze the stability of the EGW problems with respect to (w.r.t.) the matrix $\bm A$ 
appearing in the dual formulation \eqref{eq:S2Decomp}. %
Specifically, we characterize the first and second derivatives of the objective function whose optimization defines $\sS^{2}_{\varepsilon}$. These, in turn, elucidates its smoothness and convexity properties. %
Our stability analysis is later used to (i) gain insight into the structure of optimal couplings for the EGW problem; and (ii) devise novel approaches for computing the EGW distance with formal convergence guarantees.

\medskip
Throughout this section, we restrict attention to compactly supported distributions, as some of the technical details do not directly translate to the unbounded setting (e.g.,~the proof of \cref{lem:BanachSpaceIsomorphism}). For a Fr{\'e}chet differentiable map $F:U\to V$ between normed vector spaces $U$ and $V$,\footnote{ A map $F:U\to V$ is Fr{\' e}chet differentiable at $u\in U$ if there exists a bounded linear operator $A:U\to V$ for which $F(u+h)=F(u)+Ah+o(h)$ as $h\to 0$. If such an $A$ exists, it is called the Fr{\'e}chet derivative to $F$ at $u$.} we denote the derivative of $F$ at the point $u\in U$ evaluated at $v\in V$ by $D F_{[u]}(v)$.

 Fix compactly supported distributions $(\mu_0,\mu_1)\in\cP(\RR^{d_0})\times\cP(\RR^{d_1})$ and some $\varepsilon>0$. Let 
\[
\Phi:\bm A \in \mathbb R^{d_0\times d_1}\mapsto 32\|\bm A\|^2_{F}+\OT_{\bm A,\varepsilon}(\mu_0,\mu_1)
\] 
denote the objective in \eqref{eq:S2Decomp}. 
We first characterize the derivatives of $\Phi$ and then prove that this map is weakly convex and $L$-smooth.\footnote{\label{footnote:weakConvLSmooth}A function $f:\RR^d\to \RR$ is $\rho$-weakly convex if $f+\frac{\rho}{2}\|\cdot\|^2$ is convex; $f$ is $L$-smooth if its gradient is $L$-Lipschitz, i.e., $\|\nabla f(x)-\nabla f(y)\|\leq L\|x-y\|$, for all $x,y\in\RR^d$.}

\begin{proposition}[First and second derivatives]
    \label{prop:PhiDerivative}
   The map $\Phi:\bm A\in\RR^{d_0\times d_1}\mapsto 32\|\bm A\|_F^2+\OT_{\bm A,\varepsilon}(\mu_0,\mu_1)$ is smooth, coercive, and has first and second-order Fr{\'e}chet derivatives at $\bm A\in \RR^{d_0\times d_1}$ given by
   \begin{align*}
        D\Phi_{[\bm A]}(\bm B)&=64\,\Tr(\bm A^{\intercal}\bm B)-32\int x^{\intercal}\bm B y\,d\pi_{\bm A}(x,y), \\
        D^2\Phi_{[\bm A]}(\bm B,\bm C)&=64\,\Tr(\bm B^{\intercal}\bm C)+32\varepsilon^{-1}\int x^{\intercal}\bm B y \left(h_0^{\bm A,\bm C}(x)+h_1^{\bm A,\bm C}(y)-32x^{\intercal}\bm Cy\right)d\pi_{\bm A}(x,y),
   \end{align*}
   where  $\bm B,\bm C\in\RR^{d_0\times d_1}$, $\pi_{\bm A}$ is the unique EOT coupling for $\OT_{\bm A,\varepsilon}(\mu_0,\mu_1)$, and $\big(h_0^{\bm A,\bm C},h_1^{\bm A,\bm C}\big)$ is the unique (up to additive constants) pair of functions in $\calC(\supp(\mu_0))\times\calC(\supp(\mu_1))$ satisfying 
    \begin{equation}
    \label{eq:potentialDerivativeSystem}
   \begin{aligned}
      \int\left(h_0^{\bm A,\bm C}(x)+h_1^{\bm A,\bm C}(y)-32x^{\intercal}\bm Cy\right)e^{\frac{\varphi_0^{\bm A}(x)+\varphi_1^{\bm A}(y)-c_{\bm A}(x,y)}{\varepsilon}}d\mu_1(y)=0, \quad \forall\,x\in\supp(\mu_0),\\
    \int\left(h_0^{\bm A,\bm C}(x)+h_1^{\bm A,\bm C}(y)-32x^{\intercal}\bm Cy\right)e^{\frac{\varphi_0^{\bm A}(x)+\varphi_1^{\bm A}(y)-c_{\bm A}(x,y)}{\varepsilon}}d\mu_0(x)=0,  \quad\forall \,y\in\supp(\mu_1).
   \end{aligned}
   \end{equation} 
   Here, $(\varphi_0^{\bm A},\varphi_1^{\bm A})$ is any pair of EOT potentials for $\OT_{\bm A,\varepsilon}(\mu_0,\mu_1)$. %
\end{proposition}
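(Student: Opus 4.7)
The plan is to reduce everything to the regularity of $\bm A \mapsto \OT_{\bm A,\varepsilon}(\mu_0,\mu_1)$, since the quadratic term $32\|\bm A\|_F^2$ trivially contributes $64\,\Tr(\bm A^{\intercal}\bm B)$ and $64\,\Tr(\bm B^{\intercal}\bm C)$ to the first and second derivatives. \textbf{Coercivity} is immediate: by Cauchy--Schwarz and compactness of $\supp(\mu_0)\times\supp(\mu_1)$, the affine cost $c_{\bm A}(x,y)=-4\|x\|^2\|y\|^2-32x^{\intercal}\bm A y$ admits a pointwise lower bound of the form $-C_0 - C_1\|\bm A\|_F$, which passes to $\OT_{\bm A,\varepsilon}$ after discarding the nonnegative KL term, and is dominated by $32\|\bm A\|_F^2$.

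\textbf{First derivative.} Since $\bm A \mapsto c_{\bm A}$ is affine with directional derivative $\partial_{\bm B}c_{\bm A}(x,y) = -32 x^{\intercal}\bm B y$, I would apply an envelope (Danskin) argument to the EOT dual representation. Differentiating the dual objective at fixed optimal potentials $(\varphi_0^{\bm A},\varphi_1^{\bm A})$ and using that $e^{(\varphi_0^{\bm A}\oplus\varphi_1^{\bm A}-c_{\bm A})/\varepsilon}d\mu_0\otimes\mu_1 = d\pi_{\bm A}$ at the optimum yields
\[
D[\OT_{\bm A,\varepsilon}]_{[\bm A]}(\bm B) \,=\, \int \partial_{\bm B} c_{\bm A}(x,y)\,d\pi_{\bm A}(x,y) \,=\, -32\int x^{\intercal}\bm B y\,d\pi_{\bm A}(x,y).
\]
Combined with the quadratic term, this matches the stated first-order formula.

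\textbf{Smooth dependence of potentials.} For the second derivative I must differentiate $\pi_{\bm A}$ in $\bm A$, which reduces to smoothness of the potentials. I would apply the implicit function theorem (IFT) to the Schr\"odinger system \eqref{eq:SchrodingerSystem}, viewed as a map
\[
F:(\bm A,\varphi_0,\varphi_1)\longmapsto \left(x\mapsto \int e^{(\varphi_0(x)+\varphi_1(y)-c_{\bm A}(x,y))/\varepsilon}d\mu_1(y)-1,\ y\mapsto \int e^{(\varphi_0(x)+\varphi_1(y)-c_{\bm A}(x,y))/\varepsilon}d\mu_0(x)-1\right)
\]
valued in $\calC(\supp(\mu_0))\times\calC(\supp(\mu_1))$. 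Since solutions are unique only modulo additive constants, the analysis naturally takes place on an appropriate quotient Banach space. The partial derivative of $F$ in $(\varphi_0,\varphi_1)$ at the solution is the linearized Schr\"odinger operator, and its invertibility on this quotient is the content of the forthcoming Banach space isomorphism lemma alluded to in the surrounding text. IFT then delivers smooth dependence of $(\varphi_0^{\bm A},\varphi_1^{\bm A})$ on $\bm A$, with directional derivatives $(h_0^{\bm A,\bm C},h_1^{\bm A,\bm C})$ characterized by linearizing \eqref{eq:SchrodingerSystem}---which, after absorbing $-\partial_{\bm C}c_{\bm A}(x,y)=32x^{\intercal}\bm C y$ into the system, produces exactly \eqref{eq:potentialDerivativeSystem}.

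\textbf{Second derivative and main obstacle.} Differentiating $D\Phi_{[\bm A]}(\bm B)$ in direction $\bm C$, the chain rule applied to the density $d\pi_{\bm A}/(d\mu_0\otimes\mu_1) = e^{(\varphi_0^{\bm A}\oplus\varphi_1^{\bm A}-c_{\bm A})/\varepsilon}$ produces the factor $\varepsilon^{-1}\bigl(h_0^{\bm A,\bm C}(x)+h_1^{\bm A,\bm C}(y)-32x^{\intercal}\bm C y\bigr)$ multiplying $d\pi_{\bm A}$, giving the claimed expression for $D^2\Phi_{[\bm A]}(\bm B,\bm C)$. Smoothness of $\Phi$ to all orders then follows by iterating the IFT argument. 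The principal obstacle is establishing the Banach space isomorphism: one must isolate the one-dimensional kernel corresponding to additive shifts $(\varphi_0+a,\varphi_1-a)$, choose a well-posed complementary subspace, and verify invertibility there. This is where the technicalities specific to the compactly supported setting enter, and a Fredholm-alternative argument based on compactness of the linearization relative to the identity will be indispensable.
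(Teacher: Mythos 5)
Your proposal follows essentially the same route as the paper: coercivity via Cauchy–Schwarz plus nonnegativity of the KL term, Fréchet differentiability of the potentials via the implicit function theorem applied to the Schrödinger system with the linearization handled by compactness and the Fredholm alternative (this is precisely the paper's Lemmas 3–5, with the paper using a normalized subspace $\mathfrak E$ in place of your quotient), and the second derivative by the chain rule applied to the density $d\pi_{\bm A}/d\mu_0\otimes\mu_1$. The only cosmetic difference is that you obtain the first derivative by a Danskin/envelope argument on the dual, whereas the paper computes it by differentiating $\OT_{\bm A,\varepsilon}=\int\varphi_0^{\bm A}d\mu_0+\int\varphi_1^{\bm A}d\mu_1$ and simplifying via the linearized Schrödinger identity; once the potentials are known to be Fréchet differentiable, these two computations are equivalent, so your shortcut is justified but not a genuinely different path.
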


\cref{prop:PhiDerivative} essentially follows from the implicit mapping theorem, which enables us to compute the Fr{\'e}chet derivative of the EOT potentials for $\OT_{(\cdot),\varepsilon}(\mu_0,\mu_1)$ using the Schr{\"o}dinger system \eqref{eq:SchrodingerSystem}. The derivative of $\OT_{(\cdot),\varepsilon}(\mu_0,\mu_1)$, which is a simple function of the EOT potentials, is then readily obtained. By differentiating the Frobenius norm, this further yields the derivative of $\Phi$. See \cref{sec:PhiDerivativeProof} for details. 

\medskip
The following remark clarifies that the first and second Fr{\'e}chet derivatives of $\Phi$ can be identified with the gradient and Hessian of a function on $\mathbb R^{d_0d_1}$. Recall that the Frobenius inner product, $\langle \bm A,\bm B\rangle_F=\Tr\left(\bm A^{\intercal}\bm B\right)=\sum_{\substack{1\leq i\leq d_0\\1\leq j\leq d_1}} \bm A_{ij}\bm B_{ij}$, is simply the Euclidean inner product between 
the vectorized matrices $\bm A,\bm B\in\mathbb R^{d_0\times d_1}$. 

\begin{remark}[Interpreting derivatives as gradient/Hessian]
\label{rmk:GradientPhi}
   The first derivative of $\Phi$ from \cref{prop:PhiDerivative} can be written as 
   \[
D\Phi_{[\bm A]}(\bm B)=
   \left\langle64\bm A-32\int xy^{\intercal}\,d\pi_{\bm A}(x,y),\bm B\right\rangle_F.
   \] 
Recall that if $f$ is a continuously differentiable function $f$ on $\mathbb R^d$, its directional derivative at $x$ along the direction $y$ is $D f_{[x]}(y)=\langle\nabla f(x),y\rangle$, for $x,y\in\mathbb R^d$. 
By analogy, we may think of $D\Phi_{[\bm A]}$ as $64 \bm A-32\int xy^{\intercal}\,d\pi_{\bm A}(x,y)$. This perspective is utilized in Section \ref{sec:Computation} when studying computational guarantees for the EGW distance, as it is simpler to view iterates as matrices rather than abstract linear operators. By the same token, the second derivative of $\Phi$ at $\bm A\in\RR^{d_0\times d_1}$ is a bilinear form on $\mathbb R^{d_0\times d_1}$ and hence can be identified with a $d_0d_1\times d_0d_1$ matrix by analogy with the Hessian. %
    
\end{remark}

As a direct corollary to \cref{prop:PhiDerivative}, we provide an (implicit) characterization of the stationary points of $\Phi$ and connect its minimizers to solutions of $\sS_{\varepsilon}$. Details are provided in \cref{subsec:proofCorStationary}.

\begin{corollary}[Stationary points and correspondence between $\sS_{\varepsilon}$ and $\sS^{2}_{\varepsilon}$]
    \label{cor:StationaryPoint}
    \quad 
    \begin{enumerate}
    \item[(i)] A matrix $\bm A\mspace{-3mu}\in\mspace{-2mu}\RR^{d_0\mspace{-1mu}\times\mspace{-1mu} d_1}$ is a stationary point of $\Phi$ if and only if 
   $
        \bm A=\frac{1}{2} \int xy^{\intercal}\,d\pi_{\bm A}(x,y).
    $
     As $\Phi$ is coercive, all minimizers of $\Phi$ are stationary points and hence contained in $\mathcal D_{M_{\mu_0,\mu_1}}$.  
    \item[(ii)] If $\mu_0$ and $\mu_1$ are centered, then a given matrix $\bm A$ minimizes $\Phi$ if and only if $\pi_{\bm A}$ is optimal  for $\sS_{\varepsilon}$ and satisfies $\frac{1}{2}\int xy^\intercal \,d\pi_{\bm{A}} (x,y) = \bm{A}$.
    \item[(iii)] Suppose  $\mu_0$ and $\mu_1$ are centered. If $\sS_{\varepsilon}$ admits a unique optimal coupling $\pi_{\star}$, then $\Phi$ admits a unique minimizer $\bm{A}^\star$ and $\pi_{\star} = \pi_{\bm{A}^{\star}}$. Conversely, if $\Phi$ admits a unique minimizer $\bm{A}^\star$, then $\pi_{\bm{A}^\star}$ is a unique optimal coupling for $\sS_{\varepsilon}$. 
\end{enumerate}
\end{corollary}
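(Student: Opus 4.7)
The strategy is to derive all three claims as direct consequences of \cref{prop:PhiDerivative} combined with the elementary quadratic variational identity that underlies \cref{lem:EGWDual}.

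For part (i), the gradient expression from \cref{rmk:GradientPhi} shows immediately that $\bm A$ is stationary iff $64\bm A = 32\int xy^{\intercal}\,d\pi_{\bm A}(x,y)$, which is exactly the asserted fixed-point relation. Coercivity of $\Phi$ is verified by bounding the cost from below on compact supports: with $R$ a common bound on $\|x\|$ and $\|y\|$ over $\supp(\mu_0)\times\supp(\mu_1)$, Cauchy--Schwarz gives $c_{\bm A}(x,y)\geq -4R^4-32R^2\|\bm A\|_F$, so $\OT_{\bm A,\varepsilon}(\mu_0,\mu_1)\geq \inf c_{\bm A}\geq -4R^4-32R^2\|\bm A\|_F$ by nonnegativity of the KL term; the quadratic $32\|\bm A\|_F^2$ then dominates as $\|\bm A\|_F\to\infty$. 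Smoothness plus coercivity yield the existence of minimizers, each of which is necessarily stationary. Finally, for any stationary $\bm A$, applying the Bochner form of Jensen's inequality followed by Cauchy--Schwarz to the fixed-point identity gives $\|\bm A\|_F\leq\tfrac12\int\|x\|\|y\|\,d\pi_{\bm A}\leq\tfrac12\sqrt{M_2(\mu_0)M_2(\mu_1)}=M_{\mu_0,\mu_1}/2$, placing $\bm A$ in $\cD_{M_{\mu_0,\mu_1}}$.

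For part (ii), the key tool is the elementary identity $-a^2=\min_{b\in\RR}(b^2-2ab)$, uniquely attained at $b=a$. Applying it entrywise to $a_{ij}=\int x_iy_j\,d\pi$ under the substitution $b_{ij}=2A_{ij}$ yields
\[
-8\sum_{i,j}\p{\int x_iy_j\,d\pi}^{\!2}=\inf_{\bm A\in\RR^{d_0\times d_1}}\cp{32\|\bm A\|_F^2-32\int x^{\intercal}\bm Ay\,d\pi},
\]
with the unique minimizer $\bm A=\tfrac12\int xy^{\intercal}\,d\pi$. Substituting this identity into the definition of $\sS^{2}_{\varepsilon}$ and swapping the order of infima reproduces $\sS^{2}_{\varepsilon}=\inf_{\bm A}\Phi(\bm A)$; more importantly, it shows that a pair $(\bm A,\pi)$ jointly minimizes the double infimum iff $\pi=\pi_{\bm A}$ (by uniqueness of EOT couplings) \emph{and} $\bm A=\tfrac12\int xy^{\intercal}\,d\pi_{\bm A}$. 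Combined with the centered decomposition $\sS_{\varepsilon}=\sS^{1}+\sS^{2}_{\varepsilon}$ from \eqref{eq:EGWDecomp}, this translates into the stated equivalence: $\bm A$ minimizes $\Phi$ iff $\pi_{\bm A}$ is optimal for $\sS_{\varepsilon}$ and the fixed-point equation holds.

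Part (iii) follows directly from the bijection between minimizers of $\Phi$ and optimal couplings of $\sS_{\varepsilon}$ extracted in (ii). If $\sS_{\varepsilon}$ admits a unique optimizer $\pi^{\star}$, then any minimizer $\bm A^{\star}$ of $\Phi$ must satisfy $\pi_{\bm A^{\star}}=\pi^{\star}$ and $\bm A^{\star}=\tfrac12\int xy^{\intercal}\,d\pi^{\star}$, which pins $\bm A^{\star}$ down. Conversely, if $\Phi$ has a unique minimizer $\bm A^{\star}$, then every optimizer $\pi$ of $\sS_{\varepsilon}$ yields a minimizer $\tfrac12\int xy^{\intercal}\,d\pi$ of $\Phi$, forcing it to coincide with $\bm A^{\star}$ and hence $\pi=\pi_{\bm A^{\star}}$ by uniqueness of EOT couplings. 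The only genuinely substantive step is justifying the interchange of infima and extracting the joint optimality conditions in (ii); coercivity, stationarity, and the uniqueness transfer are routine consequences of \cref{prop:PhiDerivative}.
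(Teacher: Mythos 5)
Your proof of parts (i) and (iii) follows essentially the same line as the paper: the fixed-point characterization from the gradient, coercivity, Jensen plus Cauchy--Schwarz for the containment in $\cD_{M_{\mu_0,\mu_1}}$, and the uniqueness transfer via uniqueness of the EOT coupling. For part (ii), however, you take a genuinely different route. The paper establishes the forward implication by invoking the result from \citet{Zhang22Gromov} (that an optimal $\pi_\star$ for $\sS_\varepsilon$ yields the minimizer $\frac12\int xy^\intercal d\pi_\star$ of $\Phi$) and establishes the converse by substituting the stationary-point identity $\bm A=\frac12\int xy^\intercal\,d\pi_{\bm A}$ into the formula for $\sS^2_\varepsilon$ and directly expanding the distance-distortion cost $|\|x-x'\|^2-\|y-y'\|^2|^2$ to verify algebraically that $\pi_{\bm A}$ attains $\mathsf S_\varepsilon$. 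You instead rederive the variational representation from the scalar identity $-a^2=\min_b(b^2-2ab)$ applied entrywise, write $\sS^2_\varepsilon$ as a double infimum over $(\pi,\bm A)$, and extract the joint optimality conditions by swapping the order of minimization. Your route is cleaner and more self-contained (it makes the ``iff'' structure of the claim transparent and avoids relying on intermediate facts from Zhang et al.), while the paper's direct computation has the advantage of not requiring any discussion of existence of joint minimizers. That is the one point you should make explicit: to pass from the double-infimum identity to the iff statement you need the joint infimum to be attained, which in the present setting follows from coercivity of $\Phi$ (part (i)) together with weak compactness of $\Pi(\mu_0,\mu_1)$ and lower semicontinuity of the EOT objective. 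With that remark added, your argument is complete.
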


Although the second derivative of $\Phi$ involves the implicitly defined functions $(h_0^{\bm A,\bm C},h_1^{\bm A,\bm C})$, its maximal and minimal eigenvalues admit the following explicit bounds. 

\begin{corollary}[Hessian eigenvalue bounds]\label{cor:HessianMinimalEigenvalue} 
    The following hold for any $\bm A\in\RR^{d_0\times d_1}$:
    \begin{enumerate}
        \item[(i)] The minimal eigenvalue of $D^2\Phi_{[\bm A]}$ satisfies %
    \begin{align*}
        \lambda_{\min}\left(D^2\Phi_{[\bm A]}\right)&=64+\varepsilon^{-1}\inf_{\|\bm C\|_F=1}\int\bigg[ \left(h_0^{\bm A,\bm C}(x)+h_1^{\bm A,\bm C}(y)\right)^2-32^2(x^{\intercal}\bm C y)^2\bigg]d\pi_{\bm A}(x,y)\\
        &\geq 64-32^2\varepsilon^{-1}\sup_{\|\bm C\|_F=1}\Var_{\pi_{\bm A}}(X^{\intercal}\bm C Y), \label{eq:MinimalEigenvalue}
    \end{align*} 
    where the variance term admits the uniform bound 
   $\sup\limits_{\|\bm C\|_F=1}\mspace{-6mu}\Var_{\pi_{\bm A}}(X^{\intercal}\bm C Y)\mspace{-3mu}\leq\mspace{-3mu} \sqrt{M_4(\mu_0)M_4(\mu_1)}$.
   \vspace{1mm}
   \item[(ii)]  The maximal eigenvalue of $D^2\Phi_{[\bm A]}$ satisfies $\lambda_{\max}\left(D^2\Phi_{[\bm A]}\right)\leq 64$.
    \end{enumerate}
\end{corollary}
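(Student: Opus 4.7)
The plan is to apply the second-derivative formula from \cref{prop:PhiDerivative} evaluated at $\bm B = \bm C$ with $\|\bm C\|_F = 1$, exploiting the structure of the defining system \eqref{eq:potentialDerivativeSystem} to produce the claimed identities and bounds. Since $\Phi$ is twice continuously differentiable, Schwarz's theorem makes the bilinear form $D^2 \Phi_{[\bm A]}$ symmetric, so the Rayleigh-quotient characterization $\lambda_{\min}(D^2 \Phi_{[\bm A]}) = \inf_{\|\bm C\|_F = 1} D^2 \Phi_{[\bm A]}(\bm C, \bm C)$ applies, and analogously for $\lambda_{\max}$.

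The crucial first step is to observe from \eqref{eq:potentialDerivativeSystem} combined with the density formula \eqref{eqn:EOTcoupling} that $\bar{h}(x,y) \coloneqq h_0^{\bm A, \bm C}(x) + h_1^{\bm A, \bm C}(y) - 32\, x^\intercal \bm C y$ satisfies $\EE_{\pi_{\bm A}}[\bar{h}(X,Y) \mid X] = 0$ and $\EE_{\pi_{\bm A}}[\bar{h}(X,Y) \mid Y] = 0$, $\pi_{\bm A}$-almost surely. Hence $\bar{h}$ is orthogonal in $L^2(\pi_{\bm A})$ to any function of $X$ alone or $Y$ alone, and in particular $\EE_{\pi_{\bm A}}[\bar h] = 0$. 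Substituting $32\, x^\intercal \bm C y = h_0^{\bm A,\bm C} + h_1^{\bm A,\bm C} - \bar h$ into the cross-integral $32 \int x^\intercal \bm C y\, \bar h\, d\pi_{\bm A}$ and invoking this orthogonality collapses it to $-\int \bar h^2 d\pi_{\bm A}$, so the second-derivative formula rewrites as $D^2 \Phi_{[\bm A]}(\bm C, \bm C) = 64 - \varepsilon^{-1} \int \bar h^2\, d\pi_{\bm A}$. Expanding $\bar h^2 = (h_0 + h_1)^2 - 64 (h_0 + h_1)\, x^\intercal \bm C y + 32^2 (x^\intercal \bm C y)^2$ and reapplying the orthogonality trick to the middle cross term yields $\int \bar h^2 d\pi_{\bm A} = -\int (h_0 + h_1)^2 d\pi_{\bm A} + 32^2 \int (x^\intercal \bm C y)^2 d\pi_{\bm A}$, which upon re-insertion gives the equality asserted in (i) after taking the infimum over unit-norm $\bm C$.

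For the lower bound in (i), Jensen's inequality applied to $t \mapsto t^2$ yields $\int (h_0^{\bm A,\bm C} + h_1^{\bm A,\bm C})^2 d\pi_{\bm A} \geq \bigl(\EE_{\pi_{\bm A}}[h_0^{\bm A,\bm C} + h_1^{\bm A,\bm C}]\bigr)^2 = 32^2 \bigl(\EE_{\pi_{\bm A}}[X^\intercal \bm C Y]\bigr)^2$, the latter equality coming from $\EE_{\pi_{\bm A}}[\bar h] = 0$. Combining with the identity $\EE_{\pi_{\bm A}}[(X^\intercal \bm C Y)^2] = \Var_{\pi_{\bm A}}(X^\intercal \bm C Y) + (\EE_{\pi_{\bm A}}[X^\intercal \bm C Y])^2$ extracts $-32^2 \Var_{\pi_{\bm A}}(X^\intercal \bm C Y)$ as a lower bound on the integrand inside the infimum, proving the stated inequality. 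The uniform moment bound follows from $|x^\intercal \bm C y| \leq \|\bm C\|_{\op} \|x\| \|y\| \leq \|\bm C\|_F \|x\| \|y\|$ together with Cauchy-Schwarz and the fact that $\pi_{\bm A}$ has marginals $\mu_0, \mu_1$, giving $\Var_{\pi_{\bm A}}(X^\intercal \bm C Y) \leq \EE_{\pi_{\bm A}}[\|X\|^2 \|Y\|^2] \leq \sqrt{M_4(\mu_0) M_4(\mu_1)}$.

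Finally, part (ii) is immediate from the representation $D^2 \Phi_{[\bm A]}(\bm C, \bm C) = 64 - \varepsilon^{-1} \int \bar h^2 d\pi_{\bm A}$ and nonnegativity of the integral, taking supremum over unit-norm $\bm C$. The main obstacle is establishing the zero-conditional-mean property of $\bar h$ cleanly from \eqref{eq:potentialDerivativeSystem}; once this is in hand, all remaining steps reduce to algebraic rearrangement followed by Jensen and Cauchy-Schwarz.
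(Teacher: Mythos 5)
Your proof is correct, and while it lands on the same identities, it organizes the argument differently enough to be worth comparing. The paper never introduces $\bar h$ as an object: it multiplies the two equations of \eqref{eq:potentialDerivativeSystem} by $h_0^{\bm A,\bm C}$ and $h_1^{\bm A,\bm C}$ respectively, integrates against $\mu_0$ and $\mu_1$, and sums to obtain the single identity $32\int x^\intercal \bm C y\,(h_0+h_1)\,d\pi_{\bm A}=\int (h_0+h_1)^2 d\pi_{\bm A}$; the equality in (i) then drops out by subtracting $32^2\int (x^\intercal\bm C y)^2 d\pi_{\bm A}$ from both sides. Your reformulation recognizes \eqref{eq:potentialDerivativeSystem} as the conditional-mean-zero statements $\EE_{\pi_{\bm A}}[\bar h\mid X]=\EE_{\pi_{\bm A}}[\bar h\mid Y]=0$, from which $\bar h\perp h_0(X)$ and $\bar h\perp h_1(Y)$ in $L^2(\pi_{\bm A})$; this gives the compact representation $D^2\Phi_{[\bm A]}(\bm C,\bm C)=64-\varepsilon^{-1}\int\bar h^2\,d\pi_{\bm A}$, and the stated equality follows by the Pythagorean decomposition of $32\,x^\intercal\bm C y=(h_0+h_1)-\bar h$ into orthogonal pieces. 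These are the same underlying computations, just regrouped. Where your route genuinely diverges is Item (ii): the paper argues that $\OT_{(\cdot),\varepsilon}(\mu_0,\mu_1)$ is concave as an infimum of affine functions of $\bm A$, hence its Hessian is negative semidefinite; you instead read $\lambda_{\max}\le 64$ directly off the representation $64-\varepsilon^{-1}\int\bar h^2\,d\pi_{\bm A}$ via nonnegativity of the squared integral. Your version has the advantage that both items are corollaries of one formula; the paper's concavity argument is more robust in that it does not rely on the differentiability apparatus of Proposition~\ref{prop:PhiDerivative} at all and would survive if the potentials' derivatives $(h_0^{\bm A,\bm C},h_1^{\bm A,\bm C})$ were unavailable. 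The Jensen step and the Cauchy--Schwarz bound $\sup_{\|\bm C\|_F=1}\Var_{\pi_{\bm A}}(X^\intercal\bm C Y)\le\sqrt{M_4(\mu_0)M_4(\mu_1)}$ match the paper verbatim.
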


\cref{cor:HessianMinimalEigenvalue} follows from \cref{prop:PhiDerivative} by considering the variational form of the maximal and minimal eigenvalues; see \cref{subsec:ProofCorMinEig} for details. 
We note that, in general, the variance bound in Item (i) is sharp up to constants in arbitrary dimensions. For example, it is attained up to a factor of 2 by $\mu_0=\frac{1}{2}\left(\delta_{0}+\delta_{a}\right)$ and $\mu_1=\frac{1}{2}\left(\delta_{0}+\delta_{b}\right)$ for $a\in\RR^{d_0}$ and $b\in\RR^{d_1}$; see \cref{app:VarianceBound} for the full variance computation.%

\medskip
Armed with the eigenvalue bounds, we now state the main result of this section addressing convexity and smoothness of $\Phi$. %

\begin{theorem}[Convexity and $L$-smoothness]\label{thm:ConvexitySmoothness} 
        The map $\Phi$ is weakly convex with parameter at most $32^2\varepsilon^{-1}\sqrt{M_4(\mu_0)M_4(\mu_1)}-64$ and, if $\sqrt{M_4(\mu_0)M_4(\mu_1)}<\frac{\varepsilon}{16}$, then it is strictly convex and admits a unique minimizer. Moreover,  for any $M>0$, $\Phi$ is $L$-smooth on $\mathcal D_M$ with
    \[
    L= \max_{\bm A\in\cD_M}\lambda_{\max}\left( D^2\Phi_{[\bm A]}\right)\vee \big(-\lambda_{\min}\left(D^2\Phi_{[\bm A]}\right)\big)\leq 64\vee \left(32^2\varepsilon^{-1}\sqrt{M_4(\mu_0)M_4(\mu_1)}-64\right).
    \] 
\end{theorem}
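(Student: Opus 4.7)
The plan is to derive every claim from the uniform spectral bounds for $D^2\Phi$ established in \cref{cor:HessianMinimalEigenvalue}, which constitute the substantive input. The remaining argument is standard convex analysis applied to the Hessian, which I would identify with a symmetric $d_0 d_1 \times d_0 d_1$ matrix as in \cref{rmk:GradientPhi}. First I would record the two pointwise bounds that are immediate from \cref{cor:HessianMinimalEigenvalue}: for every $\bm A \in \RR^{d_0 \times d_1}$,
\[
\lambda_{\min}\!\left(D^2\Phi_{[\bm A]}\right) \geq 64 - 32^2\varepsilon^{-1}\sqrt{M_4(\mu_0)M_4(\mu_1)}, \qquad \lambda_{\max}\!\left(D^2\Phi_{[\bm A]}\right) \leq 64.
\]

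Next, set $\rho := 32^2\varepsilon^{-1}\sqrt{M_4(\mu_0)M_4(\mu_1)} - 64$ and $\rho_+ := \rho \vee 0$. To obtain $\rho_+$-weak convexity, I would consider $\Psi := \Phi + \tfrac{\rho_+}{2}\|\cdot\|_F^2$; since the squared Frobenius norm has Hessian $2I$, $D^2\Psi_{[\bm A]} = D^2\Phi_{[\bm A]} + \rho_+ I$ has nonnegative smallest eigenvalue everywhere. The standard second-derivative characterization of convexity on $\RR^{d_0 d_1}$ then yields convexity of $\Psi$, hence the stated weak convexity bound for $\Phi$ (with the convention that a nonpositive value of $\rho$ is redundant since $\Phi$ is already convex in that regime). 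When $\sqrt{M_4(\mu_0)M_4(\mu_1)} < \varepsilon/16$, the lower eigenvalue bound becomes strictly positive uniformly in $\bm A$, so $\Phi$ is strongly convex, and in particular strictly convex; combining strict convexity with the coercivity and continuity of $\Phi$ from \cref{prop:PhiDerivative} gives existence and uniqueness of the minimizer.

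For $L$-smoothness on $\mathcal D_M$, I would use that the operator norm of the Hessian satisfies $\|D^2\Phi_{[\bm A]}\|_{\op} = \lambda_{\max}(D^2\Phi_{[\bm A]}) \vee (-\lambda_{\min}(D^2\Phi_{[\bm A]}))$, which by the bounds above is uniformly dominated by $64 \vee (32^2\varepsilon^{-1}\sqrt{M_4(\mu_0)M_4(\mu_1)} - 64)$. Since $\mathcal D_M$ is convex, the mean value inequality $\nabla \Phi(\bm A) - \nabla \Phi(\bm B) = \int_0^1 D^2\Phi_{[(1-t)\bm B + t\bm A]}(\bm A - \bm B)\, dt$, integrated along the segment between $\bm A, \bm B \in \mathcal D_M$, yields the Lipschitz estimate on $\nabla \Phi$ with exactly the stated constant.

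There is no serious technical obstacle, since the heavy lifting has already been absorbed into \cref{prop:PhiDerivative} and \cref{cor:HessianMinimalEigenvalue}. The only care point is that $D^2\Phi_{[\bm A]}$ arises natively as a bilinear form on $\RR^{d_0 \times d_1}$; however, the symmetry $D^2\Phi_{[\bm A]}(\bm B, \bm C) = D^2\Phi_{[\bm A]}(\bm C, \bm B)$ (visible from the expression in \cref{prop:PhiDerivative} together with the symmetry of \eqref{eq:potentialDerivativeSystem} in $\bm B$ and $\bm C$) permits its identification with a symmetric matrix on $\RR^{d_0 d_1}$, so the standard second-derivative convexity criterion and spectral-norm bounds apply without modification.
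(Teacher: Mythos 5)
Your proof is correct and follows essentially the same route as the paper: both derive the weak-convexity parameter from the Hessian eigenvalue bounds of \cref{cor:HessianMinimalEigenvalue} by considering the shifted function $\Phi + \tfrac{\rho}{2}\|\cdot\|_F^2$, and both obtain $L$-smoothness from those same bounds via the mean value inequality along segments in the convex set $\cD_M$. The minor differences (your use of $\rho_+ = \rho \vee 0$, your appeal to the integral form of the mean value theorem rather than the sup version, and your explicit remark on symmetry of the second Fr\'echet derivative to justify the matrix identification) are presentational and do not change the argument.
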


\edit{ \cref{thm:ConvexitySmoothness} shows that $\Phi$ is amenable to optimization by accelerated gradient methods with step size $L$ and establishes sufficient conditions to guarantee convergence of these algorithms to a global minimizer (i.e.,  convexity of $\Phi$). %
}
In general, optimal EGW couplings may not be unique.
\cref{thm:ConvexitySmoothness} provides sufficient conditions for uniqueness of solutions to both \eqref{eq:S2Decomp} and the EGW problem by the connection discussed in \cref{cor:StationaryPoint} when the marginals are centered.
When the optimal EGW coupling is unique, symmetries in the marginal spaces result in certain structural properties for the optimal $\bm A^\star$ in \eqref{eq:S2Decomp}. The following remark expands on these connections.
\begin{remark}[Symmetries and uniqueness of couplings]
Fix $\varepsilon>0$ and a pair of centered distributions $(\mu_0,\mu_1)\in\mathcal P_{4}(\mathbb R^{d_0})\times\mathcal P_{4}(\mathbb R^{d_1})$. Assume that $\Phi$ admits a unique minimizer $\bm A^{\star}$ and let $\pi_{\bm A^{\star}}$ be the associated EOT/EGW coupling (e.g., under the conditions of \cref{thm:ConvexitySmoothness}, given that $\mu_0,\mu_1$ are compactly supported). If, for $i=0,1$, $\mu_i$ is invariant under the action of the orthogonal transformation $U_i:\mathbb R^{d_i}\to \mathbb R^{d_i}$ in the sense that $(U_i)_{\sharp}\mu_i=\mu_i$ it follows that $(U_0,U_1)_{\sharp}\pi_{\bm A^{\star}}$ is also an optimal EGW coupling, whence $(U_0,U_1)_{\sharp}\pi_{\bm A^{\star}}=\pi_{\bm A^{\star}}$ by uniqueness. Thus, by \cref{cor:StationaryPoint}, $\bm U_0^{\intercal}\bm A^{\star}\bm U_1=\bm A^{\star}$ where $\bm U_i$ is the matrix associated with $U_i$, for $i=0,1$. The previous equality holds for any pair of orthogonal transformations leaving the marginals invariant, so the rows of $\bm A^{\star}$ are left eigenvectors of $\bm U_1$ with eigenvalue $1$ and its columns are right eigenvectors of $\bm U_0^{\intercal}$ with eigenvalue $1$ for every $\bm U_i$ such that $(U_i)_{\sharp}\mu_i=\mu_i$. Thus, we see that symmetries of the marginals dictate the structure of $\bm A^{\star}$. For example, if $\mu_1=(-\Id)_{\sharp}\mu_1$, we have that $\bm A^{\star}=-\bm A^{\star}$, so $\bm A^{\star}=\bm 0$.   
\end{remark}

\section{Computational Guarantees}
\label{sec:Computation}

Building on the stability theory from \cref{sec:asymptoticNormalityCompact}, we now study computation of the EGW problem. %
The goal is to compute the distance between two discrete distributions $\mu_0\in\cP(\RR^{d_0})$ and $\mu_1\in\cP(\RR^{d_1})$ supported on $N_0$ and $N_1$ atoms $(x^{(i)})_{i=1}^{N_0}$ and $(y^{(j)})_{j=1}^{N_1}$, respectively. In light of the decomposition \eqref{eq:EGWDecomp}, we focus on $\sS^{2}_{\varepsilon}$, which is given by a smooth optimization problem whose convexity depends on the value of $\varepsilon$ (cf. 
\cref{subsec:QuadraticDerivative}). %
Throughout, we adopt the perspective of \cref{rmk:GradientPhi} and treat  $D\Phi_{[\bm A]}$, for $\bm A\in\mathbb R^{d_0\times d_1}$, as the matrix $64 \bm A-32\int xy^{\intercal}\,d\pi_{\bm A}(x,y)$.

\subsection{Inexact Oracle Methods}\label{subsec:InexactOracle}
As these problems are already $d_0d_1$-dimensional and computing the second Fr{\'e}chet derivative of $\Phi$ may be infeasible (in particular, it requires solving \eqref{eq:potentialDerivativeSystem}), we focus on first-order methods. Given the regularity of the $\sS^{2}_{\varepsilon}$ optimization problem, standard out-of-the-box numerical routines are likely to yield good results in practice. %
However, to provide meaningful formal guarantees one must account for the fact that evaluation of $\Phi$ and its gradient, for $\bm A\in\cD_M$, %
requires computing the corresponding EOT plan, which often entails an approximation. Indeed, an explicit characterizations of the EOT plan between arbitrary distributions is unknown and algorithms typically rely on a fast numerical proxy of the coupling. We model this under the scope of gradient methods with inexact gradient oracles \citep{Aspremont2008smooth,devolder2014first,dvurechensky2017gradient}.  %

\medskip
For a fixed $\varepsilon>0$ and $\mu_0,\mu_1$ as above, our goal is thus to solve
\[
\min_{\bm A \in \cD_M} 32\|\bm A\|^2_{F}+\OT_{\bm A,\varepsilon}(\mu_0,\mu_1),
\] 
where $M>M_{\mu_0,\mu_1}$, which guarantees that all the optimizers are within the optimization domain (cf. \cref{cor:StationaryPoint}). As we are in the discrete setting, the EOT coupling $\pi^{\bm A}$ for $\OT_{\bm A,\varepsilon}(\mu_0,\mu_1)$, $\bm A\in\cD_M$, is represented by $\bm \Pi^{\bm A}\in\RR^{N_0\times N_1}$, where $ {\bm \Pi}_{ij}^{\bm A}=\pi^{\bm A}(x^{(i)},y^{(j)})$. %
The inexact oracle paradigm assumes that, for any $\bm A\in\cD_M$, we have access to a $\delta$-oracle approximation $\widetilde{\bm \Pi}^{\bm A}$ of ${\bm \Pi}^{\bm A}$ with $\|\widetilde {\bm \Pi}^{\bm A}-{\bm \Pi}^{\bm A}\|_{\infty}<\delta$. Such oracles can be obtained, for instance, by numerical resolution of the EOT problem. To this end, Sinkhorn's algorithm \citep{sinkhorn1967diagonal,cuturi2013sinkhorn} serves as the canonical approach.  

\begin{proposition}[Inexact oracle via Sinkhorn iterations]
\label{rmk:InexactSinkhorn}
Fix $\delta>0$. Then,
Sinkhorn's algorithm (\cref{alg:SinkhornAlg}) returns an $(e^{\delta}-1)$-oracle approximation $\widetilde{\bm \Pi}^{\bm A}$ of $\bm \Pi^{\bm A}$ in at most $\tilde k$ iterations, where $\tilde k$ depends only on $\mu_0,\mu_1,\bm A,\delta,$ and $\varepsilon$, and is given explicitly in  \eqref{eq:maxIters}. 
\end{proposition}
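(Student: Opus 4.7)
The plan is to invoke the classical geometric convergence of Sinkhorn's algorithm in Hilbert's projective metric and to translate it into the stated entrywise error bound on the coupling matrix.

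First, I would write the $k$-th Sinkhorn iterate as $\widetilde{\bm\Pi}^{(k)}_{ij} = u^{(k)}_i K_{ij} v^{(k)}_j$, where $K_{ij} = \exp(-c_{\bm A}(x^{(i)},y^{(j)})/\varepsilon)$ is the Gibbs kernel and $(u^{(k)},v^{(k)})$ are the scaling vectors produced by alternately enforcing the two marginal constraints. The exact EOT coupling admits the same factorization $\bm\Pi^{\bm A}_{ij} = u^\star_i K_{ij} v^\star_j$ for some $(u^\star,v^\star)$, which is unique up to the rescaling $(u,v)\mapsto(\alpha u, v/\alpha)$ encoding the additive-constant ambiguity of the EOT potentials from \eqref{eq:SchrodingerSystem}.

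Next, I would set up the entrywise reduction
\[
\bigl|\widetilde{\bm\Pi}^{(k)}_{ij} - \bm\Pi^{\bm A}_{ij}\bigr| \;=\; \bm\Pi^{\bm A}_{ij}\,\bigl|u^{(k)}_i v^{(k)}_j/(u^\star_i v^\star_j) - 1\bigr|.
\]
After a suitable common normalization (fixing the scaling ambiguity), if $\|\log(u^{(k)}/u^\star)\|_\infty + \|\log(v^{(k)}/v^\star)\|_\infty \le \delta$, then each multiplicative ratio on the right lies in $[e^{-\delta},e^\delta]$, so the factor in absolute value is at most $e^\delta-1$. Since $\bm\Pi^{\bm A}_{ij}\in[0,1]$ (its entries sum to one), this yields $\|\widetilde{\bm\Pi}^{(k)} - \bm\Pi^{\bm A}\|_\infty \le e^\delta - 1$, the announced $(e^\delta - 1)$-oracle property.

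It then remains to bound the number of iterations required to force the log-scalings within $\delta$ of the optimum. By the Birkhoff--Hopf theorem, the Sinkhorn map is a strict contraction on the positive cone in the Hilbert projective metric with coefficient $\lambda = \tanh(\Delta/4)^2 \in (0,1)$, where $\Delta = \log\bigl(\max_{i,j,i',j'} K_{ij}K_{i'j'}/(K_{ij'}K_{i'j})\bigr)$ is the projective diameter of $K$. Compact support of $\mu_0,\mu_1$ and the explicit form $c_{\bm A}(x,y) = -4\|x\|^2\|y\|^2 - 32\,x^\intercal\bm A y$ yield a finite, explicit bound $\Delta \le 2\varepsilon^{-1}\sup\{|c_{\bm A}(x,y)|:(x,y)\in\supp(\mu_0)\times\supp(\mu_1)\}$ in terms of the support radii of $\mu_0,\mu_1$ and $\|\bm A\|_F$. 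Standard comparisons convert projective contraction into a sup-norm bound on centered log-scalings, and inverting $\lambda^k C_0 \le \delta$ for an explicit initial projective distance $C_0$ returns the iteration count $\tilde k$ in \eqref{eq:maxIters}, depending only on $\mu_0,\mu_1,\bm A,\varepsilon$, and $\delta$.

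The principal obstacle is the careful bookkeeping required to move from the scaling-invariant Hilbert projective metric into a genuine sup-norm statement on the entries of the coupling, which forces one to fix the free multiplicative ambiguity in $(u^{(k)},v^{(k)})$ (equivalently, to center the Sinkhorn potentials). An alternative that sidesteps the Birkhoff--Hopf machinery is to track the marginal violation $\|\widetilde{\bm\Pi}^{(k)}\bm{1} - \bm p\|_1 + \|(\widetilde{\bm\Pi}^{(k)})^\intercal\bm{1} - \bm q\|_1$, stop when it falls below a $\delta$-dependent threshold, and bound $\|\widetilde{\bm\Pi}^{(k)} - \bm\Pi^{\bm A}\|_\infty$ using the strong convexity of the Sinkhorn objective in the log-domain; both routes produce an iteration count with the claimed parameter dependence.
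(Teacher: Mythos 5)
Your proposal follows essentially the same route as the paper's Appendix B: factor $\widetilde{\bm\Pi}^{(k)}$ and $\bm\Pi^{\bm A}$ through the Gibbs kernel, control the ratio of entries via Hilbert's projective metric on the scaling vectors (the paper cites Lemma 3 of Franklin--Lorenz for this step, which is the precise version of your ``common normalization'' argument), use Birkhoff's contraction theorem to bound the iteration count, and also track the marginal-violation stopping criterion via Dvurechensky et al. The only structural difference is that the paper uses the marginal-violation bound not as an alternative but in combination with the projective-metric bound, taking the minimum of the two iteration counts in \eqref{eq:maxIters}; also note your bound $\Delta\le 2\varepsilon^{-1}\sup|c_{\bm A}|$ should be $\Delta\le 2\varepsilon^{-1}\operatorname{osc}(c_{\bm A})\le 4\varepsilon^{-1}\sup|c_{\bm A}|$, though this does not affect the argument.
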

The proof of \cref{rmk:InexactSinkhorn} follows by combining a number of known results. Complete details can be found in \cref{app:SinkhornInexactAlg}. \edit{To our knowledge, the majority of the literature concerning the use of Sinkhorn's algorithm for EOT focuses on approximating unregularized OT and treats the KL divergence term as a bias. Here we quantify the accuracy of estimating the true EOT plan, which may be of an independent interest.}

With these preparations, we first discuss the case where $\Phi$ is known to be convex on $\cD_M$. 

\subsection{Convex Case}

Assume that $\Phi$ is convex on $\cD_M$, e.g., under the setting of \cref{thm:ConvexitySmoothness}. As convexity implies that the minimal eigenvalue of $D^2\Phi_{[\bm A]}$ is positive for any $\bm A\in \cD_M$, \cref{thm:ConvexitySmoothness} further yields that $\Phi$ is $64$-smooth. %
With that, we can the apply inexact oracle~first-order method from \citet{Aspremont2008smooth}. To describe the approach, assume that we are given a $\delta$-oracle $\widetilde{\bm \Pi}^{\bm A}$ for the EOT plan $\bm \Pi^{\bm A}$ for $\OT_{\bm A,\varepsilon }(\mu_0,\mu_1)$, and define the corresponding gradient approximation
\begin{equation}
\label{eq:appGradient}
\widetilde D\Phi_{[\bm A]}=64\bm A-32\sum_{\substack{1\leq i\leq N_0\\1\leq j\leq N_1}} x^{(i)}(y^{(j)})^{\intercal}\widetilde{\bm \Pi}_{ij}^{\bm A}.
\end{equation}
We now present the algorithm and follow it with formal convergence guarantees.%

\begin{algorithm}
\caption{Fast gradient method with inexact oracle}\label{alg:fgmInexact}
\begin{algorithmic}[1]
\Statex Fix $L= 64$ and let $\alpha_k=\frac{k+1}{2}$, and $\tau_k=\frac{2}{k+3}$
\State $k\gets 0$
\State $\bm A_0\gets\bm 0$
\State $\bm G_0\gets \widetilde D\Phi_{[\bm A_0]}$
\State $\bm W_0\gets \alpha_0\bm G_0$
\While{stopping condition is not met}
\State $\bm B_{k}\gets \min\left(1,\frac{M}{2\|\bm A_k-L^{-1}\bm G_k\|_F}\right)(\bm A_k-L^{-1}\bm G_k)$
\State $\bm C_{k}\gets -
\min\left(1,\frac{M}{2\|L^{-1}\bm W_k\|_F}\right)L^{-1}\bm W_k$
\State $\bm A_{k+1}\gets\tau_k\bm C_k+(1-\tau_k) \bm B_k$
\State $\bm G_{k+1}\gets \widetilde D\Phi_{[\bm A_{k+1}]}$
\State $\bm W_{k+1}\gets \bm W_{k}+\alpha_{k+1}\bm G_{k+1}$
\State $k\gets k+1$
\EndWhile
\State \textbf{return} $\bm B_k$
\end{algorithmic}
\end{algorithm}

The multiplication operations in \cref{alg:fgmInexact}  are applied entrywise and it is understood that $\min(1,{M}/{ 0})=1$. Due to inexactness, stopping conditions based on insufficient progress of functions values or setting a threshold on the norm of the gradient require care. A condition based on the number of iterations is discussed in  \cref{rmk:RatesandStopping}.

\medskip
 We now provide formal convergence guarantees for \cref{alg:fgmInexact}.

\begin{theorem}[Fast convergence rates]%
\label{thm:fgmRates}
    Assume that $\Phi$ is convex and $L$-smooth on $\cD_M$ and that $\widetilde{\bm \Pi}^{\bm A}$ is a $\delta$-oracle for $\bm \Pi^{\bm A}$. Then, the iterates $\bm B_k$ in \cref{alg:fgmInexact} with $\widetilde{D}\Phi_{[\bm A_k]}$ given by \eqref{eq:appGradient}
    satisfy 
\begin{equation}
    \label{eq:fgmRate}
    \Phi(\bm B_k)-\Phi(\bm B^{\star})\leq \frac{2L\|\bm B^{\star}\|_F^2}{(k+1)(k+2)}+3\delta',
\end{equation}
where $\bm B^{\star}$ is a global minimizer of $\Phi$ and $\delta'=32M\delta \sum_{\substack{1\leq i\leq N_0\\1\leq j\leq N_1}}\left\| x^{(i)}\right\|\left\|y^{(j)}\right\|$. Moreover, for any
$\eta>3\delta'$, \cref{alg:fgmInexact} requires at most 
\begin{equation}
\label{eq:maxIterations}
k=\left\lceil -\frac{3}{2}+\frac{1}{2}\sqrt{1+\frac{8L\|\bm B^{\star}\|^2_F}{\eta-3\delta'}} \right\rceil  \leq \left\lceil -\frac{3}{2}+\frac{1}{2}\sqrt{1+\frac{128M^2}{\eta-3\delta'}} \right\rceil
\end{equation}
iterations to achieve an $\eta$-approximate solution. 
\end{theorem}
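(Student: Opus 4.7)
The plan is to cast \cref{alg:fgmInexact} as an instance of the fast gradient method with a $(\delta',L)$-inexact first-order oracle in the sense of \citet{Aspremont2008smooth} and then invoke the corresponding convergence theorem. Two preliminary steps are needed: quantifying the accuracy of $\widetilde{D}\Phi_{[\bm A]}$ as an estimator of $D\Phi_{[\bm A]}$, and packaging this into the formal oracle property.

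For the first step, I adopt the matrix-valued perspective of \cref{rmk:GradientPhi} and subtract \eqref{eq:appGradient} from the true gradient; the $64\bm A$ term cancels, leaving a sum over atoms of rank-one matrices $x^{(i)}(y^{(j)})^{\intercal}$ weighted by $\bm\Pi^{\bm A}_{ij}-\widetilde{\bm\Pi}^{\bm A}_{ij}$. The triangle inequality, the identity $\|x(y)^{\intercal}\|_F=\|x\|\|y\|$, and the $\delta$-oracle hypothesis then combine to give the uniform bound
\[
\sup_{\bm A\in\cD_M}\|D\Phi_{[\bm A]}-\widetilde{D}\Phi_{[\bm A]}\|_F \leq 32\delta\sum_{i,j}\|x^{(i)}\|\|y^{(j)}\|.
\]
For the second step, convexity and $L$-smoothness of $\Phi$ on $\cD_M$ (\cref{thm:ConvexitySmoothness}) supply the envelope $0\leq \Phi(\bm B)-\Phi(\bm A)-\langle D\Phi_{[\bm A]},\bm B-\bm A\rangle_F\leq \tfrac{L}{2}\|\bm B-\bm A\|_F^2$ for all $\bm A,\bm B\in\cD_M$. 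Replacing $D\Phi_{[\bm A]}$ by $\widetilde{D}\Phi_{[\bm A]}$ introduces a slack term that is bounded via Cauchy-Schwarz and the Frobenius diameter $\|\bm B-\bm A\|_F\leq M$ on $\cD_M$; the result is precisely $\delta'=32M\delta\sum_{i,j}\|x^{(i)}\|\|y^{(j)}\|$. Taking the inexact function value as $\Phi(\bm A)-\delta'$ and the inexact gradient as $\widetilde{D}\Phi_{[\bm A]}$ then verifies the two-sided inequality defining a $(\delta',L)$-oracle on $\cD_M$.

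I would then invoke d'Aspremont's convergence bound for the fast gradient method with such an oracle to derive \eqref{eq:fgmRate}. The Frobenius-ball projections onto $\cD_M$ appearing in lines 6-7 of \cref{alg:fgmInexact} are exactly the proximal steps required by the scheme, and the initialization $\bm A_0=\bm 0$ makes $\|\bm B^\star\|_F$ the relevant initial distance to the optimum. The iteration count \eqref{eq:maxIterations} then follows by solving the quadratic $(k+1)(k+2)\geq 2L\|\bm B^\star\|_F^2/(\eta-3\delta')$ in closed form, with the coarser bound obtained from $\|\bm B^\star\|_F\leq M/2$ (\cref{cor:StationaryPoint}) together with $L\leq 64$ under convexity. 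The main technical obstacle is the careful propagation of the oracle error through the accelerated iterations so as to recover the dimension-free coefficient $3$ in front of $\delta'$: for accelerated methods, inexact gradients can in principle produce errors that accumulate linearly in $k$, and it is the specific oracle construction (absorbing the Cauchy-Schwarz slack into the function-value slot rather than the gradient slot) that prevents such accumulation and yields the stated constant.
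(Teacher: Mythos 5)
Your core computation is correct and matches the paper's: the cancellation of the $64\bm A$ terms, the bound $\|xy^\intercal\|_F=\|x\|\|y\|$, and the resulting uniform bound $\sup_{\bm A\in\cD_M}\|D\Phi_{[\bm A]}-\widetilde{D}\Phi_{[\bm A]}\|_F\leq 32\delta\sum_{i,j}\|x^{(i)}\|\|y^{(j)}\|$, followed by Cauchy--Schwarz against the Frobenius diameter $M$ of $\cD_M$, yields exactly the quantity $\delta'$. You also correctly identify the projections in lines 6--7 of \cref{alg:fgmInexact} as the proximal steps in d'Aspremont's scheme and obtain \eqref{eq:maxIterations} from the quadratic $(k+1)(k+2)\geq 2L\|\bm B^\star\|_F^2/(\eta-3\delta')$ together with $\|\bm B^\star\|_F\leq M/2$ and $L\leq 64$. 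So the substance of the argument is sound.

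However, the packaging is off in a way that would matter if you wrote it out in full. You describe constructing a Devolder-style $(\delta',L)$-oracle (an inexact function value $\Phi(\bm A)-\delta'$ and inexact gradient $\widetilde{D}\Phi_{[\bm A]}$ satisfying a two-sided envelope inequality), and then say you would "invoke d'Aspremont's convergence bound." These are two different oracle models. \citet{Aspremont2008smooth} (Equation (2.3) there) requires the condition $\bigl|\langle\widetilde{D}\Phi_{[\bm A]}-D\Phi_{[\bm A]},\bm B-\bm C\rangle_F\bigr|\leq\delta'$ for all $\bm A,\bm B,\bm C\in\cD_M$, which is precisely what your Cauchy--Schwarz step establishes directly -- no Devolder-type envelope is needed, and you should verify d'Aspremont's Equation (2.3) rather than a $(\delta',L)$-oracle. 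Moreover, your closing explanation of why the error does not accumulate is not the right mechanism: in the Devolder framework, the accelerated method accumulates error of order $k\delta'$ regardless of how you split slack between the function-value and gradient slots, so "absorbing the Cauchy--Schwarz slack into the function-value slot" would not help. What avoids accumulation is using d'Aspremont's \emph{stronger} oracle assumption -- a uniform inner-product bound over the compact feasible set -- whose analysis of the estimate sequence yields the constant $3\delta'$ directly (after noting $\sum_{i=0}^{k}\alpha_i=\sum_{i=0}^k\frac{i+1}{2}=\frac{(k+1)(k+2)}{4}$). If you had genuinely invoked Devolder's accelerated-method theorem with your $(\delta',L)$-oracle, you would have obtained a bound growing linearly in $k$, contradicting \eqref{eq:fgmRate}.
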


The proof of \cref{thm:fgmRates}, given in \cref{sec:ProoffgmRates}, follows from Theorem 2.2 in \citet{Aspremont2008smooth} after casting our problem as an instance of their setting. Some implications of \cref{thm:fgmRates} are discussed in the following remark.

\begin{remark}[Optimal rates and stopping conditions]
\label{rmk:RatesandStopping}
    First, consider the convergence rate of the function values in \eqref{eq:fgmRate}. The first term 
 on the right-hand side exhibits the optimal complexity bound for smooth constrained optimization of $O(1/k^2)$ (cf., e.g., \citealt{nesterov2003introductory}). The second term accounts for the underlying oracle error. Notably, the progress of the optimization procedure and the oracle error are completely decoupled in this bound.

\medskip
    Next, we describe a stopping condition based on the number of iterations.
    Observe that all terms involved in the upper bound in \eqref{eq:maxIterations} are explicit as soon as a desired precision $\eta$ is chosen since the oracle error $\delta$ can be fixed according to \cref{rmk:InexactSinkhorn}. %
    Consequently, \eqref{eq:maxIterations} can be used as an explicit stopping condition for \cref{alg:fgmInexact}.%
\end{remark}

\subsection{General Case}

We now discuss an optimization procedure which does not require convexity of the objective. This accounts for the fact that outside the sufficient conditions of \cref{thm:ConvexitySmoothness}, convexity of $\Phi$ is generally unknown. However, the same result shows that $\Phi$ is $L$-smooth with $L=64\vee \left(32^2\varepsilon^{-1}\sqrt{M_4(\mu_0)M_4(\mu_1)}-64\right)$ and $\OT_{(\cdot),\varepsilon}$ is $L'$-smooth with $L'=32^2\varepsilon^{-1}\sqrt{M_4(\mu_0)M_4(\mu_1)}$. Hence, we adapt the smooth non-convex optimization routine of \citet{ghadimi2016accelerated} to account for our inexact oracle. Notably, their method adapts to the convexity of $\Phi$ as described in \cref{thm:adaptRates}.   

\medskip
We now present the algorithm and describe its convergence rate.

\begin{algorithm}
\caption{Adaptive gradient method with inexact oracle}\label{alg:fgmNonConvexInexact}
\begin{algorithmic}[1]
\Statex Given $\bm C_0\in\mathcal D_M$, fix the step sequences $\beta_k=\frac{1}{2L}$, $\gamma_k=\frac{k}{4L}$, and $\tau_k=\frac{2}{k+2}$. 
\State $k\gets 1$
\State $\bm A_1\gets \bm C_0$
\State $\bm G_1\gets \widetilde D\Phi_{[\bm A_1]}$
\While{stopping condition is not met}
\State $\bm B_{k}\gets \min\left(1,\frac{M}{2\|\bm A_k-\beta_k\bm G_k\|_F}\right)(\bm A_k-\beta_k\bm G_k)$
\State $\bm C_{k}\gets 
\min\left(1,\frac{M}{2\|\bm C_{k-1}-\gamma_k\bm G_k\|_F}\right)(\bm C_{k-1}-\gamma_k\bm G_k)$
\State $\bm B_{k}\gets \frac M2 \sign( \bm A_k-\beta_k\bm G_k)\min\left(\frac{2}{M}\left|\bm A_k-\beta_k\bm G_k\right|,1\right)$
\State $\bm C_{k}\gets \frac M 2\sign(\bm C_{k-1}-\gamma_k\bm G_k)\min\left(\frac{2}{M}\left|\bm C_{k-1}-\gamma_k\bm G_k\right|,1\right)$
\State $\bm A_{k+1}\gets\tau_k\bm C_k+(1-\tau_k) \bm B_k$
\State $\bm G_{k+1}\gets \widetilde D\Phi_{[\bm A_{k+1}]}$
\State $k\gets k+1$
\EndWhile
\State \textbf{return} $\bm B_k$
\end{algorithmic}
\end{algorithm}

Unlike \cref{alg:fgmInexact}, which can be initialized at any fixed $\bm A_0$, the starting point in \cref{alg:fgmNonConvexInexact} should be chosen according to some selection rule that avoids initializing at a stationary point (e.g., random initialization). Indeed, if $\bm A_1$ is set to a stationary point of $\Phi$, then $D\Phi_{[\bm A_1]}=\bm 0$ and, consequently $\widetilde D\Phi_{[\bm A_1]}\approx\bm 0$ (given that the approximate gradient is reasonably accurate), which may result in premature and undesirable termination. Clearly, this is not a concern for \cref{alg:fgmInexact} since it assumes convexity of $\Phi$, whereby any stationary point is a global optimum. 

\medskip
The following result follows by adapting the proofs of Theorem 2 and Corollary 2 in \citet{ghadimi2016accelerated}. For completeness, we provide a self-contained argument in \cref{app:adaptRates} along with a discussion of how this problem fits in the framework of \citet{ghadimi2016accelerated}.

\begin{theorem}[Adaptive convergence rate]%
\label{thm:adaptRates}
Assume that $\Phi$ is $L$-smooth on $\cD_M$ and that $\widetilde{\bm \Pi}^{\bm A}$ is a $\delta$-oracle for $\bm \Pi^{\bm A}$. Then, the iterates $\bm A_k,\bm B_k$ in \cref{alg:fgmNonConvexInexact} with $\widetilde{D}\Phi_{[\bm A_k]}$ given by \eqref{eq:appGradient}
    satisfy 
    \begin{enumerate}
        \item  If $\Phi$ is non-convex and $\OT_{(\cdot),\varepsilon}(\mu_0,\mu_1)$ is $L'$-smooth, then 
\[
\min_{1\leq i\leq k}\left\|\beta_i^{-1}\mspace{-3mu}\left(\bm B_i\mspace{-2mu}-\mspace{-2mu}\bm A_i\right)\right\|^2_F\leq \mspace{-2mu}\frac{96L^2}{k(k\mspace{-3mu}+\mspace{-3mu}1)(k\mspace{-3mu}+\mspace{-3mu}2)}\|\bm C_0-\bm B^{\star}\|^2_F +\frac{24 LL'}{k}\mspace{-2mu}\left(\mspace{-3mu}\|\bm B^{\star}\mspace{-1mu}\|^2_F\mspace{-2mu}+\mspace{-2mu}\frac{5M^2}{16}\right)+8L\delta',  
\]  
where $\bm B^{\star}$ is a global minimizer of $\Phi$, and $\delta'=32M\delta \sum_{\substack{1\leq i\leq N_0\\1\leq j\leq N_1}}\left\| x^{(i)}\right\|\left\|y^{(j)}\right\|$.
\item If $\Phi$ is convex, then 
\[
\min_{1\leq i\leq k} \left\|\beta_i^{-1}\left(\bm B_i-\bm A_i\right)\right\|^2_F\leq \frac{96L^2}{k(k+1)(k+2)}\|\bm C_0-\bm B^{\star}\|^2_F +8L\delta'.
\]
\end{enumerate}
\end{theorem}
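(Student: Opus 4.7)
The plan is to adapt the analysis of the accelerated gradient method for non-convex smooth optimization of \citet{ghadimi2016accelerated} to our setting, while carefully tracking how the inexact gradient oracle and the projection onto $\cD_M$ propagate through the scheme. The central object is the (composite) gradient mapping $\beta_k^{-1}(\bm B_k - \bm A_k)$, which plays the role of the gradient for constrained optimization on $\cD_M$: it vanishes precisely at stationary points of $\Phi|_{\cD_M}$, so controlling its minimum over iterates yields the desired stationarity guarantee.

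First, I would quantify the oracle error in the Frobenius norm. Combining the explicit form of the approximate gradient in \eqref{eq:appGradient} with the expression for $D\Phi_{[\bm A]}$ from \cref{prop:PhiDerivative} yields
\begin{equation*}
\bigl\|\widetilde D\Phi_{[\bm A]} - D\Phi_{[\bm A]}\bigr\|_F \le 32\delta \sum_{1 \le i \le N_0,\, 1\le j\le N_1}\bigl\|x^{(i)}\bigr\|\bigl\|y^{(j)}\bigr\|.
\end{equation*}
Together with $\|\bm u - \bm v\|_F \le M$ for $\bm u,\bm v \in \cD_M$, a Cauchy--Schwarz step upgrades the standard $L$-smoothness descent estimate to the inexact version
\begin{equation*}
\Phi(\bm y) \le \Phi(\bm x) + \bigl\langle \widetilde D\Phi_{[\bm x]}, \bm y - \bm x \bigr\rangle_F + \tfrac{L}{2}\|\bm y - \bm x\|_F^2 + \delta',
\end{equation*}
valid for all $\bm x, \bm y \in \cD_M$. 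This matches the $\delta'$-accurate oracle framework, so the one-step analysis of \citet{ghadimi2016accelerated} can be invoked with $\delta'$ playing the role of their oracle error.

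Second, using that $\bm B_k$ and $\bm C_k$ are Euclidean projections of $\bm A_k - \beta_k \bm G_k$ and $\bm C_{k-1} - \gamma_k \bm G_k$ onto $\cD_M$, I would write the first-order optimality conditions for these two projection steps and combine them with the convex combination $\bm A_{k+1} = \tau_k \bm C_k + (1-\tau_k)\bm B_k$. Feeding these into the inexact descent inequality above and following the potential-function bookkeeping of \citet{ghadimi2016accelerated} with the prescribed schedule $\beta_k = 1/(2L)$, $\gamma_k = k/(4L)$, $\tau_k = 2/(k+2)$ produces a recursion in which the squared gradient mapping appears as a negative term, balanced against differences of potential values and an additive $O(L \delta')$ error per step. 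Telescoping and using $\sum_{i=1}^k \gamma_i/\tau_i = \Theta(k^3/L)$ contributes the $96 L^2/(k(k+1)(k+2))\|\bm C_0 - \bm B^{\star}\|_F^2$ factor to the final bound on the minimum gradient mapping.

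The main obstacle, which dictates the case split in the statement, is the comparator term $\Phi(\bm A_{k+1}) - \Phi(\bm B^{\star})$ that appears from the telescoping. In the convex case (2), this term is non-negative and can be dropped, producing the clean bound with only the additive $8L\delta'$ oracle contribution. In the non-convex case (1), it must be controlled by $L'$-smoothness of $\OT_{(\cdot),\varepsilon}$: since $\Phi$ differs from $\OT_{(\cdot),\varepsilon}(\mu_0,\mu_1)$ only by the convex quadratic $32\|\cdot\|_F^2$, the $L'$-smoothness of $\OT_{(\cdot),\varepsilon}$ bounds $\Phi(\bm A_{k+1}) - \Phi(\bm B^{\star})$ by a quadratic in $\|\bm A_{k+1}\|_F$ and $\|\bm B^{\star}\|_F$. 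Since the iterates lie in $\cD_M$, we have $\|\bm A_{k+1}\|_F \le M/2$, which (together with $\|\bm B^{\star}\|_F \le M/2$) accounts for the constant $5M^2/16$ and contributes the $24 L L'/k$ term after weighting by the potential-function coefficients. Taking the minimum over $i \le k$ on the left-hand side and simplifying yields the stated bounds, and the convex case is recovered by simply retaining the sign and dropping the $L'$-smoothness step.
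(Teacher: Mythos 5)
Your proposal is correct and follows the same route as the paper: cast the problem in the composite form of \citet{ghadimi2016accelerated} with $f=\OT_{(\cdot),\varepsilon}(\mu_0,\mu_1)$ non-convex $L'$-smooth, $g=32\|\cdot\|_F^2$ convex, and $\cQ=\cI_{\cD_M}$, quantify the oracle error $\delta'$ via the Frobenius bound $\|\widetilde D\Phi_{[\bm A]}-D\Phi_{[\bm A]}\|_F\le 32\delta\sum\|x^{(i)}\|\|y^{(j)}\|$ and the diameter of $\cD_M$, and telescope with the prescribed step schedule. One small imprecision: the $L'$-terms do not arise from bounding $\Phi(\bm A_{k+1})-\Phi(\bm B^\star)$ directly; rather they come from the $\tfrac{L'}{2}\|\cdot\|_F^2$ defect in the tangent-line lower bound for the non-convex part when expanding $\Phi(\bm A_k)-(1-\tau_k)\Phi(\bm B_{k-1})-\tau_k\Phi(\bm H)$, which cannot be dropped unless $\Phi$ is convex. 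The quadratic $\|\bm H-\bm A_k\|_F^2$ appearing there is then bounded by $2(\|\bm B^\star\|_F^2+M^2/4)$ after setting $\bm H=\bm B^\star$, which is where the $5M^2/16$ constant originates. Your description of the convex case ($L'=0$), the $\delta'$ propagation, and the cubic telescoping constant is accurate.
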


To better motivate this result, we show that when  $\left\|\beta_k^{-1}\left(\bm B_k-\bm A_k\right)\right\|_F$ is small,  $D{\Phi}_{[\bm A_k]}$ is approximately stationary.

\begin{corollary}[Approximate stationarity]
\label{cor:adaptRatesGradient}
Let $\bm A_k,\bm B_k$ be iterates from \cref{alg:fgmNonConvexInexact} and assume that $\bm B_k\in\interior(\mathcal D_M)$. Then,  
\[
    \|D\Phi_{[\bm A_k]}\|_F<32\delta\sum_{\substack{1\leq i\leq N_0\\1\leq j\leq N_1}} {\|x^{(i)}\|\|y^{(j)}\|}+\left\|\beta_{k}^{-1}(\bm B_{k}-\bm A_{k})\right\|_F. 
\]
\end{corollary}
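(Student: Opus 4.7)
The plan is to exploit the projection structure built into \cref{alg:fgmNonConvexInexact} together with the $\delta$-oracle accuracy to control the exact gradient $D\Phi_{[\bm A_k]}$ by an explicit approximate-stationarity residual. First I would use the assumption $\bm B_k \in \interior(\mathcal D_M)$ to remove the projection from the update of $\bm B_k$: since $\bm B_k$ is the projection of $\bm A_k - \beta_k \bm G_k$ onto $\mathcal D_M$ and the projection operator is the identity on the interior of its target set, the hypothesis forces $\bm A_k - \beta_k \bm G_k \in \interior(\mathcal D_M)$, so that
\[
\bm G_k = \beta_k^{-1}(\bm A_k - \bm B_k).
\]

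Next I would estimate the oracle-induced gap between $\bm G_k = \widetilde D\Phi_{[\bm A_k]}$ and the true gradient $D\Phi_{[\bm A_k]}$. By the definition \eqref{eq:appGradient} together with the identification in \cref{rmk:GradientPhi}, the only difference between $\widetilde D\Phi_{[\bm A_k]}$ and $D\Phi_{[\bm A_k]}$ is that the former uses $\widetilde{\bm \Pi}^{\bm A_k}$ in place of the exact EOT plan $\bm \Pi^{\bm A_k}$, so
\[
\widetilde D\Phi_{[\bm A_k]} - D\Phi_{[\bm A_k]} = 32\sum_{i,j} x^{(i)} (y^{(j)})^{\intercal}\bigl(\bm \Pi^{\bm A_k}_{ij} - \widetilde{\bm \Pi}^{\bm A_k}_{ij}\bigr).
\]
Taking Frobenius norms, using $\|x y^{\intercal}\|_F = \|x\|\,\|y\|$, the triangle inequality, and the defining bound $\|\widetilde{\bm \Pi}^{\bm A_k} - \bm \Pi^{\bm A_k}\|_{\infty} < \delta$, I obtain
\[
\bigl\|\widetilde D\Phi_{[\bm A_k]} - D\Phi_{[\bm A_k]}\bigr\|_F < 32\delta \sum_{i,j} \|x^{(i)}\|\,\|y^{(j)}\|.
\]

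Finally I would combine these two ingredients via one more triangle inequality,
\[
\|D\Phi_{[\bm A_k]}\|_F \leq \|\bm G_k\|_F + \bigl\|D\Phi_{[\bm A_k]} - \bm G_k\bigr\|_F,
\]
and substitute the identification $\bm G_k = \beta_k^{-1}(\bm A_k - \bm B_k)$ to recover the claimed inequality. I do not anticipate any serious obstacle here: the argument amounts to two applications of the triangle inequality plus one elementary convex-analytic observation about inactive projections. The only subtlety worth checking carefully is that interiority of $\bm B_k$ in $\mathcal D_M$ is indeed the correct condition to deactivate whichever projection map is used in the $\bm B_k$ update line of \cref{alg:fgmNonConvexInexact}, which is immediate once one recalls that projection onto a closed convex set fixes every interior point of the set.
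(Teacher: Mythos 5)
Your proof is correct and follows essentially the same approach as the paper's: you use interiority of $\bm B_k$ to deactivate the projection and recover $\bm G_k = \beta_k^{-1}(\bm A_k - \bm B_k)$, bound the gradient approximation error via the $\delta$-oracle and $\|x y^\intercal\|_F = \|x\|\|y\|$, and combine with the triangle inequality. The paper's proof is identical in substance, quoting the pre-derived bound \eqref{eq:AppGradientDiff} in place of your recomputation.
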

The proof of \cref{cor:adaptRatesGradient} follows from the $\delta$-oracle assumption and the fact that when $\bm B_k$ is an interior point of $\mathcal D_M$, we have $\bm B_k=\bm A_k-\beta_k\bm G_k$. See \cref{sec:proofCorAdaptRatesGradient} for the full details. When $\bm B_{k}$ is not an interior point of $\cD_M$, the interpretation of $\|\beta_k^{-1}(\bm B_k-\bm A_k)\|_F$ is less straightforward. However, as all stationary points of $\Phi$ are contained in $\mathcal D_{M_{\mu_0,\mu_1}}$, it is expected that  \cref{alg:fgmNonConvexInexact} will converge to an interior point when $M>M_{\mu_0,\mu_1}$. By analogy with \cref{rmk:RatesandStopping}, when all iterates are interior points \cref{alg:fgmNonConvexInexact} yields a bound on the total number of iterations required to achieve an approximate stationary point.

The following remark addresses the distinctions between the convex and non-convex settings in \cref{thm:adaptRates}.

\begin{remark}[Adaptivity of \cref{alg:fgmNonConvexInexact}] 
As in \cref{thm:fgmRates}, the convergence rates in \cref{thm:adaptRates} are decoupled into a term related to the progress of the optimization procedure and a term related to the oracle error.

\medskip
In the case where $\Phi$ is non-convex, the dominant term in the optimization error is $O(1/k)$, which coincides with the best known rates for solving general unconstrained nonlinear programs \citep{ghadimi2016accelerated}. On the other hand, 
when $\Phi$ is convex, the rate of convergence improves to $O(1/k^3)$ which essentially matches the best known rates for the norm of the gradient in the unconstrained accelerated gradient method applied to a convex L-smooth function (see Theorem 6 in \citet{shi2021understanding} and Theorem 3.1 in \citet{chen2022gradient}\footnote{More precisely, \citet{chen2022gradient} show that the iterates $(y_i)_{i=1}^k$ generated by the accelerated gradient method applied to a convex $L$-smooth function $f$ are such that $\min_{0\leq i \leq k}\|\nabla f(y_i)\|^2=o(1/k^3)$.}). This adaptivity is beneficial, as $\Phi$ may be convex beyond the conditions derived in \cref{thm:ConvexitySmoothness}.

\end{remark}

An empirical comparison of Algorithms \ref{alg:fgmInexact} and \ref{alg:fgmNonConvexInexact} in the convex setting is included in \cref{subsec:NumericalExperiments}. In particular,  \cref{alg:fgmInexact} is seen to outperform \cref{alg:fgmNonConvexInexact} in terms of average runtime despite having the same per iteration complexity when the inexact gradient is computed using standard Sinkhorn iterations.

\begin{remark}[Computational complexity of Algorithms \ref{alg:fgmInexact} and \ref{alg:fgmNonConvexInexact}]
\label{rmk:CompComplex}
As Sinkhorn's algorithm is known to have a complexity of $O(N_0N_1)$ (cf. e.g. \citealt{scetbon2022linear}), the gradient approximation \eqref{eq:appGradient} can be computed in $O(N_0N_1)$ time. It follows that Algorithms \ref{alg:fgmInexact} and \ref{alg:fgmNonConvexInexact} admit a computational complexity of $O(N_0N_1)$. 
\end{remark}

\subsection{Approximating Unregularized Gromov-Wasserstein Distances}

\edit{The EGW distance can approximate unregularized GW to an arbitrary precision, with  error $\left|\mathsf S_{\varepsilon}(\mu_0,\mu_1)-\mathsf S_{0}(\mu_0,\mu_1)\right|=O\left(\varepsilon\log\left(\frac{1}{\varepsilon}\right)\right)$ as $\varepsilon\downarrow 0$ (see Proposition 1 in \citet{Zhang22Gromov}). It is therefore natural to ask if the proposed algorithms can be used to approximate the unregularized GW distance between finitely supported marginals. Note, however, that for $\varepsilon>0$ sufficiently small, $\Phi$ may fail to be convex (see \cref{thm:ConvexitySmoothness}), whence \cref{alg:fgmNonConvexInexact} may only converge to an approximate stationary point of $\Phi$. As such, it is not guaranteed that itself $\mathsf S_{\varepsilon}(\mu_0,\mu_1)$ (i.e., the global minimum of the entropic problem) can be approximated to within a desired accuracy. Nevertheless, we show that these approximate stationary points can be used to capture a notion of local optimality for $\mathsf S_{0}^2(\mu_0,\mu_1)$, keeping in mind that the quadratic GW distance decomposes as $\mathsf S_{0}(\mu_0,\mu_1)=\mathsf S^{1}(\mu_0,\mu_1)+\mathsf S_{0}^2(\mu_0,\mu_1)$ for centered distributions (see Corollary 1 in \citet{Zhang22Gromov}).

As opposed to EOT, optimal solutions to unregularized OT may fail to be unique, which entails that $\Phi_0\coloneqq32\|\cdot\|_F^2+\OT_{(\cdot),0}(\mu_0,\mu_1)$ may be non-differentiable. Remarkably, the following analogue of \cref{cor:StationaryPoint} on stationary points still holds.

\begin{proposition}[Optimality conditions for $\Phi_0$]
    \label{prop:UnregularizedSolutions}
    Let $(\mu_0,\mu_1)\in\mathcal P(\mathbb R^{d_0})\times \mathcal P(\mathbb R^{d_1})$ be compactly supported. If $\bar{\bm A}$ is a local minimizer of $\Phi_0$, then there exists a solution $\bar\pi$ to $\OT_{\bar{\bm A},0}(\mu_0,\mu_1)$ with $ \bar{\bm A}=\frac12\int xy^{\intercal}d\bar{\pi}(x,y)\in\mathcal D_M$ for any $M>M_{\mu_0,\mu_1}$. If $\bar{\bm A}$ is globally optimal and $\mu_0,\mu_1$ are centered, then $\bar \pi$ solves $\mathsf S_0(\mu_0,\mu_1)$. 
\end{proposition}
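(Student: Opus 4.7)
The proof mirrors the structure of \cref{cor:StationaryPoint}, but must accommodate the possible non-differentiability of $\Phi_0$ arising from non-uniqueness of unregularized OT couplings. I would replace the Fr\'echet-derivative argument of \cref{prop:PhiDerivative} with a one-sided perturbation argument that exploits concavity of $\bm A \mapsto \OT_{\bm A,0}(\mu_0,\mu_1)$ (as an infimum of functions affine in $\bm A$). The globally optimal case then reduces to combining the unregularized analogue of \cref{lem:EGWDual} (Corollary~1 in \citealt{Zhang22Gromov}) with a completion-of-the-square identity.

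\textbf{Stationarity via perturbation.} Fix a local minimizer $\bar{\bm A}$ and select any optimal coupling $\bar\pi$ for $\OT_{\bar{\bm A},0}(\mu_0,\mu_1)$, whose existence follows from weak compactness of $\Pi(\mu_0,\mu_1)$ and continuity of $c_{\bar{\bm A}}$ on $\supp(\mu_0)\times \supp(\mu_1)$. For any $\bm B\in\mathbb R^{d_0\times d_1}$ and $t>0$, feasibility of $\bar\pi$ in $\OT_{\bar{\bm A}+t\bm B,0}$ yields
\[
\OT_{\bar{\bm A}+t\bm B,0}(\mu_0,\mu_1)-\OT_{\bar{\bm A},0}(\mu_0,\mu_1)\leq -32t\left\langle \bm B,\int xy^{\intercal}\,d\bar\pi(x,y)\right\rangle_F.
\]
Combining with the expansion of $32\|\bar{\bm A}+t\bm B\|_F^2$ gives
\[
\Phi_0(\bar{\bm A}+t\bm B)-\Phi_0(\bar{\bm A})\leq t\left\langle 64\bar{\bm A}-32\int xy^{\intercal}\,d\bar\pi(x,y),\,\bm B\right\rangle_F+32t^2\|\bm B\|_F^2.
\]
Local minimality forces the left-hand side to be nonnegative for small $t>0$; dividing by $t$, sending $t\downarrow 0$, and applying the resulting inequality to both $\bm B$ and $-\bm B$ yields $\bar{\bm A}=\tfrac{1}{2}\int xy^{\intercal}\,d\bar\pi(x,y)$. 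The containment $\bar{\bm A}\in\mathcal D_M$ for $M>M_{\mu_0,\mu_1}$ then follows from Jensen's inequality and Cauchy-Schwarz exactly as in the discussion following \cref{lem:EGWDual}.

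\textbf{Global optimality.} Now assume $\bar{\bm A}$ is globally optimal and $\mu_0,\mu_1$ are centered. The unregularized analogue of \cref{lem:EGWDual} gives $\mathsf S_0^2(\mu_0,\mu_1)=\inf_{\bm A\in\mathcal D_M}\Phi_0(\bm A)=\Phi_0(\bar{\bm A})$ for any $M\geq M_{\mu_0,\mu_1}$. Setting $\bm M\coloneqq\int xy^{\intercal}\,d\bar\pi(x,y)$, substituting $\bar{\bm A}=\tfrac12 \bm M$ into $\Phi_0(\bar{\bm A})=32\|\bar{\bm A}\|_F^2+\int c_{\bar{\bm A}}\,d\bar\pi$, and simplifying via $\langle \bar{\bm A},\bm M\rangle_F=\tfrac12\|\bm M\|_F^2$ yields
\[
\Phi_0(\bar{\bm A})=\int -4\|x\|^2\|y\|^2\,d\bar\pi(x,y)-8\|\bm M\|_F^2,
\]
which is precisely the $\mathsf S_0^2$ objective evaluated at $\bar\pi$. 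Hence $\bar\pi$ attains $\mathsf S_0^2(\mu_0,\mu_1)$, and the centered-marginal decomposition $\mathsf S_0=\mathsf S^1+\mathsf S_0^2$ (the $\varepsilon=0$ version of \eqref{eq:EGWDecomp}) implies $\bar\pi$ solves $\mathsf S_0(\mu_0,\mu_1)$.

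\textbf{Main obstacle.} The principal technical hurdle is that the Fr\'echet derivative in \cref{prop:PhiDerivative} need not exist at $\bar{\bm A}$ when optimal unregularized couplings are non-unique, so the stationarity identity cannot be extracted by setting a gradient to zero. The perturbation approach above circumvents this by using only the one-sided upper bound on $\OT_{\bar{\bm A}+t\bm B,0}$, which remains valid for \emph{any} optimal $\bar\pi$ and suffices to produce the equality once two-sided variations in $\bm B$ are combined.
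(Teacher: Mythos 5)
Your proof is correct, but it takes a genuinely different route from the paper's. The paper establishes that $\Phi_0$ is locally Lipschitz, characterizes its set of differentiability via the cross-correlations of optimal couplings, computes the Clarke subdifferential $\partial\Phi_0(\bm A)=\{64\bm A-32\int xy^{\intercal}\,d\pi:\pi\in\Pi^{\star}_{\bm A,0}\}$ through \eqref{eq:subdifferentialCharacterization}, and then invokes the nonsmooth optimality condition $0\in\partial\Phi_0(\bar{\bm A})$, which asserts \emph{existence} of a suitable $\bar\pi$. Your argument instead exploits directly that $-\OT_{(\cdot),0}$ is convex, so $\Phi_0$ is smooth-minus-convex; picking an arbitrary optimal $\bar\pi$ supplies the subgradient $32\int xy^{\intercal}\,d\bar\pi$ of $-\OT_{(\cdot),0}$, and the one-sided estimate combined with local minimality and the variation $\bm B\mapsto -\bm B$ yields $64\bar{\bm A}=32\int xy^{\intercal}\,d\bar\pi$. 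This is cleaner and more self-contained: it bypasses Rademacher's theorem, the Clarke calculus rules, and the differentiability classification in the paper's Lemmas 25--28. Interestingly, because your argument works for \emph{every} optimal $\bar\pi$, it yields the (mildly stronger, and consistent with the paper's Lemma 27) byproduct that all optimal couplings at a local minimizer share the same cross-correlation matrix, i.e.\ $\Phi_0$ is in fact differentiable at any local minimizer. The paper's machinery is likely retained because it also serves the $\Gamma$-convergence arguments in \cref{thm:approxConvergence}; but for \cref{prop:UnregularizedSolutions} alone your route is more economical. The global-optimality step is essentially identical in both proofs: both substitute $\bar{\bm A}=\tfrac12\int xy^{\intercal}\,d\bar\pi$ into $\Phi_0$ and match it against the $\sS_0^2$ objective at $\bar\pi$ via the unregularized analogue of \cref{lem:EGWDual}.
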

The proof of \cref{prop:UnregularizedSolutions}, which will appear in \cref{sec:proofUnregularized}, follows similar lines to the proofs provided in the regularized case with the caveat that $\Phi_0$ is merely locally Lipschitz. To adapt these results, we characterize the Clarke subdifferential 
 of $\Phi_0$ \citep{clarke1975generalized}. 

\medskip
 \cref{prop:UnregularizedSolutions} shows that any minimizer, $\bar{\bm A}_0\in\mathcal D_M$, of $\Phi_0$ satisfies an analogous criterion to the stationary points, $\bar{\bm A}_\varepsilon\in\mathcal D_M$,  of $\Phi_{\varepsilon}\coloneqq32\|\cdot\|^2_F+\OT_{(\cdot),\varepsilon}(\mu_0,\mu_1)$ for $\varepsilon>0$ (namely, that $\bar{\bm A}_{\varepsilon}=\frac12\int xy^{\intercal}d\bar{\pi}_{\varepsilon}(x,y)$ for some $\bar \pi_{\varepsilon}$ solving $\OT_{\bar{\bm A}_{\varepsilon},0}(\mu_0,\mu_1)$ for $\varepsilon\geq 0$). As noted previously, when $\varepsilon$ is small, we can only guarantee that \cref{alg:fgmNonConvexInexact} will converge to an approximate stationary point, ${\bm A}^{\star}_{\varepsilon}\in\mathcal D_M$, satisfying $\|D(\Phi_{\varepsilon})_{[\bm A^{\star}_{\varepsilon}]}\|_F=\|64{\bm A}^{\star}_{\varepsilon}-32\int xy^{\intercal}d{\pi}^{\star}_{\varepsilon}(x,y)\|_F\leq \delta$, for some $\delta>0$, where ${\pi}^{\star}_{\varepsilon}$ solves $\OT_{{\bm A}^{\star}_{\varepsilon},\varepsilon}(\mu_0,\mu_1)$.  We now show that the limit points of $\bm A^{\star}_{\varepsilon}$, as $\varepsilon\downarrow 0$, are approximately stationary for $\Phi_0$. Furthermore, if the matrices along the sequence are globally/locally optimal (in an appropriate sense), we show that global/local optimality is inherited at the limit. %

\begin{theorem}[Convergence of approximate stationary points]
\label{thm:approxConvergence}
   Let $(\mu_0,\mu_1)\in\mathcal P(\mathbb R^{d_0})\times \mathcal P(\mathbb R^{d_1})$ be compactly supported and fix $\delta\geq 0$. For $\varepsilon>0$, let $\bm A_{\varepsilon}^{\star}\in\mathcal D_M$ be such that $\|D(\Phi_{\varepsilon})_{[\bm A_{\varepsilon}^{\star}]}\|_F\leq\delta$ and let $\bm A^{\star}_0$ be a cluster point of $(\bm A^{\star}_{\varepsilon})_{\varepsilon>0}$ (as $\varepsilon\downarrow 0$). Then, 
  \begin{enumerate}
      \item  $\|64\bm A^{\star}_{0}-32\int xy^{\intercal}d\pi_{\bm A^{\star}_{0}}\|_F\leq \delta$, where $\pi_{\bm A^{\star}_{0}}$ is a solution of $\OT_{\bm A^{\star}_{0},0}(\mu_0,\mu_1)$.
      \item if $\bm A_{\varepsilon}^{\star}$ is a  global minimizer of $\Phi_{\varepsilon}$ for all $\varepsilon>0$ sufficiently small, then $\bm A^{\star}_0$ minimizes $\Phi_0$.
      \item if, up to a subsequence $\varepsilon_n\downarrow 0$ along which $\bm A^{\star}_{\varepsilon_n}\to \bm A^{\star}_0$,  $\bm A_{\varepsilon_n}^{\star}$ minimizes $\Phi_{\varepsilon_n}$ on a ball of fixed radius $r>0$ centred at $\bm A^{\star}_0$, then $\bm A^{\star}_0$ is a local minimizer of $\Phi_0$.
  \end{enumerate} 
   
\end{theorem}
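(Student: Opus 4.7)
By compactness of $\mathcal D_M$ we may pass to a subsequence $\varepsilon_n\downarrow 0$ along which $\bm A^\star_{\varepsilon_n}\to \bm A^\star_0$. Let $\pi^\star_{\varepsilon_n}$ denote the unique EOT coupling for $\OT_{\bm A^\star_{\varepsilon_n},\varepsilon_n}(\mu_0,\mu_1)$, so that by \cref{rmk:GradientPhi} the hypothesis reads
\[
\Big\|64\bm A^\star_{\varepsilon_n}-32\int xy^{\intercal}\,d\pi^\star_{\varepsilon_n}(x,y)\Big\|_F\leq \delta.
\]
Since $\supp(\mu_0)\times\supp(\mu_1)$ is compact, the family $(\pi^\star_{\varepsilon_n})_n\subset\Pi(\mu_0,\mu_1)$ is tight, so by passing to a further subsequence $\pi^\star_{\varepsilon_n}\rightharpoonup \pi^\star_0\in\Pi(\mu_0,\mu_1)$.

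The plan is to first show that $\pi^\star_0$ is optimal for the unregularized problem $\OT_{\bm A^\star_0,0}(\mu_0,\mu_1)$. Since the cost $c_{\bm A}(x,y)=-4\|x\|^2\|y\|^2-32 x^{\intercal}\bm A y$ depends continuously on $\bm A$ and the supports are compact, $c_{\bm A^\star_{\varepsilon_n}}\to c_{\bm A^\star_0}$ uniformly on $\supp(\mu_0)\times\supp(\mu_1)$. For any competitor $\pi\in\Pi(\mu_0,\mu_1)$ with $\KL(\pi\|\mu_0\otimes\mu_1)<\infty$, the EOT optimality of $\pi^\star_{\varepsilon_n}$ gives
\[
\int c_{\bm A^\star_{\varepsilon_n}}\,d\pi^\star_{\varepsilon_n}+\varepsilon_n\KL(\pi^\star_{\varepsilon_n}\|\mu_0\otimes\mu_1)\leq \int c_{\bm A^\star_{\varepsilon_n}}\,d\pi+\varepsilon_n\KL(\pi\|\mu_0\otimes\mu_1).
\]
Dropping the nonnegative KL on the left, using uniform cost convergence and weak continuity of integration against continuous functions, and sending $n\to\infty$ yields $\int c_{\bm A^\star_0}\,d\pi^\star_0\leq \int c_{\bm A^\star_0}\,d\pi$. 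Extending this inequality to arbitrary $\pi\in\Pi(\mu_0,\mu_1)$ by a standard block-approximation argument (constructing $\pi^{(k)}\ll\mu_0\otimes\mu_1$ with finite KL divergence, $\pi^{(k)}\rightharpoonup \pi$, and $\int c_{\bm A^\star_0}\,d\pi^{(k)}\to\int c_{\bm A^\star_0}\,d\pi$) shows that $\pi^\star_0$ solves $\OT_{\bm A^\star_0,0}(\mu_0,\mu_1)$. This is the main technical step; the rest is extracting consequences. For Item (i), since $(x,y)\mapsto xy^{\intercal}$ is bounded continuous on the compact product support, $\int xy^{\intercal}\,d\pi^\star_{\varepsilon_n}\to\int xy^{\intercal}\,d\pi^\star_0$. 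Passing to the limit in the displayed oracle inequality yields $\|64\bm A^\star_0-32\int xy^{\intercal}\,d\pi^\star_0\|_F\leq \delta$, proving Item (i) with $\pi_{\bm A^\star_0}\coloneqq \pi^\star_0$.

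For Item (ii), fix any $\bm A\in\mathcal D_M$. The lower-bound argument above applied with $\bm A$ in place of $\bm A^\star_0$, and using $\bm A^\star_{\varepsilon_n}\to\bm A^\star_0$ together with the global minimality $\Phi_{\varepsilon_n}(\bm A^\star_{\varepsilon_n})\leq\Phi_{\varepsilon_n}(\bm A)$, yields
\[
\Phi_0(\bm A^\star_0)=32\|\bm A^\star_0\|_F^2+\int c_{\bm A^\star_0}\,d\pi^\star_0\leq \liminf_{n\to\infty}\Phi_{\varepsilon_n}(\bm A^\star_{\varepsilon_n})\leq \limsup_{n\to\infty}\Phi_{\varepsilon_n}(\bm A).
\]
To upper bound the right-hand side by $\Phi_0(\bm A)$, pick any optimal plan $\bar\pi$ for $\OT_{\bm A,0}(\mu_0,\mu_1)$ and use block approximations $\bar\pi^{(k)}\ll\mu_0\otimes\mu_1$ with finite KL to obtain, for each $k$, $\limsup_n\OT_{\bm A,\varepsilon_n}(\mu_0,\mu_1)\leq \int c_{\bm A}\,d\bar\pi^{(k)}$ (the entropic term vanishes since its prefactor $\varepsilon_n\to 0$). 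Taking $k\to\infty$ gives $\limsup_n\Phi_{\varepsilon_n}(\bm A)\leq \Phi_0(\bm A)$. Combining these bounds yields $\Phi_0(\bm A^\star_0)\leq\Phi_0(\bm A)$ for every $\bm A\in\mathcal D_M$, establishing global optimality. Item (iii) follows by the same argument restricted to the closed ball $\bar B(\bm A^\star_0,r)\cap\mathcal D_M$: since the minimality $\Phi_{\varepsilon_n}(\bm A^\star_{\varepsilon_n})\leq\Phi_{\varepsilon_n}(\bm A)$ now holds only for $\bm A\in \bar B(\bm A^\star_0,r)$, the conclusion $\Phi_0(\bm A^\star_0)\leq\Phi_0(\bm A)$ likewise holds only on that ball, giving local optimality.

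The main obstacle is the $\Gamma$-convergence-type argument identifying $\pi^\star_0$ as an optimizer of the unregularized OT problem with the limiting cost, in particular the block-approximation step used both to extend the lower-bound inequality beyond finite-KL competitors and to produce the matching upper bound in Item (ii). The remaining pieces---continuity of $\bm A\mapsto c_{\bm A}$, tightness on compact supports, and pointwise continuity of $\pi\mapsto \int xy^{\intercal}\,d\pi$ against weak convergence---are routine given the compact support assumption.
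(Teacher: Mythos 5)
Your proof is correct and takes essentially the same approach as the paper: establish that the limiting coupling $\pi^\star_0$ is optimal for the unregularized OT problem with cost $c_{\bm A^\star_0}$ by combining weak compactness of $\Pi(\mu_0,\mu_1)$, uniform convergence of $c_{\bm A^\star_{\varepsilon_n}}$, and a block-approximation argument to extend the liminf inequality beyond finite-KL competitors, then pass to the limit in the gradient bound (Item 1) and sandwich $\Phi_0(\bm A^\star_0)$ between $\liminf_n \Phi_{\varepsilon_n}(\bm A^\star_{\varepsilon_n})$ and $\limsup_n \Phi_{\varepsilon_n}(\bm A)\leq\Phi_0(\bm A)$ (Items 2 and 3). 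The only stylistic difference is that the paper packages the coupling-level and matrix-level arguments as two explicit $\Gamma$-convergence statements (with a coupled block size $\ell_k=\varepsilon_k$), whereas you unroll these directly and use a decoupled double limit (fix the block size $\ell$, send $\varepsilon_n\downarrow 0$, then send $\ell\downarrow 0$), which is equally valid since the block approximation has finite KL for fixed $\ell$.
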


Recall that $\mathcal D_M$ is compact such that a cluster point $\bm A^{\star}_0$ always exists in \cref{thm:approxConvergence}.
In light of \cref{prop:UnregularizedSolutions}, these limit points can be thought of as approximate stationary points for $\Phi_0$. This enables approximating local solutions to the unregularized variational GW problem by stationary points obtained using \cref{alg:fgmNonConvexInexact} for small $\varepsilon$. %
The proof of \cref{thm:approxConvergence} uses the notion of $\Gamma$-convergence (see e.g. \citealt{braides2014local}) to show that the solutions of $\OT_{\bm A^{\star}_{\varepsilon},\varepsilon}(\mu_0,\mu_1)$ converge to a solution of $\OT_{\bm A^{\star}_{0},0}(\mu_0,\mu_1)$ up to a subsequence and that the local/global minimizers of $\Phi_{{\varepsilon}}$ admit local/global minimizers of $\Phi_0$ as cluster points, see \cref{sec:thmapproxConvergence} for details. %
}

\subsection{Numerical Experiments}
\label{subsec:NumericalExperiments}
We conclude this section with some experiments that empirically validate the rates obtained in Theorems \ref{thm:fgmRates} and \ref{thm:adaptRates} and the computational complexity discussed in \cref{rmk:CompComplex}. All experiments were performed on a desktop computer with 16 GB of RAM and an Intel i5-10600k CPU using the Python programming language. The considered marginal distributions described below, $\mu_0,\mu_1$ were randomly generated. 

\medskip

\noindent\textbf{Convergence rates.}
Figure~\ref{fig:CombinedFigure} (a) presents an example of applying \cref{alg:fgmInexact} to a convex $\Phi$, where the marginals are $\mu_0=0.4\delta_{-1.4}+0.6\delta_{1.2}$ and  $\mu_1=0.4\delta_{-1.01}+0.6\delta_{1.31}$, with  $\varepsilon$ chosen large enough to guarantee convexity. The theoretical rate of $O(k^{-2})$ from \cref{thm:fgmRates} on the optimality gap $\Phi(\bm B_k)-\Phi(\bm B^{\star})$ is seen to hold.\footnote{The plot shows the approximate  gap $\Phi(\bm B_k)-\Phi(\bar{\bm B}^{\star})$, where $\bar{\bm B}^{\star}$ is the iterate attaining the minimal objective value.} 
Figure~\ref{fig:CombinedFigure} (b)  illustrates the progress of \cref{alg:fgmNonConvexInexact} 
 applied to a non-convex $\Phi$, for $\mu_0=\frac{1}{3}\left(\delta_{0.3}+\delta_{-0.8}+\delta_{-0.5}\right)$ and $\mu_1=\frac{1}{3}\left(\delta_{(0.1,0.6)}+\delta_{(-0.5,0.3)}+\delta_{(0.4,-0.3)}\right)$, with $\varepsilon=0.07$ which makes $\Phi$ non-convex. The $O(k^{-1})$ rate for $\min_{1\leq i\leq k}\left\|\beta_i^{-1}\mspace{-3mu}\left(\bm B_i\mspace{-2mu}-\mspace{-2mu}\bm A_i\right)\right\|^2_F$ in the non-convex case from \cref{thm:adaptRates} is well reflected in this example.
Figure~\ref{fig:CombinedFigure} (c) shows that \cref{alg:fgmNonConvexInexact} can match the theoretical rate of $O(k^{-3})$ %
in the convex regime  when initialized in a region of local convexity. In~this example, the generated marginals are $\mu_0=\frac{1}{5}\left(\delta_{-0.1}+\delta_{-0.2}+\delta_{0.2}+\delta_{-0.3}+\delta_{0.3}\right)$ and $\mu_1=\frac{1}{5}\left(\delta_{0.2}+\delta_{-0.3}+\delta_{0.3}+\delta_{-0.4}+\delta_{0.4}\right)$ and $\varepsilon=0.03$. %
 The stopping condition used in all these example is $\|\bm G_k\|_F<5\times 10^{-8}$ and the approximate gradient \eqref{eq:appGradient} is computed using the standard implementation of Sinkhorn's algorithm from the Python Optimal Transport package \citep{flamary2021pot}. 

\begin{figure}[!t]
    \centering
    \includegraphics{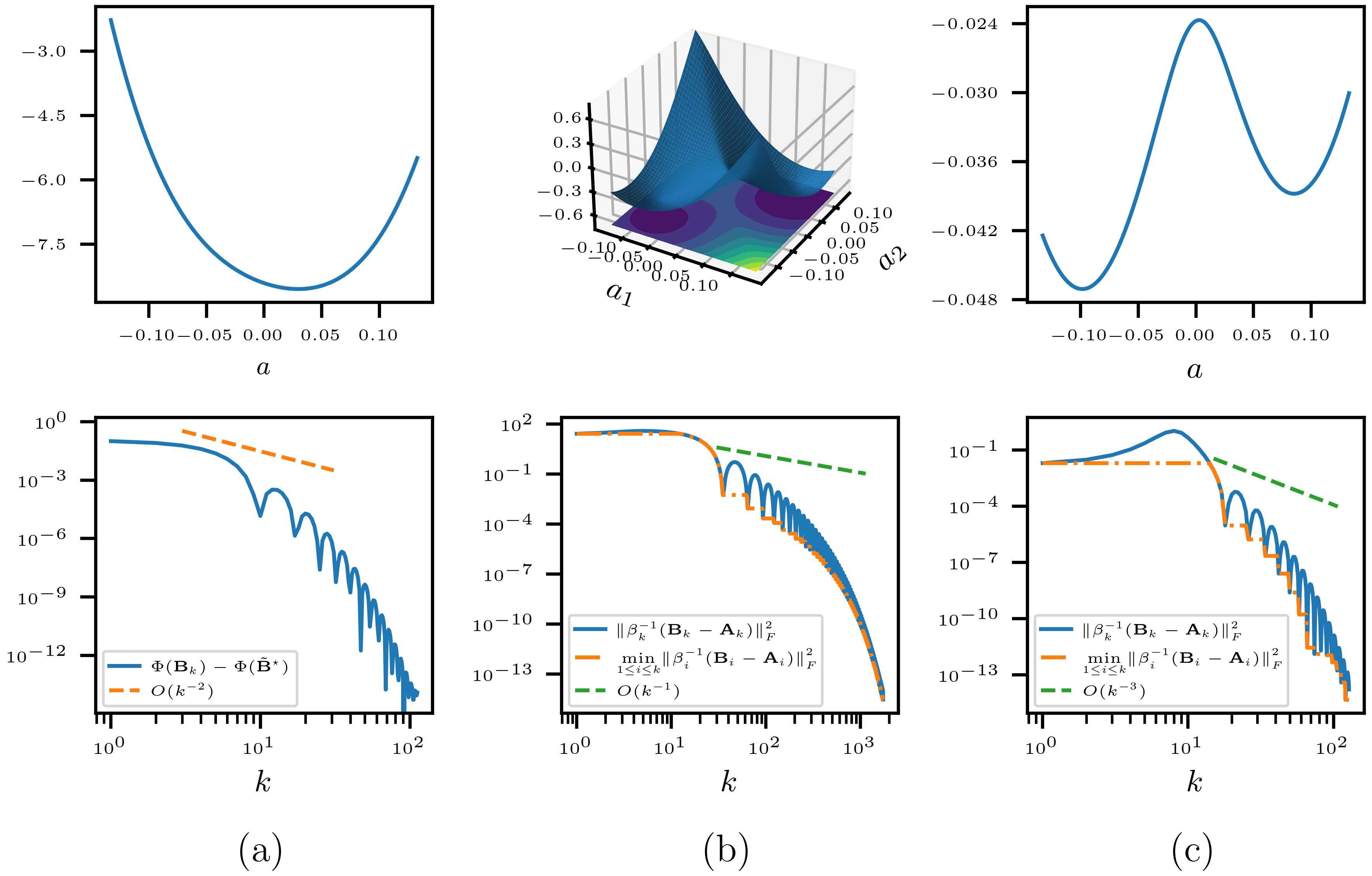}
    \caption{The top row compiles plots of $\Phi$ for the different examples described in the text. The bottom row consists of plots tracking the progress of the iterates. In (b) and (c), \cref{alg:fgmNonConvexInexact} is initialized at $\bm C_0=(1,1)\times 10^{-5}$ and $\bm C_0=1\times 10^{-5}$, respectively. %
    }
    \label{fig:CombinedFigure}
\end{figure}

\medskip

\noindent\textbf{Time complexity.} To study the time complexity of Algorithms \ref{alg:fgmInexact} and \ref{alg:fgmNonConvexInexact}, we first choose the dimension $d\in\{1,16,64,128\}$ and let $\mu_0,\mu_1\in\mathcal P(\mathbb R^{d})$ be supported on $N\in\{16,32,64,128,256,$  $512,1024,2048,$ $4096,8192,16384\}$ samples of a mean-zero normal distribution with standard deviation $0.05$ for $\mu_0$ and $0.1$ for $\mu_1$. The weights are chosen uniformly at random from $[0,1)$ and normalized so as to sum to $1$. This procedure is repeated to generate a collection of pairs of random distributions $\{(\mu_{0,i},\mu_{1,i})\}_{i=1}^{50}$. %
In the sequel, a \emph{single experiment} refers to the process of timing the computation of $\mathsf S_{\varepsilon}(\mu_{0,i},\mu_{1,i})$ for some fixed $d,N$ and all $i=1,\ldots,50$.  For practical reasons, we choose to abort an experiment before all $50$ EGW distances have been computed if %
the total runtime for this experiment exceeds $1$ hour. %
The average runtime is then computed among all completed calculations in a single experiment.

\begin{figure}[t!]
    \centering  
    \includegraphics[width=0.99\textwidth]{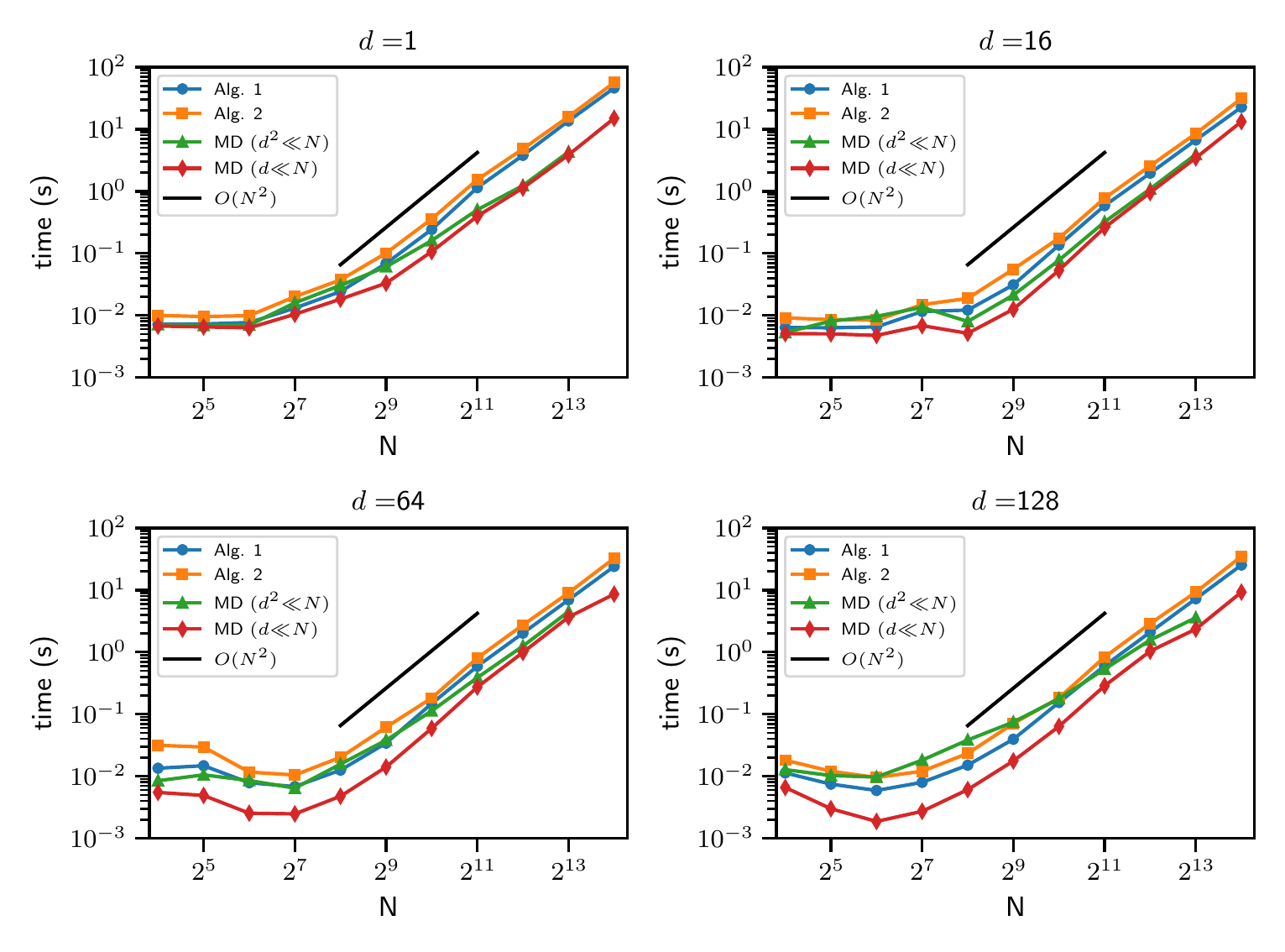}
    \caption{The various plots compile the average runtime of Algorithms \ref{alg:fgmInexact} and \ref{alg:fgmNonConvexInexact}, and two versions of the mirror descent algorithm in the convex regime for different combinations of $d$ and $N$.}%
    \label{fig:runtimeCVX}
\end{figure}

\begin{figure}[t!]
    \centering  
    \includegraphics[width=0.99\textwidth]{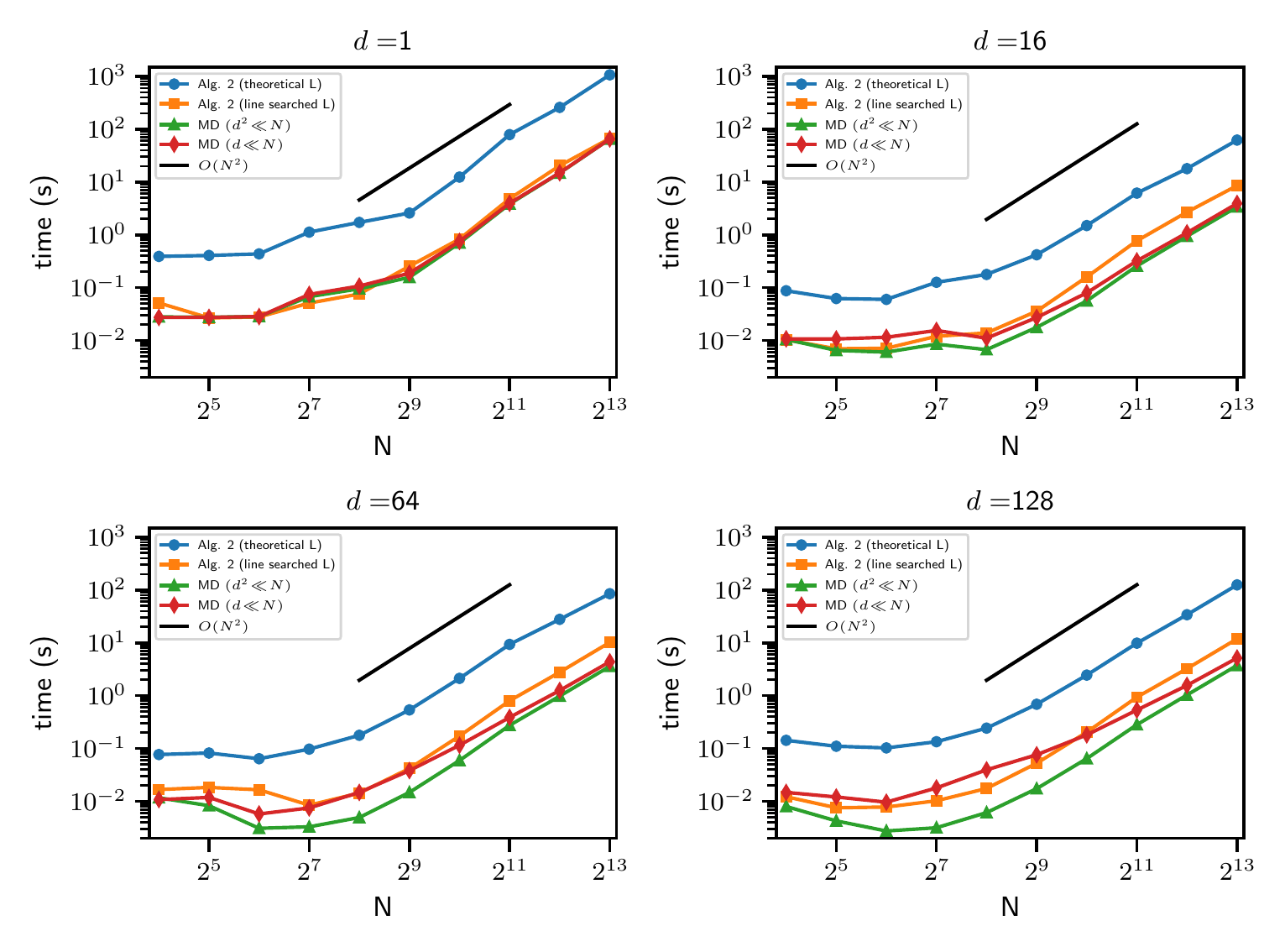}
    \caption{The various plots compile the average runtime of Algorithm  \ref{alg:fgmNonConvexInexact} with the two methods for choosing $L$, and two versions of the mirror descent algorithm in the non-convex regime for different combinations of $d$ and $N$.}%
    \label{fig:runtimeNONCVX}
\end{figure}

\medskip
\underline{The convex case:} First, $\varepsilon$ is chosen as $1.05\times 16\sqrt{M_4(\mu_0)M_4(\mu_1)}$ so as to guarantee convexity of $\Phi$ for each instance by \cref{thm:ConvexitySmoothness} and $M$ is set to $\sqrt{M_2(\mu_0)M_2(\mu_1)}+ 10^{-5}$. Figure \ref{fig:runtimeCVX} presents the average runtime of both algorithms in this setting with the  %
stopping condition %
$\|\bm G_k\|_F<10^{-6}$. We compare the performance of our methods with the two implementations of the $O(N^2)$ mirror descent algorithm provided in \citet{scetbon2022linear}.\footnote{We do not compare with the original implementation of mirror descent \citet{peyre2016gromov} or the iterative algorithm from \citet{solomon2016entropic} due to their much slower $O(N^3)$ runtime.} The first implementation includes certain algorithmic tweaks when $d^2\ll N$, whereas the second only requires $d\ll N$ to achieve the quadratic complexity. Our implementation of the mirror descent algorithm is based on the code provided in \citet{scetbon2022linear} with some small modifications (e.g., EOT couplings are computed using Sinkhorn's algorithm from the Python Optimal Transport package \citep{flamary2021pot}  and some extraneous logging features are removed) the algorithm is run until the generated couplings differ by less than $10^{-6}$ under the Frobenius norm. We note that the first version of the mirror descent algorithm encounters ``out of memory'' errors for $N=16384$. 

The plots show that the four algorithms perform similarly on the considered examples, and empirically validate the %
$O(N^2)$ computational complexity from \cref{rmk:CompComplex}. %
To verify that the algorithms all converge to solutions with similar objective values, we evaluate the relative error\footnote{Relative error is measured by 
$\max_{i\in\mathcal C(d,N)}\big|\mathsf S^{A1}_{\varepsilon}(\mu_{0,i},\mu_{1,i})- \mathsf S^{A2}_{\varepsilon}(\mu_{0,i},\mu_{1,i})\big|/\big(\mathsf S^{A1}_{\varepsilon}(\mu_{0,i},\mu_{1,i})\wedge\mathsf S^{A2}_{\varepsilon}(\mu_{0,i},\mu_{1,i})\big)$,
where $\mathsf S^{A1}_{\varepsilon}(\mu_{0,i},\mu_{1,i})$ and $ \mathsf S^{A2}_{\varepsilon}(\mu_{0,i},\mu_{1,i})$ denote the objective values achieved by the first and second algorithm of the pair, and $\mathcal C(d,N)$ is the collection of completed runs from a given experiment.} between all pairs of algorithms for each $d,N$. The largest relative error we observe is $3.3\times 10^{-6}$ for $d=1$ and, for the other choices of $d$, is at most $7.9\times 10^{-13}$. We conclude that the values obtained are in good agreement.   

\medskip

\underline{The non-convex case:} To evaluate the performance of Algorithm \ref{alg:fgmNonConvexInexact} when convexity is unknown, we set $\varepsilon$ to violate the condition of \cref{thm:ConvexitySmoothness}, but still be large enough so as to avoid numerical errors. %
If errors in running Algorithm \ref{alg:fgmNonConvexInexact} or the mirror descent methods occur, we double $\varepsilon$ until all algorithms converge without errors. The initial point $\bm C_0$ for \cref{alg:fgmNonConvexInexact} is taken as the matrix of all ones scaled by $\min\{M,1\}\times 10^{-5}$. We consider two ways of choosing the smoothness parameter $L$, which effectively dictates the rate of convergence. The first is to set $L$ equals to the theoretical upper bound from \cref{thm:ConvexitySmoothness}, i.e., $L=64\vee \left(32^2\varepsilon^{-1}\sqrt{M_4(\mu_0)M_4(\mu_1)}-64\right)$. As this choice may be too conservative in practice, we also consider setting $L$ via a line search. Namely, we fix a value for $L$ (e.g., the theoretical upper bound or an arbitrary value) and check if the algorithm converges for a given choice of $d,N,\mu_{0,i},\mu_{1,i}$. If so, we multiply $L$ by $0.99$ and repeat this procedure until the algorithm no longer converges. For each $d$ and $N$, we choose the value of $L$ that attains the fastest convergence, and repeat this procedure for 5 pairs of distributions. For Algorithm \ref{alg:fgmNonConvexInexact} with the choice of $L$ that comes from the theoretical bound and the two versions of mirror descent we follow the same methodology as in the convex case, i.e., averaging over $50$ pairs and stopping an experiment after $1$ hour. The average runtimes of all methods are reported in Figure \ref{fig:runtimeNONCVX}. The restriction to 5 runs in the line search case is only out of convenience and we note that all algorithms yield similar results if we restrict to 5 runs throughout.

The plots again validate the $O(N^2)$ time complexity for all four approaches. However, we see that choosing $L$ in Algorithm \ref{alg:fgmNonConvexInexact} according to the theoretical upper bound may indeed be too conservative, as it results in a $10\times$ or larger slowdown compared to the other methods. By setting $L$ via the line search, on the other hand, Algorithm \ref{alg:fgmNonConvexInexact} and mirror descent exhibit similar performance. This suggests that the longer runtime of Algorithm \ref{alg:fgmNonConvexInexact} with the theoretical $L$ value can be attributed to this being an overly conservative choice as opposed to a fundamental limitation of this method. Optimization routines that update $L$ at each iteration have been proposed in \citet{tseng2008accelerated,becker2011templates,nesterov2013gradient}, but require solving an additional EOT problem at each step for our application. As such, these approaches may reduce the number of iterations required for convergence, at the cost of increasing the per iteration complexity.

\medskip

\edit{
\noindent \textbf{Real-world data.} We next assess the performance of our algorithms on real-world data from the {Fashion-MNIST} dataset \citep{xiao2017fashion}. Since the ground truth EGW value is unknown, we test the performance of the algorithm in capturing the invariance of the EGW distance to isometries. As the EGW distance generally does not nullify between isomorphic mm spaces,\footnote{Indeed, %
$\mathsf{D}_{\mathsf{KL}}(\pi\|\mu_0\otimes \mu_1)\geq 0$ with equality if and only if $\pi=\mu_0\otimes \mu_1$ whereas the integral of the distance distortion cost vanishes if and only if the
coupling is induced by some isometry $T:\mathbb R^{d_0}\to \mathbb R^{d_1}$. By \cref{cor:StationaryPoint}, an optimal coupling $\pi^{\star}$ for $\mathsf S_{\varepsilon}(\mu_0,\mu_1)$ is equivalent to $\mu_0\otimes \mu_1$ (in the sense that $\pi^{\star}\ll\mu_0\otimes \mu_1$ and $\mu_0\otimes\mu_1\ll\pi^{\star}$), so $\pi^{\star}$ cannot be induced by an isometry unless $\mu_0,\mu_1$ are supported on one point.} we consider a centered/debiased version, inspired by debiased EOT (also known as Sinkhorn divergence, \citealp{feydy2018interpolating,genevay2018learning}). %
Define the debiased quadratic EGW distance between $(\mu_0,\mu_1)\in\mathcal P(\mathbb R^{d_0})\times\mathcal P(\mathbb R^{d_1})$ as
\[
\bar{\mathsf S}_{\varepsilon}(\mu_0,\mu_1)\coloneqq\mathsf {S}_{\varepsilon}(\mu_0,\mu_1)-\frac{1}{2}\big( \mathsf{S}_{\varepsilon}(\mu_0,\mu_0)+\mathsf{S}_{\varepsilon}(\mu_1,\mu_1)\big).
\]
The recentering guarantees that $\bar{\mathsf S}_{\varepsilon}$ nullifies whenever $(\mathbb R^{d_0},\|\cdot\|,\mu_0)$ and $(\mathbb R^{d_1},\|\cdot\|,\mu_1)$ are isomorphic, as desired. %

Having that, our  experiment consists of comparing a fixed image from the Fashion MNIST dataset to rotated versions of itself and other images from the dataset, by computing the debiased EGW distance between them. By doing so, we empirically validate that computation of $\bar{\mathsf S}_{\varepsilon}$ using \cref{alg:fgmNonConvexInexact} is invariant to isometric transformations on a real-world dataset. Precisely, we pad the $28\times 28$ pixel images from the dataset with zeros on all sides such that the effective image size is $40\times 40$ pixels (this guarantees that no nonzero pixels are lost upon rotation) and treat these padded images as probability distributions on a $40\times 40$ grid of points in
$\mathbb R^2$ with weights proportional to the pixel intensity value. We then compare these distributions using $\bar {\mathsf S}_{\varepsilon}$ upon removing the points with zero mass from each distribution, the values thus obtained are included in \cref{fig:Fashion_MNIST}.  
}
\begin{figure}[!htb]
    \centering
    \includegraphics[width=0.9\textwidth]{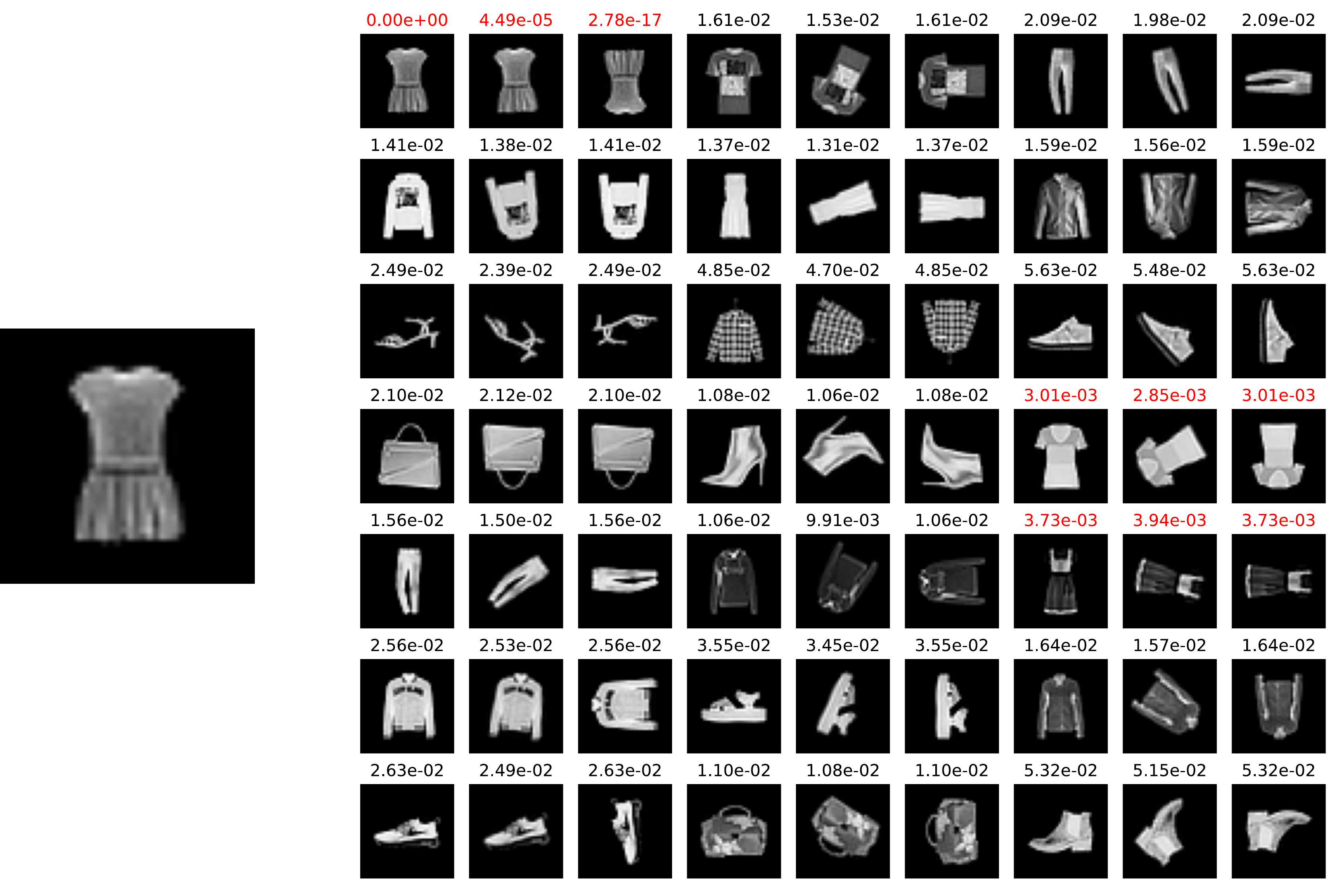}
    \caption{We compare the image on the left to the images on the right using debiased EGW with $\varepsilon=0.1$ as a figure of merit. The corresponding value of $\bar{\mathsf S}_{\varepsilon}$ is included on top of the corresponding images. The images are presented in groups of three, where the leftmost image in the group is the original image, the middle image is obtained via a random rotation of the original image, and the rightmost image is a rotation by a multiple
    of $90^{\circ}$. Distance values smaller than $5\times 10^{-3}$ are in red.}
    \label{fig:Fashion_MNIST}
\end{figure}

\edit{
We note that rotating an image by an angle which is not a multiple of $90^{\circ}$ does not correspond to an isometric action on $\mathbb R^2$, as it requires interpolating the pixel back onto the $40\times 40$ grid of points. Nevertheless, we see from \cref{fig:Fashion_MNIST}, that the images subject to random rotations and the unrotated images achieve similar values of $\bar{\mathsf S}_{\varepsilon}$ relative to the fixed reference image. The discrepancy between these two values can be thought of as a quantification of the distortion to the image structure caused by the interpolation procedure. When the rotation is a multiple of
90$^{\circ}$, we see that the values obtained are identical, as no interpolation is performed on the image.

\cref{fig:Fashion_MNIST}  marks in red values corresponding to images that are closest to the reference image in the debiased EGW distance. These images have a notable structural similarity to the reference in their overall shape and/or features (e.g., pleats on the dress). We also observe that quite naturally the images with largest discrepancy are those of shoes, which have a distinct structure, and the plaid shirt which has a different pattern. Interestingly, between these two extremes, there are images which have comparable values, but are structurally dissimilar (for instance the images in the center column except for the plaid shirt and the sandal). This behaviour demonstrates that the debiased EGW distance does not simply compare the shapes of the images, but rather takes into account the intricate interplay between the intensity values of the images.  

}

\medskip

\section{Proofs}
\label{sec:Proofs}

\subsection{Proof of \texorpdfstring{\cref{prop:PhiDerivative}}{Proposition 1}}
\label{sec:PhiDerivativeProof}

We first fix some notation. Let $S_i=\supp(\mu_i)$ for $i=0,1$ and define the Banach spaces
\begin{gather*}
    \mathfrak E=\left\{(f_0,f_1)\in\calC(S_0)\times \calC(S_1):\int f_0d\mu_0=0\right\},
    \\
    \mathfrak F=\left\{(f_0,f_1)\in\calC(S_0)\times \calC(S_1):\int f_0d\mu_0=\int f_1d\mu_1\right\}.
\end{gather*}
Consider the map $\Upsilon:\RR^{d_0\times d_1}\times \mathfrak{E}\to \calC(S_0)\times\calC(S_1)$ given by
\[
    \Upsilon:(\bm A,\varphi_0,\varphi_1)\mapsto \Bigg(\int e^{\frac{\varphi_0(\cdot)+\varphi_1(y)-c_{\bm A}(\cdot,y)}{\varepsilon}}d\mu_1(y)-1,\int e^{\frac{\varphi_0(x)+\varphi_1(\cdot)-c_{\bm A}(x,\cdot)}{\varepsilon}}d\mu_0(x)-1\Bigg).
\]
For fixed $\bm A\in\RR^{d_0\times d_1}$, the solution to the equation $\Upsilon(\bm A,\cdot,\cdot)=0$ is the unique pair of EOT potentials $(\varphi_0^{\bm A},\varphi_1^{\bm A})$ for $\mu_0,\mu_1$ with the cost $c_{\bm A}$ satisfying the normalization from $\mathfrak E$. Observe that, by compactness of $S_0$ and $S_1$, the potentials are bounded.   

\medskip
The following lemmas verify the conditions to apply the implicit mapping theorem to $\Upsilon$ in order to obtain the Fr{\'e}chet derivative of the map $\bm A\in\mathbb R^{d_0\times d_1}\mapsto (\varphi_0^{\bm A},\varphi_1^{\bm A})$. Given that $\OT_{\bm A,\varepsilon}(\mu_0,\mu_1)=\int \varphi_0^{\bm A}d\mu_0+\int \varphi_1^{\bm A}d\mu_1$, the derivative of the map $\bm A\mapsto\OT_{\bm A,\varepsilon}(\mu_0,\mu_1)$ and that of $\Phi$ itself will readily follow.   %

\begin{lemma}
    \label{lem:UpsilonDerivative}
   The map $\Upsilon$ is smooth with first derivative at $(\bm A,\varphi_0,\varphi_1)\in\RR^{d_0\times d_1}\times \mathfrak E$ given by,
   \begin{align*}
        D\Upsilon_{[\bm A,\varphi_0,\varphi_1]}(\bm B,h_0,h_1)=\varepsilon^{-1}\Bigg( &\int (h_0(\cdot)+h_1(y)+32(\cdot)^{\intercal}\bm B y)e^{\frac{\varphi_0(\cdot)+\varphi_1(y)-c_{\bm A}(\cdot,y)}{\varepsilon}}d\mu_1(y),
        \\&\hspace{1em}\int (h_0(x)+h_1(\cdot)+32x^{\intercal}\bm B (\cdot))e^{\frac{\varphi_0(x)+\varphi_1(\cdot)-c_{\bm A}(x,\cdot)}{\varepsilon}}d\mu_0(x)\Bigg),
   \end{align*} 
   where $(\bm B,h_0,h_1)\in\RR^{d_0\times d_1}\times \mathfrak E$. 
\end{lemma}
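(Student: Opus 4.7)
The plan is to recognize $\Upsilon$ as a composition of three manifestly smooth maps between Banach spaces and then apply the chain rule. Since $S_0$ and $S_1$ are compact, every Banach space in sight carries the uniform norm and all ``smoothness on compacta'' subtleties reduce to Weierstrass bounds.

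First I would factor $\Upsilon$ as $\Upsilon = \Lambda \circ \mathcal{E} \circ \Psi$, where
\[
\Psi:(\bm A,\varphi_0,\varphi_1)\mapsto u,\quad u(x,y)\coloneqq \varepsilon^{-1}\bigl(\varphi_0(x)+\varphi_1(y)-c_{\bm A}(x,y)\bigr),
\]
maps $\RR^{d_0\times d_1}\times\mathfrak{E}$ affinely into $\calC(S_0\times S_1)$, $\mathcal{E}:\calC(S_0\times S_1)\to\calC(S_0\times S_1)$ is the Nemytskii operator $u\mapsto e^u$, and $\Lambda:g\mapsto\big(\int g(\cdot,y)\,d\mu_1(y)-1,\int g(x,\cdot)\,d\mu_0(x)-1\big)$ is a bounded affine map into $\calC(S_0)\times\calC(S_1)$ (continuity under the uniform norm follows from dominated convergence, using compactness of the supports). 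The maps $\Psi$ and $\Lambda$ are affine with bounded linear parts, hence smooth, and their derivatives are immediate: $D\Psi_{[\bm A,\varphi_0,\varphi_1]}(\bm B,h_0,h_1)$ equals $\varepsilon^{-1}(h_0(x)+h_1(y)+32\,x^{\intercal}\bm By)$ (using $c_{\bm A+\bm B}-c_{\bm A}=-32\,x^{\intercal}\bm By$), while $D\Lambda$ equals the linear part of $\Lambda$.

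Next I would verify that the exponential Nemytskii operator $\mathcal{E}$ is smooth on $\calC(S_0\times S_1)$ with $D\mathcal{E}_{[u]}(v)=v\,e^{u}$. For each fixed $u$ with $\|u\|_\infty<\infty$ and $v$ of small uniform norm, the pointwise Taylor expansion $e^{u+v}=e^u+ve^u+\tfrac12 v^2 e^{\theta}$ with $|\theta|\le\|u\|_\infty+\|v\|_\infty$ gives, uniformly in $(x,y)$,
\[
\bigl\|e^{u+v}-e^u-v\,e^u\bigr\|_\infty\le \tfrac12 e^{\|u\|_\infty+\|v\|_\infty}\|v\|_\infty^{\,2}=o(\|v\|_\infty),
\]
so $\mathcal{E}$ is Fréchet differentiable with the claimed derivative; iterating, all higher Fréchet derivatives exist and are bounded multilinear maps on bounded subsets. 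Alternatively one may appeal to the standard fact that Nemytskii operators induced by entire functions are smooth on $\calC(K)$ for compact $K$.

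Finally I would compose via the chain rule. Writing $u^{\bm A,\varphi_0,\varphi_1}(x,y)=\varepsilon^{-1}(\varphi_0(x)+\varphi_1(y)-c_{\bm A}(x,y))$, the derivative $D\Upsilon_{[\bm A,\varphi_0,\varphi_1]}(\bm B,h_0,h_1)$ equals $\Lambda\bigl(e^{u^{\bm A,\varphi_0,\varphi_1}}\cdot D\Psi_{[\bm A,\varphi_0,\varphi_1]}(\bm B,h_0,h_1)\bigr)$, which, after substituting the two derivatives and recognizing $e^{u^{\bm A,\varphi_0,\varphi_1}}$ as the integrand kernel in the definition of $\Upsilon$, yields exactly the displayed formula. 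The only mildly delicate step is the uniform-in-$(x,y)$ control of the quadratic Taylor remainder for $\mathcal{E}$; this is handled by compactness of $S_0\times S_1$, which ensures $\|u\|_\infty<\infty$ and allows the constant $e^{\|u\|_\infty+\|v\|_\infty}$ to be bounded on a neighborhood of $(\bm A,\varphi_0,\varphi_1)$. Smoothness of $\Upsilon$ then follows from smoothness of its three factors.
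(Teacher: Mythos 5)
Your proposal is correct and takes essentially the same route as the paper: the paper also factors $\Upsilon$ through the affine pre-map $(\bm A,\varphi_0,\varphi_1)\mapsto\varphi_0\oplus\varphi_1-c_{\bm A}$, proves smoothness of the exponential Nemytskii operator on $\calC(S_0\times S_1)$ by a uniform Taylor-remainder bound, composes with the (linear) integration operators, and applies the chain rule. The only cosmetic differences are that you absorb the $\varepsilon^{-1}$ factor into the pre-map and split $\Lambda$ and $\mathcal E$ explicitly, whereas the paper bundles the exponential and the integrations into a single lemma before composing.
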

The proof of this result is straightforward, but included in \cref{app:UpsilonDerivative} for completeness. 
 Now, define $\xi_{\bm A}\coloneqq\varepsilon D\Upsilon_{[\bm A,\varphi^{\bm A}_0,\varphi^{\bm A}_1]}(0,\cdot,\cdot)$ and let $\pi_{\bm A}$ be the EOT coupling for $\OT_{\bm A,\varepsilon}(\mu_0,\mu_1)$. Note that for any $(h_0,h_1)\in\mathfrak E$, we have $\xi_{\bm A}(h_0,h_1)\in\mathfrak F$, which follows by recalling that $\frac{d\pi_{\bm A}}{d\mu_0\otimes \mu_1}(x,y)=e^{\frac{\varphi^{\bm A}_0(x)+\varphi^{\bm A}_1(y)-c_{\bm A}(x,y)}{\varepsilon}}$ and observing 
\begin{align*}
    \int\big(\xi_{\bm A}(h_0,h_1)\big)_1d\mu_0= \int h_0d\mu_0+\int h_1d\pi_{\bm A}=\int h_0d\mu_0+\int h_1d\mu_1\\
    \int\big(\xi_{\bm A}(h_0,h_1)\big)_2d\mu_1= \int h_0d\pi_{\bm A}+\int h_1d\mu_1=\int h_0d\mu_0+\int h_1d\mu_1.
\end{align*}
We next prove that $\xi_{\bm A}$ is an isomorphism between $\mathfrak E$ and $\mathfrak F$ by following the proof of Proposition 3.1 in \citet{Carlier2020Differential}.

\begin{lemma}
    \label{lem:BanachSpaceIsomorphism}
    The map $\xi_{\bm A}$ is an isomorphism between $\mathfrak E$ and $\mathfrak F$.
\end{lemma}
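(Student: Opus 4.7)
The plan is to write $\xi_{\bm A} = I + K$, where $K$ is a compact operator on the ambient space $\calC(S_0)\times\calC(S_1)$, and then exploit the Fredholm alternative together with a maximum-principle / conditional-expectation argument to pin down the kernel.

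First, using the Schr\"odinger system \eqref{eq:SchrodingerSystem} to simplify the diagonal terms (the integrals of the kernel $k(x,y)\coloneqq e^{(\varphi_0^{\bm A}(x)+\varphi_1^{\bm A}(y)-c_{\bm A}(x,y))/\varepsilon}$ against each marginal equal $1$), I would rewrite
\[
\xi_{\bm A}(h_0,h_1) = (h_0 + T_1 h_1,\ h_1 + T_0 h_0),
\]
where $(T_1 h_1)(x)=\int h_1(y)k(x,y)d\mu_1(y)$ and $(T_0 h_0)(y)=\int h_0(x)k(x,y)d\mu_0(x)$. These are nothing but the conditional expectations $E_{\pi_{\bm A}}[h_1(Y)\mid X=x]$ and $E_{\pi_{\bm A}}[h_0(X)\mid Y=y]$. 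Since $k$ is continuous and strictly positive on the compact set $S_0\times S_1$, a standard Arzel\`a--Ascoli argument gives that $K(h_0,h_1)\coloneqq(T_1 h_1,T_0 h_0)$ is a compact operator on $\calC(S_0)\times \calC(S_1)$.

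Second, I would compute the kernel of $I+K$ in the ambient space. If $\xi_{\bm A}(h_0,h_1)=0$, substitution gives $h_1 = T_0 T_1 h_1$, so $h_1 = E[E[h_1(Y)\mid X]\mid Y]$ in the $\pi_{\bm A}$-probability sense. Taking $L^2(\mu_1)$ norms and using the conditional Jensen inequality forces $h_1 = E[h_1(Y)\mid X]$ $\pi_{\bm A}$-a.s.; combined with $\pi_{\bm A}\sim \mu_0\otimes\mu_1$ (equivalent measures, since $k>0$) and continuity, $h_1$ must be constant, and then $h_0=-T_1 h_1$ is the opposite constant. Hence $\ker(I+K)=\mathrm{span}\{(1,-1)\}$.

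Third, I would identify the range: integrating the two equations $h_0+T_1 h_1=f_0$ and $h_1+T_0 h_0=f_1$ against $\mu_0$ and $\mu_1$ respectively (and using $\int T_1 h_1\,d\mu_0 = \int h_1\,d\mu_1$ by Fubini) gives the compatibility condition $\int f_0\,d\mu_0 = \int f_1\,d\mu_1$, i.e.\ $(f_0,f_1)\in\mathfrak F$. Conversely, since $K$ is compact, the Fredholm alternative implies $\mathrm{range}(I+K)$ is closed with codimension equal to $\dim\ker(I+K)=1$; the inclusion $\mathrm{range}(I+K)\subset\mathfrak F$ combined with $\mathrm{codim}\,\mathfrak F=1$ forces $\mathrm{range}(I+K)=\mathfrak F$.

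Finally, I would transfer this to the restriction $\xi_{\bm A}:\mathfrak E\to\mathfrak F$. Injectivity is immediate: the kernel direction $(1,-1)$ fails the constraint $\int h_0\,d\mu_0=0$ unless the constant is zero. For surjectivity, given $(f_0,f_1)\in\mathfrak F$ pick any preimage $(h_0,h_1)\in\calC(S_0)\times\calC(S_1)$ under $I+K$, and shift by the kernel vector $(c,-c)$ with $c=\int h_0\,d\mu_0$ to land in $\mathfrak E$. The bounded inverse theorem then upgrades the algebraic bijection to a topological isomorphism. The main technical point to be careful about is the $\pi_{\bm A}$-a.s.\ to pointwise passage in the kernel argument, which is where positivity and continuity of $k$ on the compact supports are used; the rest is a direct Fredholm-alternative bookkeeping.
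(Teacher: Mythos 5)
Your proposal is correct and shares the paper's overall architecture (compact perturbation of the identity, Fredholm alternative, one-dimensional kernel $\mathrm{span}\{(1,-1)\}$, then bounded inverse / open mapping), but it diverges in two places worth noting. First, the paper establishes compactness by extending $\xi_{\bm A}$ to $L^2(\mu_0)\times L^2(\mu_1)$ and proving compactness there (via weak-to-strong convergence, Eberlein--\v{S}mulian and dominated convergence), while you keep everything on $\mathcal C(S_0)\times\mathcal C(S_1)$ and invoke Arzel\`a--Ascoli. The price of the paper's choice is a regularity bootstrap at the end: a preimage $(h_0,h_1)\in E\subset L^2\times L^2$ is then shown to lie in $\mathcal C\times\mathcal C$ by reading off $(h_0,h_1)=(g_0,g_1)-\mathcal L(h_0,h_1)$ and using that $\mathcal L$ smooths $L^2$ into $\mathcal C$; your formulation sidesteps this bootstrap entirely, which is a genuine simplification. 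Second, to pin down the kernel, the paper uses a direct algebraic trick --- multiply the two defining identities by $\bar f_0$ and $\bar f_1$ respectively, integrate, and add to obtain $\int(\bar f_0+\bar f_1)^2\,d\pi_{\bm A}=0$ --- whereas you recast $T_1,T_0$ as conditional expectations under $\pi_{\bm A}$ and squeeze via the $L^2$-contraction property of conditional expectation (conditional Jensen). Both arguments hinge on the same fact, namely $\pi_{\bm A}\sim\mu_0\otimes\mu_1$; the paper's version is shorter to write, while yours makes the probabilistic structure transparent. Your restriction-and-shift argument for passing from the ambient bijection to $\xi_{\bm A}:\mathfrak E\to\mathfrak F$ is correct and matches the paper's bookkeeping in spirit.
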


\begin{proof}
    Observe that $\xi_{\bm A}$ extends naturally to a map on $L^2(\mu_0)\times L^2(\mu_1)$ and admits the representation 
    \[
        \xi_{\bm A}:(f_0,f_1)\in L^2(\mu_0)\times L^2(\mu_1)\mapsto (f_0,f_1)+\mathcal L(f_0,f_1)\in L^2(\mu_0)\times L^2(\mu_1),
    \]
    where 
    \[\mathcal L(f_0,f_1)=\left(\int f_1(y)e^{\frac{\varphi_0^{\bm A}(\cdot)+\varphi_1^{\bm A}(y)-c_{\bm A}(\cdot,y)}{\varepsilon}}d\mu_1(y),\int f_0(x)e^{\frac{\varphi_0^{\bm A}(x)+\varphi_1^{\bm A}(\cdot)-c_{\bm A}(x,\cdot)}{\varepsilon}}d\mu_0(x)\right).
    \]
    \cref{lem:integralHS} in Appendix \ref{sec: compactness} demonstrates that 
    $\mathcal L$ is a compact linear self-map of $L^2(\mu_0)\times L^2(\mu_1)$. 
    
    We first show that $\xi_{\bm A}$ is injective on $E\coloneqq\{(f_0,f_1)\in L^2(\mu_0)\times L^2(\mu_1):\int f_0d\mu_0=0\}$. 
    Suppose that $(\bar f_0,\bar f_1)$ satisfies $\xi_{\bm A}(\bar f_0,\bar f_1)=0$. Multiplying $(\xi_{\bm A}(\bar f_0,\bar f_1))_1$ by $\bar f_0$ and $(\xi_{\bm A}(\bar f_0,\bar f_1))_2$ by $\bar f_1$, we have 
    \begin{gather*}
         \int \left(\bar f_0^2(\cdot)+\bar f_0(\cdot)\bar f_1(y)\right)e^{\frac{\varphi_0^{\bm A}(\cdot)+\varphi_1^{\bm A}(y)-c_{\bm A}(\cdot,y)}{\varepsilon}}d\mu_1(y)=0,\\ 
        \int \left(\bar f_0(x)f_1(\cdot)+\bar f_1^2(\cdot)\right)e^{\frac{\varphi_0^{\bm A}(x)+\varphi_1^{\bm A}(\cdot)-c_{\bm A}(x,\cdot)}{\varepsilon}}d\mu_0(x)=0,
    \end{gather*}
    and summing these equations gives $\int(\bar f_0+\bar f_1)^2d\pi_{\bm A}=0$. As $\pi_{\bm A}$ is equivalent to $\mu_0\otimes \mu_1$, we have $\bar f_0+\bar f_1=0$ $\mu_0\otimes\mu_1$-a.e., which further implies that $(\bar f_0,\bar f_1)=(a,-a)$  $\mu_0\otimes\mu_1$-a.e. for some $a\in\R$. Consequently, $\ker(\xi_{\bm A})$ is $1$-dimensional and $\xi_{\bm A}$ is injective on $E$.

    Next, we show that $\xi_{\bm A}$ is onto $F\coloneqq\{(f_0,f_1)\in L^2(\mu_0)\times L^2(\mu_1):\int f_0d\mu_0=\int f_1d\mu_1\}$.
    As in the lead-up to this lemma, $\xi_{\bm A}(E)\subset F$. By the Fredholm alternative (cf. Theorem 6.6 in \citet{brezis2011functional}), $(\Id+\mathcal L)(L^2(\mu_0)\times L^2(\mu_1))$ has codimension $1$ and, as $F$ has codimension $1$, we must have $\xi_{\bm A}(E)=F$. 
    
    As such, for any $(g_0,g_1)\in \mathfrak F\subset F$, there exists $(h_0,h_1)\in E$ for which 
    \[
        \xi_{\bm A}(h_0,h_1)=(h_0,h_1)+\mathcal L(h_0,h_1)=(g_0,g_1). 
    \]
    As $\mathcal L(h_0,h_1)\in \calC(S_0)\times \calC(S_1)$, $(h_0,h_1)=(g_0,g_1)-\mathcal L(h_0,h_1)\in \calC(S_0)\times \calC(S_1)$ with $\int h_0d\mu_0=0$, and thus $(h_0,h_1)\in \mathfrak E$. This implies that $\xi_{\bm A}(\mathfrak E)\supset \mathfrak F$ and from before we have $\xi_{\bm A}(\mathfrak E)\subset \mathfrak F$, yielding $\xi_{\bm A}(\mathfrak E)=\mathfrak F$. We have shown that $\xi_{A}: \mathfrak E \to \mathfrak F$ is a  continuous linear bijection and hence an isomorphism by the open mapping theorem (cf. Corollary 2.7 in \citealt{brezis2011functional}).
\end{proof}

We now apply the implicit mapping theorem to obtain the Fr{\'e}chet derivative of $(\varphi_0^{(\cdot)},\varphi_1^{(\cdot)})$.

\begin{lemma}\label{lem:PotentialDerivative}
    The map $\bm A\in\RR^{d_0\times d_1}\mapsto(\varphi_0^{\bm A},\varphi_1^{\bm A})\in \mathfrak E$ is smooth with Fr{\'e}chet derivative
    \[
        D\left(\varphi^{(\cdot)}_0,\varphi^{(\cdot)}_1\right)_{[\bm A]}(\bm B)=-\left(h_0^{\bm A,\bm B},h_1^{\bm A,\bm B}\right), 
    \]
    where $\left(h_0^{\bm A,\bm B},h_1^{\bm A,\bm B}\right)\in \mathfrak E$ satisfies
    \begin{equation}
   \label{eq:hSystem}
 \begin{aligned}
      \int\left(h_0^{\bm A,\bm B}(x)+h_1^{\bm A,\bm B}(y)-32x^{\intercal}\bm By\right)e^{\frac{\varphi_0^{\bm A}(x)+\varphi_1^{\bm A}(y)-c_{\bm A}(x,y)}{\varepsilon}}d\mu_1(y)=0, \quad \forall\,x\in\supp(\mu_0),\\
    \int\left(h_0^{\bm A,\bm B}(x)+h_1^{\bm A,\bm B}(y)-32x^{\intercal}\bm By\right)e^{\frac{\varphi_0^{\bm A}(x)+\varphi_1^{\bm A}(y)-c_{\bm A}(x,y)}{\varepsilon}}d\mu_0(x)=0,  \quad\forall \,y\in\supp(\mu_1),
   \end{aligned}
   \end{equation}
   with
   $(\varphi_0^{\bm A},\varphi_1^{\bm A})$ being any pair of EOT potentials for $(\mu_0,\mu_1)$ with the cost $c_{\bm A}$.
\end{lemma}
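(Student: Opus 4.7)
The plan is to apply the implicit mapping theorem for Banach spaces to $\Upsilon$ viewed as a map from $\mathbb{R}^{d_0\times d_1} \times \mathfrak{E}$ into $\mathfrak{F}$. The hypotheses were essentially set up by the preceding two lemmas: $\Upsilon$ is smooth by \cref{lem:UpsilonDerivative}; its image lies in $\mathfrak{F}$ because for any $(\bm A, \varphi_0, \varphi_1) \in \mathbb{R}^{d_0 \times d_1} \times \mathfrak{E}$, Fubini's theorem applied to the kernel $e^{(\varphi_0 \oplus \varphi_1 - c_{\bm A})/\varepsilon}$ shows the two components of $\Upsilon(\bm A, \varphi_0, \varphi_1)$ integrate to the same constant against $\mu_0$ and $\mu_1$; by definition of the EOT potentials we have $\Upsilon(\bm A, \varphi_0^{\bm A}, \varphi_1^{\bm A}) = 0$; and the partial derivative in $(\varphi_0, \varphi_1)$ at $(\bm A, \varphi_0^{\bm A}, \varphi_1^{\bm A})$ equals $\varepsilon^{-1}\xi_{\bm A}$, which is a Banach isomorphism from $\mathfrak E$ onto $\mathfrak F$ by \cref{lem:BanachSpaceIsomorphism}.

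The implicit mapping theorem then produces a unique smooth map $\bm A \mapsto (\varphi_0^{\bm A}, \varphi_1^{\bm A}) \in \mathfrak E$ on a neighborhood of any fixed $\bm A_0$ solving $\Upsilon(\bm A_0, \cdot, \cdot)=0$, and uniqueness of the normalized Schrödinger solutions glues these local pieces into a single smooth map on $\mathbb R^{d_0 \times d_1}$. Implicit differentiation of the identity $\Upsilon(\bm A, \varphi_0^{\bm A}, \varphi_1^{\bm A}) = 0$ in $\bm A$ gives
\[
D_{\bm A}\Upsilon_{[\bm A, \varphi_0^{\bm A}, \varphi_1^{\bm A}]}(\bm B) + \varepsilon^{-1}\xi_{\bm A}\!\left(D(\varphi_0^{(\cdot)}, \varphi_1^{(\cdot)})_{[\bm A]}(\bm B)\right) = 0.
\]
Setting $(h_0^{\bm A,\bm B}, h_1^{\bm A,\bm B}) \coloneqq -D(\varphi_0^{(\cdot)}, \varphi_1^{(\cdot)})_{[\bm A]}(\bm B) \in \mathfrak E$ and plugging in the formula for $D\Upsilon$ from \cref{lem:UpsilonDerivative} (with $h_0 = h_1 = 0$ for the $\bm A$-partial) yields precisely the system \eqref{eq:hSystem} after a sign flip.

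The two equations initially hold only $\mu_0$-a.e.\ and $\mu_1$-a.e., but both sides are continuous functions of $x$ and $y$ respectively (the kernel $e^{(\varphi_0^{\bm A} + \varphi_1^{\bm A} - c_{\bm A})/\varepsilon}$ is continuous and bounded on $S_0 \times S_1$, which is compact, and $(h_0^{\bm A, \bm B}, h_1^{\bm A, \bm B}) \in \mathcal C(S_0) \times \mathcal C(S_1)$), so the identities extend to every $x \in \supp(\mu_0)$ and every $y \in \supp(\mu_1)$. Uniqueness of $(h_0^{\bm A, \bm B}, h_1^{\bm A, \bm B})$ within $\mathfrak E$ follows from injectivity of $\xi_{\bm A}$ on $\mathfrak E$. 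I do not anticipate a serious obstacle here: all the analytical heavy lifting (compactness of $\mathcal L$, the Fredholm alternative, surjectivity onto $\mathfrak F$) is already absorbed into \cref{lem:BanachSpaceIsomorphism}, and the only delicate step is bookkeeping the sign and verifying the pointwise (rather than almost-everywhere) conclusion via continuity.
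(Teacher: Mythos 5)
Your proof is correct and follows essentially the same route as the paper: invoke the implicit mapping theorem for $\Upsilon$, using the smoothness established in \cref{lem:UpsilonDerivative} and the isomorphism $\xi_{\bm A} = \varepsilon D_2\Upsilon_{\bm A} : \mathfrak E \to \mathfrak F$ from \cref{lem:BanachSpaceIsomorphism}, then read off the Fréchet derivative from the implicit differentiation formula and match signs against \eqref{eq:hSystem}. Two small remarks: your Fubini observation that $\Upsilon$ actually lands in $\mathfrak F$ is a good detail that the paper leaves tacit (it is needed for the codomain of the implicit function theorem to match the target of the isomorphism), whereas your a.e.-to-everywhere extension step is superfluous — $\Upsilon$ takes values in $\mathcal C(S_0)\times\mathcal C(S_1)$ by construction, so $\Upsilon(\bm A, g(\bm A)) = 0$ is already a pointwise identity of continuous functions on $\supp(\mu_0)$ and $\supp(\mu_1)$, with no almost-everywhere caveat to upgrade.
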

\begin{proof}
Fix $\bm A\in\RR^{d_0\times d_1}$ with corresponding EOT potentials $\left(\varphi_0^{\bm A},\varphi_1^{\bm A}\right)$. For notational convenience, define the shorthands $D_1\Upsilon_{\bm A}=D\Upsilon_{[\bm A,\varphi^{\bm A}_0,\varphi^{\bm A}_1]}(\cdot,0,0)$ and $D_2\Upsilon_{\bm A}=D\Upsilon_{[\bm A,\varphi^{\bm A}_0,\varphi^{\bm A}_1]}(0,\cdot,\cdot)$ (cf. \cref{lem:UpsilonDerivative}). 
By \cref{lem:BanachSpaceIsomorphism}, $D_2\Upsilon_{\bm A}$ is an isomorphism and we may invoke the implicit mapping theorem (cf. Theorem 5.14 in \citealt{bonnans2013perturbation}). This implies that there exists an open neighborhood $U\subset\RR^{d_0\times d_1}$ of $\bm A$ and a smooth map $g:U\to \mathfrak E$ for which $\Upsilon\big(\bm B,g(\bm B)\big)=0$ for every $\bm B\in U$ and 
\[
    Dg_{[\bm A]}(\bm B)=-(D_2\Upsilon_{\bm A})^{-1}\left(D_1\Upsilon_{\bm A}(\bm B)\right),
\]
i.e., $-Dg_{[\bm A]}(\bm B)$ solves \eqref{eq:hSystem}. By construction, $g(\bm B)=(\varphi_0^{\bm B},\varphi_1^{\bm B})$ and by repeating this~process at any $\bm A\in\RR^{d_0\times d_1}$, differentiability of the potentials is extended to the entire space~$\RR^{d_0\times d_1}$. 
\end{proof}

Given the dual form of the EOT cost, \cref{lem:PotentialDerivative} suffices to prove \cref{prop:PhiDerivative}.

\begin{proof}[Proof of \cref{prop:PhiDerivative}]
   As $\OT_{\bm A,\varepsilon}(\mu_0,\mu_1)=\int \varphi_0^{\bm A}d\mu_0+\int \varphi_1^{\bm A}d\mu_1$, \cref{lem:PotentialDerivative} implies that $\OT_{(\cdot),\varepsilon}(\mu_0,\mu_1)$ is smooth with first derivative at $\bm A\in\mathbb{R}^{d_0\times d_1}$ given by 
   \[
   D\big(\OT_{(\cdot),\varepsilon}(\mu_0,\mu_1)\big)_{[\bm A]}(\bm B)=-\int h_0^{\bm A,\bm B}d\mu_0-\int h_1^{\bm A,\bm B}d\mu_1,  
   \] 
   where $\bm B\in\mathbb R^{d_0\times d_1}$.
Integrating the first equation in \eqref{eq:hSystem} w.r.t. $\mu_0$ while using $\frac{d\pi_{\bm A}}{\mu_0\otimes\mu_1}(x,y)=e^{\frac{\varphi_0^{\bm A}(x)+\varphi_1^{\bm A}(y)-c_{\bm A}(x,y)}{\varepsilon}}$, yields
\begin{equation}
\label{eq:h0h1mean}
\int \left(h_0^{\bm A,\bm B}(x)+h_1^{\bm A,\bm B}(y)\right)d\pi_{\bm A}(x,y)=\int h_0^{\bm A,\bm B}d\mu_0+\int h_1^{\bm A,\bm B}d\mu_1=32\int x^{\intercal} \bm B y \,d\pi_{\bm A}(x,y) ,
\end{equation} 
whence
\[
D\big(\OT_{(\cdot),\varepsilon}(\mu_0,\mu_1)\big)_{[\bm A]}(\bm B)=-32\int x^{\intercal}\bm By \,d\pi_{\bm A}(x,y).
\] 
As $\|\bm A\|_{F}^2=\Tr(\bm A^\intercal \bm A)$, we have $D\big(32\|\bm \cdot\|^2_F\big)_{[\bm A]}(\bm B)=64\Tr(\bm A^\intercal \bm B)$, which together with the display above yields
\[
D\Phi_{[\bm A]}(\bm B)=64\,\Tr(\bm A^{\intercal}\bm B)-32\int x^{\intercal}\bm B y \,d\pi_{\bm A}(x,y),
\]
as desired.

\medskip

For the second-order derivative,  %
recall from \eqref{eqn:EOTcoupling} that $\frac{d\pi_{\bm A}}{d\mu_0\otimes\mu_1}(x,y)=e^{\frac{\varphi_0^{\bm A}(x)+\varphi_1^{\bm A}(y)-c_{\bm A}(x,y)}{\varepsilon}}$. As in the proof of \cref{lem:UpsilonDerivative}, as the map 
\[
\bm A\in\RR^{d_0\times d_1}\mapsto \left( (x,y)\in S_0\times S_1\mapsto\varphi_0^{\bm A}(x)+ \varphi_1^{\bm A}(y)-c_{\bm A}(x,y)\right)\in\mathcal C(S_0\times S_1)
\]
is differentiable at $\bm A\in\RR^{d_0\times d_1}$ with derivative 
\[
\bm C\in\RR^{d_0\times d_1}\mapsto\left((x,y)\in S_0\times S_1\mapsto -\left(h_0^{\bm A,\bm C}(x)+ h_1^{\bm A,\bm C}(y)-32 x^{\intercal}\bm Cy\right)\right)\in\mathcal C(S_0\times S_1), 
\]
the expansion
\[
\frac{d\pi_{\bm A+\bm C}}{d\mu_0\otimes\mu_1}(x,y)-\frac{d\pi_{\bm A}}{d\mu_0\otimes\mu_1}(x,y)=-\varepsilon^{-1}z_{\bm A,\bm C}(x,y)\frac{d\pi_{\bm A}}{d\mu_0\otimes\mu_1}(x,y)+R_{{\bm C}}(x,y),
\]
holds uniformly over $(x,y)\in S_0\times S_1$, where $R_{{\bm C}}(x,y)=o(\bm C)$ as $\|\bm C\|_{F}\to 0$ and $z_{\bm A,\bm C}(x,y)=h_0^{\bm A,\bm C}(x)+ h_1^{\bm A,\bm C}(y)-32 x^{\intercal}\bm Cy$. Thus,  
\begin{align*} 
&\sup_{\|\bm B\|_{F}=1}\frac{\left|
\int x^{\intercal}\bm B y \,d\pi_{\bm A+\bm C}(x,y)-\int x^{\intercal}\bm B y \,d\pi_{\bm A}(x,y)-\varepsilon^{-1}\int x^{\intercal}\bm B yz_{\bm A,\bm C}(x,y)d\pi_{\bm A}(x,y)\right|}{\|\bm C\|_{F}}
\\
=&\sup_{\|\bm B\|_{F}=1}\left|\int x^{\intercal}\bm B y \|\bm C\|_{F}^{-1}R_{\bm C}(x,y)d\mu_0\otimes \mu_1(x,y) \right|
\\
&\leq \sup_{(x,y)\in S_1\times S_2}\|x\|\|y\|\int \|\bm C\|_{F}^{-1}\left|R_{\bm C}(x,y)\right|d\mu_0\otimes \mu_1(x,y).
\end{align*}
As $R_{{\bm C}}(x,y)=o(\bm C)$, this final term converges to $0$ as $\|\bm C\|_{F}\to 0$,
so 
\[
D^2\big(\OT_{(\cdot),\varepsilon}(\mu_0,\mu_1)\big)_{[\bm A]}(\bm B,\bm C)=32\varepsilon^{-1}\int x^{\intercal}\bm By\left(h_0^{\bm A,\bm C}(x)+h_1^{\bm A,\bm C}(y)-32x^{\intercal}\bm Cy\right)d\pi_{\bm A}(x,y).
\]

As $D\big(32\| \cdot\|^2_F\big)_{[\bm A]}(\bm B)=64\Tr(\bm A^\intercal \bm B)$, $D^2\big(32\| \cdot\|^2_F\big)_{[\bm A]}(\bm B,\bm C)=64\Tr(\bm C^\intercal \bm B)$. Altogether, 
\[
     D^2\Phi_{[\bm A]}(\bm B,\bm C)=64\,\Tr(\bm B^{\intercal}\bm C)+32\varepsilon^{-1}\int x^{\intercal}\bm B y \left(h_0^{\bm A,\bm C}(x)+h_1^{\bm A,\bm C}(y)-32x^{\intercal}\bm Cy\right)d\pi_{\bm A}(x,y).
\]
Coercivity of $\Phi$ is due to nonnegativity of the KL divergence along with the Cauchy-Schwarz inequality,
\begin{align*}
    \OT_{\bm A,\varepsilon}(\mu_0,\mu_1)&=\inf_{\pi\in\Pi(\mu_0,\mu_1)}\left\{\int -4\|x\|^2\|y\|^2-32x^{\intercal}\bm A y \,d\pi(x,y)+\varepsilon\KL(\pi\|\mu_0\otimes\mu_1)\right\},
    \\
    &\geq \inf_{\pi\in\Pi(\mu_0,\mu_1)}\left\{\int -4\|x\|^2\|y\|^2-32\|\bm A\|_F\|x\|\|y\|d\pi(x,y)\right\}
    \\
    &\geq -4\sqrt{M_4(\mu_0)M_4(\mu_1)}-32\|\bm A\|_F\sqrt{M_2(\mu_0)M_2(\mu_1)},
\end{align*}
such that $\Phi(\bm A)=32\|\bm A\|^2_{F}+\OT_{\bm A,\varepsilon}(\mu_0,\mu_1)\to +\infty$ as $\|\bm A\|_F\to \infty$.
\end{proof}

\subsection{Proof of \texorpdfstring{\cref{cor:StationaryPoint}}{Corollary 1}}
\label{subsec:proofCorStationary}
\underline{Item (i)}. 
The expression for the stationary points follows immediately from \cref{prop:PhiDerivative}. To see that all stationary points are elements of $\mathcal D_{M_{\mu_0,\mu_1}}$, observe that if $\bm A$ is a stationary point, then
\[
    \|\bm A\|_F=\frac{1}{2}\left\|\int xy^{\intercal} \,d\pi_{\bm A}(x,y)\right\|\leq\frac{1}{2}\int \|x\|\|y\| d\pi_{\bm A}(x,y) \leq \frac{1}{2}\sqrt{M_{2}(\mu_0)M_2(\mu_1)},  
\]
where the first inequality is due to Jensen's inequality, and the second is due to the Cauchy-Schwarz inequality.

\underline{Item (ii)}. As discussed in \cref{subsubsec:EGWQuadCost}, if $\pi_{\star}$ is optimal for $\sS_{\varepsilon}$ then  $\frac{1}{2}\int xy^{\intercal}\,d\pi_{\star}(x,y)$ minimizes $\Phi$. On the other hand, if $\bm{A}$ minimizes $\Phi$, then we have  $\bm{A}=\frac{1}{2}\int xy^{\intercal}\,d\pi_{\bm{A}}$ and hence 
\begin{align*}
    \sS^{2}_{\varepsilon}(\mu_0,\mu_1)&=
    8\left\|\int x y^{\intercal }\,d\pi_{\bm{A}}(x,y)\right\|_F^2
    -4\int \|x\|^2\|y\|^2d\pi_{\bm{A}}(x,y)
    \\
    &
    \quad -32\left\langle\frac{1}{2}\int x y^{\intercal }\,d\pi_{\bm{A}},\int x y^{\intercal }\,d\pi_{\bm{A}}\right\rangle_F+\varepsilon\KL(\pi_{\bm{A}}||\mu_0\otimes \mu_1)
   \\ 
    &=-4\int \|x\|^2\|y\|^2d\pi_{\bm{A}(x,y)}-8\left\|\int xy^{\intercal} \,d\pi_{\bm{A}}(x,y) \right\|^2_F+\varepsilon\KL(\pi_{\bm{A}}||\mu_0\otimes \mu_1). 
\end{align*}
By \eqref{eq:EGWDecomp}, 
\begin{equation}
\label{eq:interproofcorr}
\begin{aligned}
\mathsf S_{\varepsilon}(\mu_0,\mu_1)&=\mathsf S_{\varepsilon}(\mu_0,\mu_1)+\mathsf S^{2}_{\varepsilon}(\mu_0,\mu_1)
\\
&=\int\left|\|x-x'\|^2-\|y-y'\|^2\right|^2 +2\|x-x'\|^2\|y-y'\|^2 d\pi_{\bm{A}}\otimes\pi_{\bm{A}} (x,y,x',y')
\\&\quad -4\int \|x\|^2\|y\|^2d\mu_0\otimes \mu_1(x,y)-4\int \|x\|^2\|y\|^2d\pi_{\bm{A}}(x,y)
\\
&\quad -8\left\|\int xy^{\intercal} \,d\pi_{\bm{A}}(x,y) \right\|^2_F+\varepsilon\KL(\pi_{\bm{A}}||\mu_0\otimes \mu_1).
\end{aligned}
\end{equation}
As $\|x-x'\|^2\|y-y'\|^2=\left(\|x\|^2-2x^{\intercal} x'+\|x'\|^2\right)\left(\|y\|^2-2y^{\intercal} y'+\|y'\|^2\right)$, we have
\begin{align*}
 &\int\|x-x'\|^2\|y-y'\|^2 d\pi_{\bm{A}}\otimes\pi_{\bm{A}} (x,y,x',y')\\
 &=2\int\|x\|^2\|y\|^2d\mu_0\otimes \mu_1(x,y)+2\int\|x\|^2\|y\|^2d\pi_{\bm{A}}(x,y)   
\\
&\quad +4\int x^{\intercal}x'y^{\intercal}y'\,d\pi_{\bm{A}}\otimes\pi_{\bm{A}} (x,y,x',y'),
\end{align*}
which, together with \eqref{eq:interproofcorr} yields  
\[
\mathsf S_{\varepsilon}(\mu_0,\mu_1)=\int\left|\|x-x'\|^2-\|y-y'\|^2\right|^2 d\pi_{\bm{A}}\otimes\pi_{\bm{A}} (x,y,x',y')+\varepsilon\KL(\pi_{\bm{A}}||\mu_0\otimes \mu_1), 
\]
proving optimality of $\pi_{\bm{A}}$.

\underline{Item (iii)}. 
Suppose $\sS_{\varepsilon}$ admits a unique optimal coupling. If two matrices $\bm{A}$ and $\bm{B}$ minimize $\Phi$, then $\pi_{\bm{A}} = \pi_{\bm{B}}$ by uniqueness, so  $\bm{A} = \frac{1}{2} \int xy^\intercal \,d\pi_{\bm{A}}(x,y) = \frac{1}{2} \int xy^\intercal \,d\pi_{\bm{B}}(x,y) = \bm{B}$.  Conversely, suppose $\Phi$ admits a unique minimizer $\bm{A}^\star$. If $\pi$ is optimal for $\sS_{\varepsilon}$, then $\pi$ solves the EOT problem $\OT_{\bm{A}^\star,\varepsilon}(\mu_0,\mu_1)$,  so $\pi=\pi_{\bm{A}^\star}$. 
\qed

\subsection{Proof of \texorpdfstring{\cref{cor:HessianMinimalEigenvalue}}{Corollary 2}} 
\label{subsec:ProofCorMinEig}

We first prove Item (i). The minimal eigenvalue of $D^2\Phi_{[\bm A]}$ is given in variational form as 
    \begin{align*} 
        &\inf_{\|\bm C\|_F=1} D^2\Phi_{[\bm A]}(\bm C,\bm C)
        \\
        &
        = \inf_{\|\bm C\|_F=1}\left\{64\|\bm C\|_F^2+32\varepsilon^{-1}\int x^{\intercal}\bm C y \left(h_0^{\bm A,\bm C}(x)+h_1^{\bm A,\bm C}(y)-32x^{\intercal}\bm Cy\right)d\pi_{\bm A}(x,y)\right\} 
        \\
        &\ge 64+32\varepsilon^{-1}\inf_{\|\bm C\|_F=1}\left\{\int x^{\intercal}\bm C y \left(h_0^{\bm A,\bm C}(x)+h_1^{\bm A,\bm C}(y)-32x^{\intercal}\bm Cy\right)d\pi_{\bm A}(x,y)\right\},
    \end{align*}
    using the formula for $D^2\Phi_{[\bm A]}$ from \cref{prop:PhiDerivative}.
    Recall that $(h_0^{\bm A,\bm C},h_1^{\bm A,\bm C})$ satisfy
  \begin{align*}
        \int\left(h_0^{\bm A,\bm C}(x)+h_1^{\bm A,\bm C}(y)-32x^{\intercal}\bm Cy\right)e^{\frac{\varphi_0^{\bm A}(x)+\varphi_1^{\bm A}(y)-c_{\bm A}(x,y)}{\varepsilon}}d\mu_1(y)=0, \quad \forall\,x\in\supp(\mu_0),\\
    \int\left(h_0^{\bm A,\bm C}(x)+h_1^{\bm A,\bm C}(y)-32x^{\intercal}\bm Cy\right)e^{\frac{\varphi_0^{\bm A}(x)+\varphi_1^{\bm A}(y)-c_{\bm A}(x,y)}{\varepsilon}}d\mu_0(x)=0,  \quad\forall \,y\in\supp(\mu_1),
   \end{align*}
   such that, multiplying the top equation by $h_0^{\bm{A},\bm C}$ and integrating w.r.t. $\mu_0$ and performing the same operations on the lower equation with $h_1^{\bm A,\bm C}$ and $\mu_1$ respectively, 
   \begin{align*}
      \int\left[\left(h_0^{\bm A,\bm C}(x)\right)^2+h_1^{\bm A,\bm C}(y)h_0^{\bm A,\bm C}(x)-32x^{\intercal}\bm Cyh_0^{\bm A,\bm C}(x)\right]d\pi_{\bm A}(x,y)=0,\\
    \int\left[h_0^{\bm A,\bm C}(x)h_1^{\bm A,\bm C}(y)+\left(h_1^{\bm A,\bm C}(y)\right)^2-32x^{\intercal}\bm Cyh_1^{\bm A,\bm C}(y)\right]d\pi_{\bm A}(x,y)=0.
   \end{align*}
   Summing these equations gives 
   \[
   32\int x^{\intercal}\bm Cy\left(h_0^{\bm A,\bm C}(x)+h_1^{\bm A,\bm C}(y)\right)d\pi_{\bm A}(x,y)=\int \left(h_0^{\bm A,\bm C}(x)+h_1^{\bm A,\bm C}(y)\right)^2d\pi_{\bm A}(x,y),
   \]
   such that 
   \begin{align*}
       &32\int x^{\intercal}\bm C y \left(h_0^{\bm A,\bm C}(x)+h_1^{\bm A,\bm C}(y)-32x^{\intercal}\bm Cy\right)d\pi_{\bm A}(x,y)
       \\
       &= \int \left(h_0^{\bm A,\bm C}(x)+h_1^{\bm A,\bm C}(y)\right)^2d\pi_{\bm A}(x,y)-32^2\int (x^{\intercal}\bm C y)^2d\pi_{\bm A}(x,y),
   \end{align*}
which proves the first part of Item (i). %
 It remains to show that 
    \[
        \int \left(h_0^{\bm A,\bm C}(x)+h_1^{\bm A,\bm C}(y)\right)^2d\pi_{\bm A}(x,y)-32^2\int (x^{\intercal}\bm C y)^2d\pi_{\bm A}(x,y)\geq -32^2\Var_{\pi_{\bm A}}[X^{\intercal}\bm C Y]. 
    \]
   By Jensen's inequality, we have 
   \begin{align*}
        \int \left(h_0^{\bm A,\bm C}(x)+h_1^{\bm A,\bm C}(y)\right)^2d\pi_{\bm A}(x,y)&\geq\left( \int h_0^{\bm A,\bm C}(x)+h_1^{\bm A,\bm C}(y)d\pi_{\bm A}(x,y)\right)^2
        \\
        &= 32^2\left( \int x^{\intercal}\bm C y\,d\pi_{\bm A}(x,y)\right)^2, 
   \end{align*}
    where the equality follows from \eqref{eq:h0h1mean}, proving the desired inequality.    

    To prove the uniform bound on the variance in Item (i), observe that
    \begin{align*}
    \sup_{\|\bm C\|_F=1}\Var_{\pi_{\bm A}}[X^{\intercal}\bm C Y]
    &\leq\sup_{\|\bm C\|_F=1} \EE_{\pi_{\bm A}}[(X^{\intercal}\bm C Y)^2]
    \\
    &\leq  \sup_{\|\bm C\|_F=1}\|\bm C\|_F^2\int \|x\|^2\|y\|^2d\pi_{\bm A}(x,y),
    \\
    & \leq \sqrt{M_4(\mu_0)M_4(\mu_1)} 
   \end{align*}
    where the final two inequalities follow from the Cauchy-Schwarz inequality. 

    We now prove the upper bound on the maximum eigenvalue of $D^2 \Phi_{[\bm A]}$ from Item (ii) again using its variational characterization,  
\[
\lambda_{\max}\left(D^2\Phi_{[\bm A]}\right)=\sup_{\|\bm C\|_{F}=1}D^2\Phi_{[\bm A]}(\bm C,\bm C)=64+\lambda_{\max}\left(D^2\OT_{(\cdot),\varepsilon}(\mu_0,\mu_1)_{[\bm A]}\right).
\]
Observe that $\OT_{\bm A,\varepsilon}(\mu_0,\mu_1)=\inf_{\pi\in\Pi(\mu_0,\mu_1)}g(\bm A,\pi,\mu_0,\mu_1,\varepsilon)$, where $g$ depends on $\bm A$ only through the term $32\Tr(\bm A^{\intercal}\int xy^{\intercal}\,d\pi(x,y))$ which is linear in $\bm A$. It follows from, e.g.,  Proposition 2.1.2 in  \citet{hiriart2004fundamentals} that $\OT_{(\cdot),\varepsilon}(\mu_0,\mu_1)$ is concave. As such, $\lambda_{\max}\left(D^2\OT_{(\cdot),\varepsilon}(\mu_0,\mu_1)_{[\bm A]}\right)\leq 0$, so $\lambda_{\max}\left(D^2\Phi_{[\bm A]}\right)\leq 64$.
\qedsymbol

\subsection{Proof of \texorpdfstring{\cref{thm:ConvexitySmoothness}}{Theorem 1}}
\label{subsec:proofTheorem1}

    We first discuss the convexity properties of $\Phi$. By \cref{cor:HessianMinimalEigenvalue}, $\lambda_{\min}\left(D^2\Phi_{[\bm A]}+\frac{\rho}{2}\|\bm A\|_F^2\right)\geq 64-32^2\varepsilon^{-1}\sqrt{M_4(\mu_0)M_4(\mu_1)} +\rho$
     for any $\bm A\in\mathbb R^{d_0\times d_1}$ and $\rho\geq 0$. When this lower bound is nonnegative, $\Phi$ is $\rho$-weakly convex on $\mathbb R^{d_0\times d_1}$ by definition; recall \cref{footnote:weakConvLSmooth}. It follows that $\Phi$ is always $\rho$-weakly convex for $\rho=32^2\varepsilon^{-1}\sqrt{M_4(\mu_0)M_4(\mu_1)}-64$. Moreover, if $\sqrt{M_4(\mu_0)M_4(\mu_1)}<\frac{\varepsilon}{16}$, then $\lambda_{\min}\left(D^2\Phi_{[\bm A]}\right)>0$ such that $\Phi$ is strictly convex.    

    $L$-smoothness of $\Phi$ follows from the mean value inequality (see e.g. Example 2 p. 356 in \citealt{apostol1974mathematical}) 
    \begin{align*}    
       \|D\Phi_{[\bm A]}-D\Phi_{[\bm B]}\|_{F}&
       \leq\sup_{\bm C\in [\bm A,\bm B]}\sup_{\|\bm E\|_F=1} \left|D^2\Phi_{[\bm C]}\left(\bm A-\bm B,\bm E\right)\right|,
       \\
       &\leq \sup_{\bm C\in [\bm A,\bm B]}\left(\left|\lambda_{\min}\left(D^2\Phi_{[\bm C]}\right)\right|\vee\left|\lambda_{\max}\left(D^2\Phi_{[\bm C]}\right)\right|\right)\|\bm A-\bm B\|_{F},
    \end{align*} 
    for any $\bm A,\bm B\in\RR^{d_0\times d_1}$, where $[\bm A,\bm B]$ denotes the line segment connecting $\bm A$ and $\bm B$. The claimed result then follows by noting that, for any $\bm A,\bm B\in\mathcal D_M$, $[\bm A,\bm B]\subset \mathcal D_M$ by convexity and the supremum over $\mathcal D_M$ is achieved by compactness and the fact that the objective is continuous. Indeed, the maps $\lambda_{\max}(\cdot),\lambda_{\min}(\cdot)$ are continuous on the space of symmetric matrices, and $D^{2}\Phi_{[\cdot]}$ is continuous as $\Phi$ is smooth.    \qed

\subsection{Proof of \texorpdfstring{\cref{thm:fgmRates}}{Theorem 9}}
\label{sec:ProoffgmRates}

In this section, we show that Theorem 2.2 in \citet{Aspremont2008smooth} on the convergence rate of \cref{alg:fgmInexact} is applicable in our setting. We particularize their result to a fixed prox-function $d=\frac{1}{2}\|\cdot\|^2_{F}$ which is smooth and $1$-strongly convex.    

First, we justify the expressions for the iterates $\bm B_k,\bm C_k$ in \cref{alg:fgmInexact}, which are defined in  \citet{Aspremont2008smooth} as the  proximal operators   
\begin{gather*}
        \bm B_k=\argmin_{\bm V\in \cD_M}\left\{\Tr\left(\bm G_k^{\intercal}\bm V\right)+\frac{L}{2}\left\|\bm V-\bm A_k\right\|^2_F\right\},
        \\
        \bm C_k=\argmin_{\bm V\in \cD_M}\left\{\Tr\left(\bm W_k^{\intercal}\bm V\right)+\frac{L}{2}\left\|\bm V\right\|^2_F\right\}. 
    \end{gather*}
Rearranging terms, both problems can be written, equivalently, as 
    \begin{equation}
    \label{eq:genericProj}
        \argmin_{\bm V\in \cD_M}\left\{\left\|\bm V-\bm U\right\|_F^2\right\},
    \end{equation}
    for $\bm U=\bm A_k-L^{-1}\bm G_k$ 
 and $\bm U=-L^{-1}\bm W_k$ for the $\bm B_k$ and $\bm C_k$ iterations respectively. The solution of \eqref{eq:genericProj} is given by $\bm V=\bm U$ if $\bm V=\bm U\in\mathcal D_M$, and $\frac{M}{2}\bm U/\|\bm U\|_F$ otherwise.

Next, we show that our notion of $\delta$-oracle yields a $\delta'$-approximate gradient in the sense of Equation (2.3) in \citet{Aspremont2008smooth}. Precisely, we prove that  
\begin{equation}
\label{eq:deltapOracle}
    \left|\Tr\left( \left(\widetilde D\Phi_{[\bm A]}-D\Phi_{[\bm A]}\right)^{\intercal}\left(\bm B-\bm C\right)\right)\right|\leq \delta',
\end{equation}
for any $\bm A,\bm B,\bm C\in\mathcal D_M$. By the Cauchy-Schwarz inequality, 
\[
\left|\Tr\left( \left(\widetilde D\Phi_{[\bm A]}-D\Phi_{[\bm A]}\right)^{\intercal}\left(\bm B-\bm C\right)\right)\right|\leq M \left\|\widetilde D\Phi_{[\bm A]}-D\Phi_{[\bm A]}\right\|_{F}. 
\]
Recall that   
\[
 \widetilde D\Phi_{[\bm A]}-D\Phi_{[\bm A]}=32\sum_{\substack{1\leq i \leq N_0\\1\leq j\leq N_1}}x^{(i)}\left(y^{(j)}\right)^{\intercal}\left( \widetilde {\bm \Pi}_{ij}^{\bm A}-\bm \Pi_{ij}^{\bm A}\right),   
\]
where $\left\|\widetilde {\bm \Pi}^{\bm A}-\bm \Pi^{\bm A}\right\|_{\infty}<\delta$ uniformly in $\bm A\in\mathcal D_M$ by the $\delta$-oracle assumption such that 
\begin{equation}    \label{eq:AppGradientDiff}
\left\|\widetilde D\Phi_{[\bm A]}-D\Phi_{[\bm A]}\right\|_{F}\leq 32\left\|\widetilde {\bm \Pi}^{\bm A}-\bm \Pi^{\bm A}\right\|_{\infty}\sum_{\substack{1\leq i \leq N_0\\1\leq j\leq N_1}}\left\|x^{(i)}\left(y^{(j)}\right)^{\intercal}\right\|_F<32\delta \sum_{\substack{1\leq i \leq N_0\\1\leq j\leq N_1}}\left\|x^{(i)}\right\|\left\|y^{(j)}\right\|.
\end{equation}
 Combining the displayed equations yields 
\[
\left|\Tr\left( \left(\widetilde D\Phi_{[\bm A]}-D\Phi_{[\bm A]}\right)^{\intercal}\left(\bm B-\bm C\right)\right)\right|\leq 32M\delta\sum_{\substack{1\leq i \leq N_0\\1\leq j\leq N_1}}\left\|x^{(i)}\right\|\left\|y^{(j)}\right\|  
=\delta',
\]
proving \eqref{eq:deltapOracle}.

With these preparations \cref{thm:fgmRates} follows from Theorem 2.2 in \citet{Aspremont2008smooth} and the discussion following its proof, noting that $\sum_{i=0}^k\frac{i+1}{2}=\frac{(k+1)(k+2)}{4}$. 
\qed

\subsection{Proof of \cref{cor:adaptRatesGradient}}
\label{sec:proofCorAdaptRatesGradient}
As $\bm A_k,\bm B_k$ be iterates from \cref{alg:fgmNonConvexInexact} with $\bm B_k\in\interior(\mathcal D_M)$ such that $\bm B_k=\bm A_k-\beta_k \widetilde D \Phi_{[\bm A_k]}$  by definition. By the triangle inequality,
\[
    \|D\Phi_{[\bm A_k]}\|_{F}\leq \|D\Phi_{[\bm A_k]}-\widetilde D\Phi_{[\bm A_k]}\|_{F}+\|\widetilde D\Phi_{[\bm A_k]}\|_F=\|D\Phi_{[\bm A_k]}-\widetilde D\Phi_{[\bm A_k]}\|_{F}+\|\beta_k^{-1}\left(\bm B_k-\bm A_k\right)\|_F.
\] 
\eqref{eq:AppGradientDiff} further yields 
\[
\|D\Phi_{[\bm A_k]}-\widetilde D\Phi_{[\bm A_k]}\|_{F}<32 \delta\sum_{\substack{1\leq i \leq N_0\\1\leq j\leq N_1}}\|x^{(i)}\|\|y^{(j)}\|, 
\]
proving the claim.
\qed

\subsection{Proof of \texorpdfstring{\cref{prop:UnregularizedSolutions}}{Proposition 23} }
\label{sec:proofUnregularized}

\edit{
Let $S_i=\supp(\mu_i)$ for $i=0,1$. Recall that $S_0,S_1$ are compact by assumption. The proof of \cref{prop:UnregularizedSolutions} follows by verifying optimality conditions for minimizing locally Lipschitz functions 
using the Clarke subdifferential \citep{clarke1990optimization}.  To this end, we first verify that the objective $\Phi_0$ is locally Lipschitz and prove several auxiliary results to characterize the Clarke subdifferential $\partial \Phi_0$, recalling that the Clarke subdifferential of a locally Lipschitz function $f:\mathbb R^{d_0\times d_1}\to \mathbb R$ at a point $\bm A\in\mathbb R^{d_0\times d_1}$ is given by (cf. e.g. Theorem 2.5.1 in \citealt{clarke1990optimization}) 
\begin{equation}
\label{eq:subdifferentialCharacterization} 
\partial f(x)=\conv\left(\left\{ \lim_{n\to\infty}Df_{[\bm A_n]}: U \not\ni \bm A_n\to\bm A   
   \right\}\right),
\end{equation}
where $\conv(A)$ denotes the convex hull of the set $A$, $U\subset \mathbb R^{d_0\times d_1}$ denotes a set of full measure on which $f$ is differentiable, the existence of which is guaranteed by Rademacher's theorem, and we tacitly restrict this definition to convergent sequences of derivatives.

\begin{lemma}
    \label{lem:unregularizedLipschitz}
    The function $\bm A\in\mathbb R^{d_0\times d_1}\mapsto \Phi_0(\bm A)$ is locally Lipschitz continuous and coercive. 
\end{lemma}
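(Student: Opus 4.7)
My plan is to handle coercivity and local Lipschitz continuity separately, exploiting that $\Phi_0(\bm A)=32\|\bm A\|_F^2+\mathsf{OT}_{\bm A,0}(\mu_0,\mu_1)$ is a sum of a smooth quadratic and an OT functional with cost that depends linearly on $\bm A$.

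For coercivity, I would mimic the argument already given in the proof of \cref{prop:PhiDerivative}. For any $\pi\in\Pi(\mu_0,\mu_1)$, the Cauchy–Schwarz inequality yields
\[
\int c_{\bm A}\,d\pi=\int\bigl(-4\|x\|^2\|y\|^2-32x^{\intercal}\bm A y\bigr)\,d\pi\ge -4\sqrt{M_4(\mu_0)M_4(\mu_1)}-32\|\bm A\|_F\sqrt{M_2(\mu_0)M_2(\mu_1)},
\]
since $|x^{\intercal}\bm A y|\leq \|\bm A\|_F\|x\|\|y\|$ by the Frobenius–Cauchy–Schwarz inequality. Taking the infimum over $\pi$ gives the same bound for $\mathsf{OT}_{\bm A,0}(\mu_0,\mu_1)$, and adding $32\|\bm A\|_F^2$ shows $\Phi_0(\bm A)\to +\infty$ as $\|\bm A\|_F\to\infty$.

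For the Lipschitz claim, the key observation is that $c_{\bm A}-c_{\bm B}=-32\,x^{\intercal}(\bm A-\bm B)y$ depends linearly on $\bm A$ and is controlled uniformly over $\supp(\mu_0)\times\supp(\mu_1)$. Using the standard stability estimate for linear programs with fixed marginals, namely that if $\pi_{\bm A}^{\star},\pi_{\bm B}^{\star}$ are optimizers for the costs $c_{\bm A},c_{\bm B}$ then
\[
\bigl|\mathsf{OT}_{\bm A,0}(\mu_0,\mu_1)-\mathsf{OT}_{\bm B,0}(\mu_0,\mu_1)\bigr|\leq \sup_{\pi\in\Pi(\mu_0,\mu_1)}\int |c_{\bm A}-c_{\bm B}|\,d\pi,
\]
I would then bound
\[
\int |c_{\bm A}-c_{\bm B}|\,d\pi\leq 32\|\bm A-\bm B\|_F\int\|x\|\|y\|\,d\pi\leq 32\|\bm A-\bm B\|_F\sqrt{M_2(\mu_0)M_2(\mu_1)}
\]
via Cauchy–Schwarz. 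This establishes that $\bm A\mapsto\mathsf{OT}_{\bm A,0}(\mu_0,\mu_1)$ is in fact globally Lipschitz with explicit constant $32\sqrt{M_2(\mu_0)M_2(\mu_1)}$, which is finite since $\mu_0,\mu_1$ are compactly supported. Since $\bm A\mapsto 32\|\bm A\|_F^2$ is $C^\infty$ and therefore locally Lipschitz, the sum $\Phi_0$ is locally Lipschitz.

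There is no real obstacle here: both claims reduce to a Cauchy–Schwarz bound on $|x^{\intercal}(\bm A-\bm B)y|$ combined with the elementary stability of OT under a perturbation of the cost. The only minor point worth checking is the sup-over-couplings bound on $|\mathsf{OT}_{\bm A,0}-\mathsf{OT}_{\bm B,0}|$, which follows by writing $\mathsf{OT}_{\bm A,0}-\mathsf{OT}_{\bm B,0}\leq \int c_{\bm A}\,d\pi_{\bm B}^{\star}-\int c_{\bm B}\,d\pi_{\bm B}^{\star}$ and symmetrically in the other direction.
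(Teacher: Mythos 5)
Your proof is correct and follows essentially the same route as the paper: coercivity via Cauchy–Schwarz exactly as in the proof of \cref{prop:PhiDerivative} (with the KL term simply absent), and Lipschitz continuity of $\bm A\mapsto\OT_{\bm A,0}(\mu_0,\mu_1)$ from the two one-sided comparisons using optimizers for the perturbed and unperturbed costs. The only cosmetic difference is that the paper pulls the integral inside the Frobenius inner product to get the constant $16M$, whereas you bound $|c_{\bm A}-c_{\bm B}|$ pointwise to get $32\sqrt{M_2(\mu_0)M_2(\mu_1)}$; both yield a valid (in fact global) Lipschitz constant for the OT term.
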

\begin{proof}
    We start by proving that $\Phi_0$ is locally Lipschitz. Fix a compact set $\bm K\subset \mathbb R^{d_0\times d_1}$ and observe that, for any $\bm A,\bm A'\in \bm K$, 
\[
    \left| \|\bm A\|^2_F-\|\bm A'\|^2_F\right|=\left| \|\bm A\|_F-\|\bm A'\|_F\right|\left(\|\bm A\|_F+\|\bm A'\|_F\right)\leq 2\sup_{K}\|\cdot\|_F\|\bm A-\bm A'\|_F,
\]
due to the reverse triangle inequality. This shows that $\|\cdot\|^2_F$ is locally Lipschitz. As for $\OT_{(\cdot),0}(\mu_0,\mu_1)$, let $\pi_{\bm A},\pi_{\bm A'}$ be solutions of $\OT_{\bm A,0}(\mu_0,\mu_1),\OT_{\bm A',0}(\mu_0,\mu_1)$ respectively, then
\begin{equation}
\label{eq:unregularizedDiff}
    \OT_{\bm A,0}(\mu_0,\mu_1)-\OT_{\bm A',0}(\mu_0,\mu_1)\geq \int c_{\bm A}d\pi_{\bm A}-\int c_{\bm A'}d\pi_{\bm A}=-32\left\langle \bm A-\bm A',\int xy^{\intercal}d\pi_{\bm A}(x,y)\right\rangle_F, 
\end{equation}
and similarly $\OT_{\bm A,0}(\mu_0,\mu_1)-\OT_{\bm A',0}(\mu_0,\mu_1)\leq-32\langle \bm A-\bm A',\int xy^{\intercal}d\pi_{\bm A'}(x,y)\rangle_F$. Thus, 
\[
\left|\OT_{\bm A,0}(\mu_0,\mu_1)-\OT_{\bm A',0}(\mu_0,\mu_1)\right|\leq 16M\|\bm A-\bm A'\|_F,
\]
recalling that, for any $\pi\in\Pi(\mu_0,\mu_1)$, $\int xy^{\intercal}d\pi\in \mathcal D_M$. This proves that $\Phi_0$ is locally Lipschitz. Coercivity follows by adapting the proof of \cref{prop:PhiDerivative}.
\end{proof}

  By \cref{lem:unregularizedLipschitz} and Rademacher's theorem, $\Phi_0$ is differentiable almost everywhere on $\mathbb R^{d_0\times d_1}$. As the squared Frobenius norm is smooth, we study differentiability of $\OT_{\bm A,0}(\mu_0,\mu_1)$. To simplify notation, let $\Pi^{\star}_{\bm A,0}$ denote the set of optimal solutions to $\OT_{\bm A,0}(\mu_0,\mu_1)$. 

  \begin{lemma}
      \label{lem:convexityUnregularized}
      The function $\bm A\in\mathbb R^{d_0\times d_1}\mapsto -\OT_{\bm A,0}(\mu_0,\mu_1)$ is convex; its subdifferential\footnote{The subdifferential of a convex function $f:\mathbb R^{d_0\times d_1}\mapsto \mathbb R$ at $\bm A$ consists of all $\bm\Xi\in \mathbb R^{d_0\times d_1}$ for which $f(\bm A')-f(\bm A)\geq \langle \bm A'-\bm A,\bm\Xi\rangle_F$ for every $\bm A'\in\mathbb R^{d_0\times d_1}$. } at $\bm A$ contains $\left\{32\int xy^{\intercal}d\pi:\pi\in\Pi^{\star}_{\bm A,0}\right\}$. 
  \end{lemma}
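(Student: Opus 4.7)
The plan is to exhibit $-\OT_{(\cdot),0}(\mu_0,\mu_1)$ as a supremum of affine functions of $\bm A$ and then read off both convexity and a concrete family of subgradients.

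First I would rewrite, for any $\pi\in\Pi(\mu_0,\mu_1)$,
\[
\int c_{\bm A}\,d\pi=-4\int\|x\|^2\|y\|^2 d\pi(x,y)-32\Big\langle \bm A,\int xy^{\intercal}d\pi(x,y)\Big\rangle_F,
\]
so that $\OT_{\bm A,0}(\mu_0,\mu_1)=\inf_{\pi\in\Pi(\mu_0,\mu_1)}\int c_{\bm A}\,d\pi$ is an infimum (over the fixed index set $\Pi(\mu_0,\mu_1)$) of functions of $\bm A$ that are affine. An infimum of affine functions is concave, hence $\bm A\mapsto-\OT_{\bm A,0}(\mu_0,\mu_1)$ is convex; since the affine pieces are also continuous in $\bm A$ and $\Pi(\mu_0,\mu_1)$ is nonempty, finiteness is automatic. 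This disposes of the convexity claim.

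For the subdifferential statement, fix $\bm A\in\mathbb R^{d_0\times d_1}$ and pick any $\pi^{\star}\in\Pi^{\star}_{\bm A,0}$. Using $\pi^{\star}$ as a feasible (but generally suboptimal) coupling for $\OT_{\bm A',0}(\mu_0,\mu_1)$ and subtracting the exact equality at $\bm A$, the same comparison already displayed in \eqref{eq:unregularizedDiff} gives, for every $\bm A'\in\mathbb R^{d_0\times d_1}$,
\[
\OT_{\bm A',0}(\mu_0,\mu_1)-\OT_{\bm A,0}(\mu_0,\mu_1)\le\int c_{\bm A'}d\pi^{\star}-\int c_{\bm A}d\pi^{\star}=-32\Big\langle \bm A'-\bm A,\int xy^{\intercal}d\pi^{\star}(x,y)\Big\rangle_F.
\]
Multiplying by $-1$ rearranges this into the subgradient inequality for the convex function $-\OT_{(\cdot),0}(\mu_0,\mu_1)$ with subgradient $32\int xy^{\intercal}d\pi^{\star}$, which proves the asserted inclusion.

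I do not expect a substantive obstacle here: the one point that merits care is making sure the ``infimum of affine functions'' argument is used correctly (the dependence of the cost on $\bm A$ is only through the linear term $-32\,x^{\intercal}\bm A y$, while the term $-4\|x\|^2\|y\|^2$ depends on $\pi$ but not on $\bm A$), and that the envelope inequality is taken in the right direction to pass from optimality at $\bm A$ to a lower bound on $-\OT_{\bm A',0}$ rather than on $-\OT_{\bm A,0}$. Both are straightforward, and the full subdifferential (as opposed to just the stated subset) would follow by a Danskin-type argument that is not required for this lemma.
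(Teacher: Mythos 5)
Your proposal is correct and takes essentially the same route as the paper: the paper also establishes concavity of $\OT_{(\cdot),0}(\mu_0,\mu_1)$ by observing it is an infimum of affine functions of $\bm A$ (citing Rockafellar), and deduces the subgradient inclusion directly from the one-sided comparison in \eqref{eq:unregularizedDiff}. The only difference is presentational: you spell out the affine decomposition and the rearrangement of the subgradient inequality, while the paper leaves both as immediate consequences.
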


    \begin{proof}
        As $\OT_{(\cdot),0}(\mu_0,\mu_1)$ is the infimum of a family of affine functions, it is concave (see Theorem 5.5 in \citealt{rockafellar1997convex}). The second claim follows directly from \eqref{eq:unregularizedDiff}.       
    \end{proof}

   With \cref{lem:convexityUnregularized}, it is easy to classify the points at which $\OT_{(\cdot),0}(\mu_0,\mu_1)$ is differentiable. 

   \begin{lemma}
      \label{lem:derivativeUnregularized} 
      $ -\OT_{(\cdot),0}(\mu_0,\mu_1)$ is differentiable at $\bm A$ with derivative $-D(\OT_{(\cdot),0}(\mu_0,\mu_1))_{[\bm A]}=32\int xy^{\intercal} d\pi(x,y)$ for any $\pi\in\Pi^{\star}_{\bm A,0}$ if and only if all couplings in $\Pi^{\star}_{\bm A,0}$ admit the same cross-correlation matrix. 
   \end{lemma}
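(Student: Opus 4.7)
The plan is to exploit the convexity of $f(\bm A) := -\OT_{\bm A,0}(\mu_0,\mu_1)$ established in \cref{lem:convexityUnregularized}, which reduces differentiability at $\bm A$ to the assertion that the subdifferential $\partial f(\bm A)$ is a singleton. That same lemma already shows $\partial f(\bm A)$ contains every matrix of the form $32\int xy^{\intercal}d\pi$ with $\pi \in \Pi^{\star}_{\bm A,0}$, so the heart of the proof is to establish the reverse inclusion; granted this, $\partial f(\bm A)$ is a singleton iff $\pi \mapsto \int xy^{\intercal}d\pi$ is constant on $\Pi^{\star}_{\bm A,0}$, which is exactly the claimed characterization.

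The forward direction ($\Rightarrow$) is then immediate: if $f$ is differentiable at $\bm A$, then $\partial f(\bm A)$ reduces to $\{Df_{[\bm A]}\}$, so the inclusion from \cref{lem:convexityUnregularized} forces $32\int xy^{\intercal}d\pi = Df_{[\bm A]}$ for every $\pi \in \Pi^{\star}_{\bm A,0}$, matching the asserted formula for $-D(\OT_{(\cdot),0}(\mu_0,\mu_1))_{[\bm A]}$. For the converse, suppose every $\pi \in \Pi^{\star}_{\bm A,0}$ yields the same cross-correlation matrix $M_0$. By \cref{lem:unregularizedLipschitz}, $f$ is locally Lipschitz, hence by Rademacher's theorem differentiable on a set $U$ of full Lebesgue measure, and its Clarke subdifferential \eqref{eq:subdifferentialCharacterization} coincides with the convex-analytic subdifferential by convexity. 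It therefore suffices to verify that every cluster point of $\{Df_{[\bm A_n]}\}$ along sequences $\bm A_n \to \bm A$ with $\bm A_n \in U$ equals $32 M_0$; since $Df_{[\bm A_n]} = 32\int xy^{\intercal}d\pi_n$ for any $\pi_n \in \Pi^{\star}_{\bm A_n,0}$ (by the forward direction applied at each $\bm A_n$), this reduces to an optimal transport stability step.

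The stability claim to verify is that weak cluster points of $\{\pi_n\}$ are optimal for $\OT_{\bm A,0}(\mu_0,\mu_1)$. Compactness of $S_0, S_1$ makes $\Pi(\mu_0,\mu_1)$ weakly compact and renders $c_{\bm A_n} \to c_{\bm A}$ uniformly on $S_0 \times S_1$; if $\pi_n \to \pi^{\star}$ weakly along a subsequence, optimality of $\pi_n$ combined with this uniform convergence yields
\[
\int c_{\bm A} d\pi^{\star} = \lim_n \int c_{\bm A_n} d\pi_n \leq \lim_n \int c_{\bm A_n} d\pi = \int c_{\bm A} d\pi
\]
for every $\pi \in \Pi(\mu_0,\mu_1)$, placing $\pi^{\star} \in \Pi^{\star}_{\bm A,0}$. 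The assumption then gives $\int xy^{\intercal}d\pi^{\star} = M_0$, so $Df_{[\bm A_n]} \to 32 M_0$ along the subsequence, and \eqref{eq:subdifferentialCharacterization} collapses $\partial f(\bm A)$ to $\{32 M_0\}$, completing the argument. The main obstacle is this OT stability step; although classical, it must be executed carefully by leveraging uniform cost convergence on the compact product support, as sketched above.
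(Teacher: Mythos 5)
Your proof is correct, and the forward direction (differentiability forces all cross-correlation matrices to agree, via the singleton property of the convex subdifferential and the inclusion from \cref{lem:convexityUnregularized}) is identical to the paper's. The converse direction, however, takes a genuinely different route: the paper directly verifies the Fr\'echet-differentiability definition by sandwiching the difference quotient with the two-sided subgradient inequality in \eqref{eq:unregularizedDiff}, then applying Villani's stability theorem (Theorem 5.20) to conclude that the cross-correlation matrices of $\pi_{\bm A+\bm H_n}$ converge to the common value; you instead invoke Rademacher's theorem and the Clarke gradient-limit characterization \eqref{eq:subdifferentialCharacterization}, apply the forward direction at nearby differentiability points to identify the gradients there, and use essentially the same OT stability argument to show all such gradient limits equal $32M_0$, collapsing the subdifferential to a singleton. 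The paper's route is more elementary at this point in the development --- it avoids bringing in Rademacher and Clarke machinery, which it reserves for the later characterization of $\partial\Phi_0$ in \cref{lem:ClarkeSubdiff}; your route is heavier but effectively pre-packages part of the stability-of-gradient-limits argument that the paper redoes in \cref{lem:ClarkeSubdiff}. Both are valid, and the hard step --- weak compactness of $\Pi(\mu_0,\mu_1)$ plus uniform convergence $c_{\bm A_n}\to c_{\bm A}$ on the compact product support implying optimality of weak cluster points --- is handled correctly in both.
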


   \begin{proof}
        We recall that a convex function is differentiable at $\bm A$ precisely when its subdifferential at $\bm A$ is a singleton, see Theorem 25.1 in \citet{rockafellar1997convex}. If the proposed condition on the cross-correlation matrices fails, \cref{lem:derivativeUnregularized} implies that the subdifferential is not a singleton, so differentiability fails. 

        To prove the other direction, assume that all couplings in $\Pi^{\star}_{\bm A,0}$ admit the same cross-correlation matrix. Fix a sequence $\bm H_n\in\mathbb R^{d_0\times d_1}\backslash\{\bm 0\}$ with $\|\bm H_n\|_F\downarrow 0$.  From \eqref{eq:unregularizedDiff}, we have, for any $\pi_{\bm A+\bm H_n}\in\Pi^{\star}_{\bm A+\bm H_n,0}$ and $\pi_{\bm A}\in\Pi^{\star}_{\bm A,0}$,
\[
\begin{aligned}
\|\bm H_n\|_F^{-1}{\left|\OT_{\bm A+\bm H_n,0}(\mu_0,\mu_1)-\OT_{\bm A,0}(\mu_0,\mu_1)+32\left\langle \bm H_n,\int xy^{\intercal}d\pi_{\bm A}(x,y)\right\rangle_F\right|}&
\\
&\hspace{-15em}\leq 32 \left\|\int xy^{\intercal} d\pi_{\bm A+\bm H_n}(x,y)-\int xy^{\intercal} d\pi_{\bm A}(x,y)\right\|_F, 
\end{aligned}
\]
As  
$c_{\bm A+\bm H_n}$ converges uniformly to $c_{\bm A}$ on $S_0\times S_1$, for any subsequence $n'$ of $n$ there exists a further subsequence $n''$ along which $\pi_{\bm A+\bm H_{n''}}\stackrel{w}{\to} \pi\in\Pi^{\star}_{\bm A,0}$ by Theorem 5.20 in \citet{villani2008optimal}. Since $(x,y)\in S_0\times S_1\mapsto x_iy_j$ is continuous and bounded for any $1\leq i\leq d_0,1\leq j\leq d_1$,
$\left\|\int xy^{\intercal} d\pi_{\bm A+\bm H_{n''}}(x,y)-\int xy^{\intercal} d\pi(x,y)\right\|_F=\left\|\int xy^{\intercal} d\pi_{\bm A+\bm H_{n''}}(x,y)-\int xy^{\intercal} d\pi_{\bm A}(x,y)\right\|_F\to 0$. As this limit is independent of the choice of subsequence by assumption, it holds along the original sequence $\bm H_n$ such that $D\left(\OT_{(\cdot),0}(\mu_0,\mu_1)\right)_{[\bm A]}=-32\int xy^{\intercal} d\pi_{\bm A}(x,y)$.
   \end{proof}

Those auxiliary results lead to finding the Clarke subdifferential $\partial \Phi_0$, as given below. 

    \begin{lemma}
       \label{lem:ClarkeSubdiff}
       The Clarke subdifferential of $\Phi_0$ at $\bm A\in\mathbb R^{d_0\times d_1}$ is given by 
       \[
            \partial \Phi_0(\bm A)=\left\{64\bm A-32\int xy^{\intercal}d\pi:\pi\in\Pi^{\star}_{\bm A,0}\right\}.
       \]
    \end{lemma}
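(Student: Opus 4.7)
The plan is to reduce the computation to the subdifferential of the concave function $\OT_{(\cdot),0}(\mu_0,\mu_1)$ and then apply \cref{lem:convexityUnregularized} and \cref{lem:derivativeUnregularized}. Writing $\Phi_0 = 32\|\cdot\|_F^2 + \OT_{(\cdot),0}(\mu_0,\mu_1)$, the first summand is $C^1$ with gradient $64\bm A$, so the sum rule for Clarke subdifferentials (Proposition 2.3.3 in \citet{clarke1990optimization}) yields $\partial \Phi_0(\bm A) = 64\bm A + \partial \OT_{(\cdot),0}(\mu_0,\mu_1)(\bm A)$. Setting $\mathcal K_{\bm A} \coloneqq \{-32\int xy^{\intercal}\,d\pi : \pi \in \Pi^{\star}_{\bm A,0}\}$, it thus suffices to show $\partial \OT_{(\cdot),0}(\mu_0,\mu_1)(\bm A) = \mathcal K_{\bm A}$.

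Since $-\OT_{(\cdot),0}(\mu_0,\mu_1)$ is convex by \cref{lem:convexityUnregularized}, the Clarke subdifferential of $\OT_{(\cdot),0}(\mu_0,\mu_1)$ coincides with its concave-analytic superdifferential, and negating \eqref{eq:unregularizedDiff} exhibits every element of $\mathcal K_{\bm A}$ as such a supergradient. This gives the inclusion $\mathcal K_{\bm A} \subseteq \partial \OT_{(\cdot),0}(\mu_0,\mu_1)(\bm A)$.

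For the reverse inclusion, I would use the limit characterization \eqref{eq:subdifferentialCharacterization}. By \cref{lem:derivativeUnregularized}, the differentiability set $U$ of $\OT_{(\cdot),0}(\mu_0,\mu_1)$ is of full measure (Rademacher), and at each $\bm A' \in U$ the derivative equals $-32\int xy^{\intercal}\,d\pi$ for any $\pi \in \Pi^{\star}_{\bm A',0}$. Given any sequence $U \ni \bm A_n \to \bm A$ along which these derivatives converge, pick $\pi_n \in \Pi^{\star}_{\bm A_n,0}$. Since $\mu_0,\mu_1$ are compactly supported and $c_{\bm A_n} \to c_{\bm A}$ uniformly on $S_0 \times S_1$, Theorem 5.20 in \citet{villani2008optimal}---already invoked in the proof of \cref{lem:derivativeUnregularized}---yields, along a subsequence, $\pi_n \stackrel{w}{\to} \pi \in \Pi^{\star}_{\bm A,0}$. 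Continuity and boundedness of $(x,y) \mapsto x_i y_j$ on $S_0 \times S_1$ then force $\int xy^{\intercal}\,d\pi_n \to \int xy^{\intercal}\,d\pi$, so every such limiting derivative lies in $\mathcal K_{\bm A}$. Because $\Pi^{\star}_{\bm A,0}$ is convex as the solution set of a linear program and $\pi \mapsto \int xy^{\intercal}\,d\pi$ is linear, $\mathcal K_{\bm A}$ is itself convex and hence coincides with its own convex hull; \eqref{eq:subdifferentialCharacterization} then delivers $\partial \OT_{(\cdot),0}(\mu_0,\mu_1)(\bm A) \subseteq \mathcal K_{\bm A}$.

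The main obstacle is the stability of optimal couplings under perturbations of the cost, but this is already handled in the proof of \cref{lem:derivativeUnregularized}; the remainder of the argument simply stitches this stability together with the Clarke sum rule, the concavity of $\OT_{(\cdot),0}(\mu_0,\mu_1)$, and the convexity of the candidate set $\mathcal K_{\bm A}$.
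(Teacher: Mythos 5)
Your proof is correct and takes essentially the same route as the paper: both directions of the inclusion for $\partial\OT_{(\cdot),0}(\mu_0,\mu_1)$ rest on the concavity/superdifferential argument via \eqref{eq:unregularizedDiff} and on the Rademacher limit characterization combined with stability of optimal couplings (Villani, Thm.~5.20) and convexity of $\Pi^{\star}_{\bm A,0}$, followed by the Clarke sum rule to reintroduce the $C^1$ term $32\|\cdot\|_F^2$. The only cosmetic difference is that you apply the sum rule at the outset to reduce immediately to $\partial\OT_{(\cdot),0}(\mu_0,\mu_1)$ (citing Clarke Prop.~2.3.3), whereas the paper characterizes $\partial\OT_{(\cdot),0}(\mu_0,\mu_1)$ first and adds the smooth part at the very end (citing Clarke Cor.~1, p.~39); both citations justify equality in the sum rule here since the Frobenius term is strictly differentiable.
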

    \begin{proof}
       We first characterize the Clarke subdifferential of $\OT_{(\cdot),0}(\mu_0,\mu_1)$ at $\bm A$.  
By Propositions 2.2.7 and 2.3.1  in \citet{clarke1990optimization} along with \cref{lem:convexityUnregularized},  
\begin{equation}
\label{eq:subdifferentialInclusion}
\partial \left(\OT_{(\cdot),0}(\mu_0,\mu_1)\right)(\bm A)\supset\left\{-32\int xy^{\intercal}d\pi(x,y): \pi\in\Pi^{\star}_{\bm A,0} \right\}.
\end{equation}

As \cref{lem:derivativeUnregularized} establishes the set $U\subset \mathbb R^{d_0\times d_1}$ on which $\Phi_0$ is differentiable, we study the Clarke subdifferential of $\Phi_0$ through the lens of  \eqref{eq:subdifferentialCharacterization}.  
 Observe that if $U\not\ni\bm A_n\to\bm A$ and $\pi_{\bm A_n}\in\Pi^{\star}_{\bm A_n,0}$, then $\pi_{\bm A_n}\stackrel{w}{\to}\pi_{\bm A}\in\Pi^{\star}_{\bm A,0}$ as in the proof of \cref{lem:derivativeUnregularized} (up to a subsequence). Moreover,    \[
\lim_{n\to\infty}D\left(\OT_{(\cdot),0}(\mu_0,\mu_1)\right)_{[\bm A_n]}=\lim_{n\to\infty}-32\int xy^{\intercal}d\pi_{\bm A_n}(x,y)=-32\int xy^{\intercal}d\pi_{\bm A}(x,y)
\]
along this subsequence. Thus, if $\bm A_n'\not\in U$ converges to $\bm A$ and $\lim_{n\to\infty}D\left(\OT_{(\cdot),0}(\mu_0,\mu_1)\right)_{[\bm A_n']}$ exists, then the limit is given by  $-32\int xy^{\intercal}d\pi(x,y)$ for some $\pi\in \Pi^{\star}_{\bm A,0}$. It is easy to see that $\Pi^{\star}_{\bm A,0}$ is convex, so  \eqref{eq:subdifferentialInclusion} and \eqref{eq:subdifferentialCharacterization} together imply that 
\[
    \partial \left(\OT_{(\cdot),0}(\mu_0,\mu_1)\right)(\bm A)= \left\{-32\int xy^{\intercal}d\pi(x,y):\pi \in \Pi_{\bm A,0}^\star \right\}.
\]
Conclude by applying the subdifferential sum rule (Corollary 1 on p. 39 of \citealt{clarke1990optimization}).    
    \end{proof}

\begin{proof}[Proof of \cref{prop:UnregularizedSolutions}]
   By Proposition 2.3.2 in \citet{clarke1990optimization}, if 
   $\Phi_0$ attains a local minimum at $\bm A^{\star}$, then $0\in\partial \Phi_0(\bm A^{\star})$, i.e., there exists $\pi\in\Pi^{\star}_{\bm A^{\star},0}$ for which $\bm A^{\star}=\frac{1}{2}\int xy^{\intercal} d\pi(x,y)$ by \cref{lem:ClarkeSubdiff}. The second result on optimality for the original GW problem follows directly from the proof of \cref{cor:StationaryPoint}.  
\end{proof}
}

\edit{
\subsection{Proof of \texorpdfstring{\cref{thm:approxConvergence}}{Theorem 24}}
\label{sec:thmapproxConvergence}

Throughout, let $\mathcal X_i\subset \mathbb R^{d_i}$ be a closed ball with finite radius $r>0$ for which $\supp(\mu_i)\subset \mathcal X_i$ for $i=0,1$. By the Bolzano-Weierstrass theorem, $\bm A^{\star}_{\varepsilon}$ admits a limit point $\bm A^{\star}_{0}$ as $\varepsilon\downarrow 0$. Let $\varepsilon_k\downarrow 0$ be a sequence along which $\bm A^{\star}_{\varepsilon_k}\to\bm A^{\star}_{0}$ and define 
\[
\begin{aligned}    
   F_k:\pi\in\mathcal P(\mathcal X_0\times \mathcal X_1)&\mapsto \begin{cases}
        \int c_{\bm A^{\star}_{\varepsilon_k}}d\pi+\varepsilon_k\mathsf D_{\mathsf{KL}}(\pi\|\mu_0\otimes \mu_1),&\text{if }\pi\in\Pi(\mu_0,\mu_1),
        \\
        +\infty,&\text{otherwise,}
        \end{cases}
       \\
    F:\pi\in\mathcal P(\mathcal X_0\times \mathcal X_1)&\mapsto \begin{cases}
        \int c_{\bm A^{\star}_0}d\pi,&\text{if }\pi\in\Pi(\mu_0,\mu_1),
        \\
        +\infty,&\text{otherwise.}
        \end{cases}   
        \end{aligned} 
\]
Throughout, we endow $\mathcal P(\mathcal X_0\times \mathcal X_1)$ with the topology induced by the weak convergence of probability measures which is metrized by the $2$-Wasserstein distance, $\mathsf W_2$, for instance; $(\mathcal P(\mathcal X_0\times \mathcal X_1),\mathsf W_2)$ is notably a separable metric space, see Theorem 6.18 in \citet{villani2008optimal}. 

We first show that $F_k$ $\Gamma$-converges to $F$ according to Definition 2.3 in \citet{braides2014local}. To this end, we show that 
  \begin{equation}
   \label{eq:GammaConv1}   
   F(\pi)\leq \liminf_{k\to \infty}F_k(\pi_k),\text{ for every } \pi_k\stackrel{w}{\to}\pi.
\end{equation} 
    and exhibit a sequence $\pi_k\stackrel{w}{\to}\pi$ satisfying 
  \begin{equation}
   \label{eq:GammaConv2}
F(\pi)\geq \limsup_{k\to \infty}F_k(\pi_k).
    \end{equation}
    
To prove \eqref{eq:GammaConv1}, first assume that $\pi\in\Pi(\mu_0,\mu_1)$. Then, if $\pi_k\stackrel{w}{\to} \pi$,
$
F_k(\pi_k)\geq \int c_{\bm A^{\star}_{\varepsilon_k}}d\pi_k, 
$ 
by nonnegativity of the KL divergence. As $c_{\bm A^{\star}_{\varepsilon_k}}$ converges to $c_{\bm A^{\star}_{0}}$ uniformly on $\mathcal X_0\times \mathcal X_1$ and $c_{\bm A^{\star}_{0}}$ is continuous and bounded, 
\begin{equation}
\label{eq:limofCost}
    \left|\int c_{\bm A^{\star}_{\varepsilon_k}}d\pi_k-\int c_{\bm A^{\star}_{0}}d\pi\right|\leq \left|\int c_{\bm A^{\star}_{\varepsilon_k}}-c_{\bm A^{\star}_{0}}d\pi_k\right|+\left|\int c_{\bm A^{\star}_{0}}d\pi-\int c_{\bm A^{\star}_{0}}d\pi_k\right|\to 0.  
\end{equation}
This shows that, if $\pi\in\Pi(\mu_0,\mu_1)$, then
\[
\liminf_{k\to\infty}F_k(\pi_k)\geq \lim_{k\to\infty}\int c_{\bm A^{\star}_{\varepsilon_k}}d\pi_k=F(\pi).
\]

If $\pi\not\in\Pi(\mu_0,\mu_1)$, then $\pi_k\not\in\Pi(\mu_0,\mu_1)$ for all $k$ sufficiently large as $\Pi(\mu_0,\mu_1)$ is compact for the weak convergence on $\mathcal P(\mathcal X_0\times \mathcal X_1)$ (cf. e.g. the proof of Theorem 1.4 in \citealt{santambrogio2010}) so the bound holds vacuously, proving \eqref{eq:GammaConv1}.

As for \eqref{eq:GammaConv2}, if $\pi\not\in\Pi(\mu_0,\mu_1)$, then there is nothing to show. For $\pi\in\Pi(\mu_0,\mu_1)$, we consider a block approximation; cf. \citet{carlier2017convergence,genevay2019sample}. First, for $i=0,1$, partition $\mathbb R^{d_i}$ by hypercubes of length $\ell>0$,
    \[
    \{H_{i,q,\ell}=[k_1\ell,(q_1+1)\ell)\times \dots \times [q_{d_i}\ell,(q_{d_i}+1)\ell):q=(q_1,\dots,q_{d_i})\in\mathbb Z^{d_i}\},
    \]
    and define the block approximation, $\pi_{\ell}$, of $\pi$ by   
    \[
    \begin{gathered}
    \pi_{\ell}=\sum_{(q,q')\in\mathbb Z^{d_0}\times \mathbb Z^{d_1}}\pi_{\ell}\vert_{H_{0,q,\ell}\times H_{1,q',\ell} },
    \\
    \pi_{\ell}\vert_{H_{0,q,\ell}\times H_{1,q',\ell}}=\frac{\pi(H_{0,q,\ell}\times H_{1,q',\ell})}{\mu_0\otimes \mu_1(H_{0,q,\ell}\times H_{1,q',\ell})}(\mu_0\vert_{H_{0,q,\ell}}\otimes \mu_1\vert_{H_{1,q',\ell}}), 
   \end{gathered} 
    \]
    for every $(q,q')\in\mathbb Z^{d_0}\times\mathbb Z^{d_1}$ with the convention $\frac{0}{0}=0$. Here, $\mu_0\vert_{H_{0,q,\ell}}(A)=\mu_0(A\cap H_{0,q,\ell})$ and similarly for the other restrictions.
    Of note is that $\pi_{\ell}\ll\mu_0\otimes \mu_1$ and that $\pi_{\ell}\in \Pi(\mu_0,\mu_1)$ (see the discussion surrounding Definition 1 in \citealt{genevay2019sample}). 

    \begin{lemma} 
        \label{lem:weakConvergenceBlock} 
    The block approximation
        $\pi_{\ell}$ of $\pi$ converges weakly to $\pi$ as $\ell\downarrow 0$. 
    \end{lemma}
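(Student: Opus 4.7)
The plan is to verify weak convergence by testing against an arbitrary $f\in\mathcal C_b(\mathcal X_0\times\mathcal X_1)$ and showing $\int f\,d\pi_\ell\to\int f\,d\pi$ as $\ell\downarrow 0$. The key geometric fact driving the proof is that, by construction, $\pi_\ell$ and $\pi$ assign the same mass to every block $H_{0,q,\ell}\times H_{1,q',\ell}$: indeed, evaluating the defining formula for $\pi_\ell|_{H_{0,q,\ell}\times H_{1,q',\ell}}$ on the block itself yields exactly $\pi(H_{0,q,\ell}\times H_{1,q',\ell})$. As $\ell\downarrow 0$ the blocks shrink while the test function becomes essentially constant on each one, so the two integrals must agree in the limit.

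First I would reduce to a finite sum using compactness of $\mathcal X_0\times\mathcal X_1$: only finitely many blocks intersect the supports, so the sum defining $\pi_\ell$ has finitely many nontrivial terms. I would also record the observation that the convention $0/0=0$ is harmless, since $\mu_0(H_{0,q,\ell})\mu_1(H_{1,q',\ell})=0$ forces $\pi(H_{0,q,\ell}\times H_{1,q',\ell})=0$ via the marginal constraints on $\pi\in\Pi(\mu_0,\mu_1)$. This is the one step that requires a moment of thought, but it is essentially immediate.

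Next I would fix $\eta>0$ and exploit uniform continuity of $f$ on the compact set $\mathcal X_0\times\mathcal X_1$: there exists $\delta>0$ such that $|f(u)-f(v)|<\eta$ whenever $\|u-v\|<\delta$. Choose $\ell$ so small that $\ell\sqrt{d_0+d_1}<\delta$, whence every block has diameter below the modulus-of-continuity threshold. Picking an arbitrary reference point $\xi_{q,q'}$ in each active block, both $\pi_\ell|_{H_{0,q,\ell}\times H_{1,q',\ell}}$ and $\pi|_{H_{0,q,\ell}\times H_{1,q',\ell}}$ are supported in that block and share total mass $\pi(H_{0,q,\ell}\times H_{1,q',\ell})$, so
\[
\left|\int_{H_{0,q,\ell}\times H_{1,q',\ell}} f\,d\pi_\ell - f(\xi_{q,q'})\pi(H_{0,q,\ell}\times H_{1,q',\ell})\right|\le \eta\,\pi(H_{0,q,\ell}\times H_{1,q',\ell}),
\]
and likewise with $\pi$ in place of $\pi_\ell$. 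Summing over all active blocks and applying the triangle inequality gives $|\int f\,d\pi_\ell-\int f\,d\pi|\le 2\eta$. Sending $\ell\downarrow 0$ and then $\eta\downarrow 0$ yields $\pi_\ell\stackrel{w}{\to}\pi$.

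The main obstacle is merely the bookkeeping around degenerate blocks (those with zero $\mu_0\otimes\mu_1$-mass), which is dispatched by the marginal argument above; beyond that, the proof is a routine uniform-continuity plus block-equidistribution computation, so I would not expect any substantial difficulty.
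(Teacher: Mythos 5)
Your argument is correct, but it takes a genuinely different route from the paper's. The paper proves the stronger statement $\mathsf W_2(\pi,\pi_\ell)\le\sqrt{d_0+d_1}\,\ell$ by exhibiting an explicit coupling $\gamma$ between $\pi$ and $\pi_\ell$ built block-by-block (possible because the two measures assign the same mass to each block), and then invokes the fact that $\mathsf W_2$ metrizes weak convergence on the compact product space. You instead test against an arbitrary $f\in\mathcal C_b$ and run a direct uniform-continuity argument: on each block the two restricted measures share total mass and are supported in a set of diameter $\sqrt{d_0+d_1}\,\ell$, so their integrals of $f$ agree up to $2\eta$ once $\ell$ is below the modulus-of-continuity threshold. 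Both arguments hinge on exactly the same two facts (block masses agree, blocks are small); the paper's coupling formulation packages them into a quantitative Wasserstein bound (which is a bit more than the lemma states and is arguably cleaner to invoke given that the paper already works with $\mathsf W_2$ as the ambient metric), while yours is more elementary and self-contained, avoiding the Wasserstein machinery entirely. Your remark that the $0/0=0$ convention is harmless because $\mu_0\otimes\mu_1(\text{block})=0$ forces $\pi(\text{block})=0$ via the marginal constraint is accurate and worth recording; the paper handles this implicitly by restricting the sum in the coupling to blocks with positive $\pi$-mass. One small presentational caveat: when you say both restrictions are ``supported in that block,'' you should say the closure of the block, since the half-open blocks $H_{i,q,\ell}$ need not contain the support of the restricted measures on their boundary faces; this does not affect the estimate because the diameter of the closure is the same.
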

    \begin{proof}
        Let $Q=\left\{ (q,q')\in\mathbb Z^{d_0}\times\mathbb Z^{d_1}:\pi(H_{0,q,\ell}\times H_{1,q',\ell})>0  \right\}$ and $\gamma=\sum_{(q,q')\in Q}\pi(H_{0,q,\ell}\times
    H_{1,q',\ell})\gamma_{q,q'}$ where $\gamma_{q,q'}$ is any coupling of the measures $\left(\frac{\pi\vert_{H_{0,q,\ell}\times
    H_{1,q',\ell}}}{\pi({H_{0,q,\ell}\times
    H_{1,q',\ell}})},\frac{\pi_{\ell}\vert_{H_{0,q,\ell}\times
    H_{1,q',\ell}}}{\pi_{\ell}({H_{0,q,\ell}\times
    H_{1,q',\ell}})} \right)$ with support in $\overline{H_{0,q,\ell}\times
    H_{1,q',\ell}}$ (the closure of $H_{0,q,\ell}\times
    H_{1,q',\ell}$). As $\pi_{\ell}({H_{0,q,\ell}\times
    H_{1,q',\ell}})=\pi({H_{0,q,\ell}\times
    H_{1,q',\ell}})$ by construction, it is readily seen that $\gamma\in\Pi(\pi,\pi_{\ell})$. Thus, 
    \[
    \begin{aligned} 
        \mathsf W_2^2(\pi,\pi_{\ell})\leq \int \|x-y\|^2d\gamma(x,y)&= \sum_{(q,q')\in Q}\pi(H_{0,q,\ell}\times
    H_{1,q',\ell})\int\|x-y\|^2d\gamma_{q,q'}(x,y)
    \\&
    \leq (d_0+d_1)\ell^2,
    \end{aligned} 
    \]
    noting that $\diam(H_{0,q,\ell}\times H_{1,q',\ell} )=\sqrt{d_0+d_1}\ell$ is the diameter of a hypercube in $\mathbb R^{d_0+d_1}$ of length $\ell$. Conclude that $\mathsf W_2(\pi,\pi_{\ell})\to 0$ as $\ell \downarrow 0$ such that $\pi_{\ell}\stackrel{w}{\to}\pi$.
    \end{proof}

     Setting $\ell_k=\varepsilon_k$,  
    \cref{lem:weakConvergenceBlock} yields 
    \[
        \lim_{k\to \infty} \int c_{\bm A^{\star}_{\varepsilon_k}} d\pi_{\varepsilon_k}=\int c_{\bm A^{\star}_{0}} d\pi,
    \]
    by a simple adaption of \eqref{eq:limofCost}. It remains to show that $\lim_{k\to\infty}\varepsilon_k\mathsf D_{\mathsf{KL}}(\pi_{\varepsilon_k}\|\mu_0\otimes\mu_1)=0$ such that 
    \[
        \lim_{k\to\infty} F_k(\pi_{\varepsilon_k})\to F(\pi).
    \]

    \begin{lemma}
     The block approximation
        $\pi_{\ell}$ of $\pi$ satisfies $\varepsilon_k\mathsf{D}_{\mathsf{KL}}(\pi_{\varepsilon_k}\|\mu_0\otimes \mu_1)\to 0$ as $k\to \infty$.   
    \end{lemma}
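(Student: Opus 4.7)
The plan is to compute the KL divergence of the block approximation explicitly and show it grows at most logarithmically in $1/\ell$, so that multiplying by $\varepsilon_k = \ell_k$ gives a vanishing bound.

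First, I would exploit the piecewise-constant structure of $\pi_\ell$. Let $B_{q,q'} = H_{0,q,\ell} \times H_{1,q',\ell}$, $p_{q,q'} = \pi(B_{q,q'})$, $a_q = \mu_0(H_{0,q,\ell})$, and $b_{q'} = \mu_1(H_{1,q',\ell})$. Since $\pi_\ell\vert_{B_{q,q'}}$ has density $p_{q,q'}/(a_q b_{q'})$ with respect to $\mu_0\otimes \mu_1$ (with the $0/0 = 0$ convention), a direct computation gives
\[
\mathsf D_{\mathsf{KL}}(\pi_\ell\|\mu_0\otimes \mu_1) = \sum_{(q,q')} p_{q,q'}\log\frac{p_{q,q'}}{a_q b_{q'}} = -H_\ell(\pi) + H_\ell(\mu_0) + H_\ell(\mu_1),
\]
where $H_\ell(\pi) = -\sum_{(q,q')} p_{q,q'}\log p_{q,q'}$ and $H_\ell(\mu_i)$ is the discrete entropy of the push-forward of $\mu_i$ to the index set of blocks (using $\sum_{q'} p_{q,q'} = a_q$ and $\sum_q p_{q,q'} = b_{q'}$ from the marginal constraints). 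This identity already shows $\mathsf D_{\mathsf{KL}}(\pi_\ell\|\mu_0\otimes \mu_1) \leq H_\ell(\mu_0) + H_\ell(\mu_1)$, since discrete entropy is nonnegative.

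Next, I would bound each $H_\ell(\mu_i)$ by the log of the number of blocks intersecting $\supp(\mu_i)$. Since $\supp(\mu_i) \subset \mathcal X_i$ and $\mathcal X_i$ is a closed ball of radius $r$, the number of hypercubes $H_{i,q,\ell}$ that meet $\supp(\mu_i)$ is at most $N_{i,\ell} \leq \bigl(\lceil 2r/\ell\rceil + 1\bigr)^{d_i} = O(\ell^{-d_i})$ as $\ell\downarrow 0$. The maximum-entropy bound then yields $H_\ell(\mu_i) \leq \log N_{i,\ell} = d_i \log(1/\ell) + O(1)$, and therefore
\[
\mathsf D_{\mathsf{KL}}(\pi_\ell\|\mu_0\otimes \mu_1) \leq (d_0 + d_1)\log(1/\ell) + C,
\]
for a constant $C$ depending only on $r, d_0, d_1$.

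Finally, substituting $\ell = \ell_k = \varepsilon_k$ gives
\[
\varepsilon_k \mathsf D_{\mathsf{KL}}(\pi_{\varepsilon_k}\|\mu_0\otimes \mu_1) \leq (d_0+d_1)\varepsilon_k\log(1/\varepsilon_k) + C\varepsilon_k \to 0
\]
as $\varepsilon_k\downarrow 0$, which is the desired conclusion. The only nontrivial step is the entropy identity and the accompanying counting bound, both of which are essentially standard (and implicit in \citet{carlier2017convergence,genevay2019sample}); the rest is routine algebra.
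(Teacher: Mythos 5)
Your proof is correct and uses the same core strategy as the paper's: bound the KL divergence of the block approximation by $O(\log(1/\ell))$ and then conclude via $\varepsilon_k\log(1/\varepsilon_k)\to 0$. In both arguments the first move discards a nonnegative term — you observe $H_\ell(\pi)\ge 0$ in the exact identity $\mathsf D_{\mathsf{KL}}(\pi_\ell\|\mu_0\otimes\mu_1)=-H_\ell(\pi)+H_\ell(\mu_0)+H_\ell(\mu_1)$, while the paper uses $\pi(H_{0,q,\ell}\times H_{1,q',\ell})\le 1$ pointwise inside the integrand; these are the same step in different clothing. Where you part ways with the paper is in controlling the marginal discretization entropies: you invoke the elementary discrete maximum-entropy bound $H_\ell(\mu_i)\le\log N_{i,\ell}$ together with the block count $N_{i,\ell}=O(\ell^{-d_i})$, whereas the paper rewrites $H_\ell(\mu_i)$ as a differential entropy minus $d_i\log\ell$ and bounds the differential entropy by that of the uniform distribution on the $\ell$-enlargement $\mathcal X_{i,\ell}$. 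Your counting argument is cleaner, more elementary, and yields the same final rate $(d_0+d_1)\log(1/\ell)+O(1)$; the mutual-information identity you display also makes the "drop the nonnegative term" step more transparent than the paper's pointwise bound.
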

\begin{proof}
As $\pi(H_{0,q,\ell}\times H_{1,q',\ell})\leq 1$, 
    \begin{equation}
    \label{eq:entropyBound1}
        \begin{aligned}
    \mathsf D_{\mathsf{KL}}(\pi^{\ell}\|\mu_0\otimes \mu_1)&= \sum_{(q,q')\in \mathbb Z^{d_0}\times \mathbb Z^{d_1}}\int \log\left(\frac{\pi\left( H_{0,q,\ell}\times H_{1,q',\ell} \right)}{\mu_0\otimes \mu_1(H_{0,q,\ell}\times H_{1,q',\ell})}\right)d\pi_{\ell}\vert_{H_{0,q,\ell}\times H_{1,q',\ell}}
    \\
    &\leq \sum_{(q,q')\in \mathbb Z^{d_0}\times \mathbb Z^{d_1}}\int -\log(\mu_0(H_{0,q,\ell}))-\log(\mu_1(H_{1,q',\ell})) d\pi_{\ell}\vert_{H_{0,q,\ell}\times H_{1,q',\ell}}
    \\
    &=\sum_{(q,q')\in \mathbb Z^{d_0}\times \mathbb Z^{d_1}}\left( -\log(\mu_0(H_{0,q,\ell}))-\log(\mu_1(H_{1,q',\ell}))\right) \pi(H_{0,q,\ell}\times H_{1,q',\ell})
    \\
    &=-\sum_{q\in \mathbb Z^{d_0}}\log(\mu_0(H_{0,q,\ell}))\mu_0(H_{0,q,\ell})-\sum_{q'\in \mathbb Z^{d_1}}\log(\mu_1(H_{1,q',\ell}))\mu_1(H_{1,q',\ell}).
\end{aligned} 
    \end{equation}
    Observe that
    \[
        \label{eq:entropyBound2}
    \begin{aligned}
    &-\sum_{q\in \mathbb Z^{d_0}}\log(\mu_0(H_{0,q,\ell}))\mu_0(H_{0,q,\ell})
    \\
    &=-\int \log\left(\frac{\mu_0(H_{0,q,\ell})}{\ell^{d_0}}\mathbbm 1_{H_{0,q,\ell}}\right)\frac{\mu_0(H_{0,q,\ell})}{\ell^{d_0}}\mathbbm 1_{H_{0,q,\ell}}d\lambda-d_0\log(\ell), 
\end{aligned} 
    \]
    where $\lambda$ denotes the Lebesgue measure.
    The first term on the last line is the differential entropy of a probability distribution supported on $\mathcal X_{0,\ell}=\cup_{q\in I}H_{0,q,\ell}$, where $I=\left\{ q\in\mathbb Z^{d_0}:H_{0,q,\ell}\cap\mathcal X_0\neq \emptyset  \right\}$. This quantity is maximized among all probability distributions supported on $\mathcal X_{0,\ell}$ which are absolutely continuous w.r.t. the Lebesgue measure by the uniform distribution on $\mathcal X_{0,\ell}$ with
    value $d_0\log(\lambda(\mathcal X_{0,\ell}))$. With this and \eqref{eq:entropyBound1}, we obtain
    \[
    \mathsf{D}_{\mathsf{KL}}(\pi^{\ell}\|\mu_0\otimes \mu_1)\leq d_0\log\left(\frac{\lambda(\mathcal X_{0,\ell})}{\ell}  \right) +d_1\log\left(\frac{\lambda(\mathcal X_{1,\ell})}{\ell}  \right).
    \]
    Conclude that 
    \[
        0\leq \varepsilon_k\mathsf{D}_{\mathsf{KL}}(\pi_{\varepsilon_k}\|\mu_0\otimes \mu_1)\leq \varepsilon_k\left(d_0\log\left(\frac{\lambda(\mathcal X_{0,\varepsilon_k})}{\varepsilon_k}  \right) +d_1\log\left(\frac{\lambda(\mathcal X_{1,\varepsilon_k})}{\varepsilon_k}  \right)\right)\to 0,
    \]
    as $\lambda(\mathcal X_{i,\varepsilon_k})\geq \lambda(\mathcal X_{i})>0$ for $i=0,1$.
\end{proof}

With this, we have shown that $F_k$ $\Gamma$-converges to $F$. Now, let $\pi_{\varepsilon_k}$ minimize $F_k$. As $\Pi(\mu_0,\mu_1)$ is compact, $\pi_{\varepsilon_k}$ admits a cluster point $\pi_0$ which minimizes $F$ by Theorem 2.1 in \citet{braides2014local}. Thus, $\pi_0$ is an optimal solution of $\OT_{\bm A_0^{\star}}(\mu_0,\mu_1)$ and, along the subsequence where $\pi_{\varepsilon_k}\stackrel{w}{\to}\pi_0$, 
\[\left\|64\bm A^{\star}_{0}-32\int xy^{\intercal}d\pi_{{0}}\right\|_F=\lim_{k\to\infty}\left\|64\bm A^{\star}_{\varepsilon_k}-32\int xy^{\intercal}d\pi_{\varepsilon_{k}}\right\|_F\leq \delta.
\]
This concludes the proof of the first result.

   For the results pertaining to local/global optimality, let $\varepsilon_k\downarrow 0$ be arbitrary and define
   \[
        \begin{aligned}
            G_k:\bm A\in\mathbb R^{d_0\times d_1}&\mapsto \begin{cases} 
                \Phi_{\varepsilon_k},&\text{if }\bm A\in\mathcal D_M,
                \\
                +\infty,&\text{otherwise},
            \end{cases},\quad
            G:\bm A\in\mathbb R^{d_0\times d_1}&\mapsto \begin{cases} 
                \Phi_{0},&\text{if }\bm A\in\mathcal D_M,
                \\
                +\infty,&\text{otherwise}.
            \end{cases}
        \end{aligned} 
   \] We now show that $G_k$ $\Gamma$-converges to $G$.
   Observe that, for any $\bm A_k\to \bm A\in\mathcal D_M$, 
   \[   
        \Phi_{\varepsilon_k}(\bm A_k)=32\|\bm A_k\|^2_F+\OT_{\bm A_k,\varepsilon_k}(\mu_0,\mu_1)\geq32\|\bm A_k\|^2_F+\OT_{\bm A_k,0}(\mu_0,\mu_1),  
   \]
   where the inequality is due to nonnegativity of the KL divergence. As $c_{\bm A_k}\to c_{\bm A}$ uniformly, $32\|\bm A_k\|^2_F+\OT_{\bm A_k,0}(\mu_0,\mu_1)\to 32\|\bm A\|^2_F+\OT_{\bm A,0}(\mu_0,\mu_1)=\Phi_0(\bm A)$. If $\bm A\not\in\mathcal D_M$, then $\bm A_k\not\in\mathcal D_M$ for every $k$ sufficiently large,   so the $\liminf$ condition \eqref{eq:GammaConv1} holds.

   For the $\limsup$ condition \eqref{eq:GammaConv2}, if $\bm A\not\in\mathcal D_M$, then  the bound is vacuous. If $\bm A\in\mathcal D_M$, then let $\pi_{\bm A}$ be an optimal solution of $\OT_{\bm A,0}(\mu_0,\mu_1)$ and $\pi_{\varepsilon_k}$ be the block approximation of $\pi_{\bm A}$ with $\ell=\varepsilon_k$. Then, 
   \[
   0\leq \OT_{\bm A,\varepsilon_k}(\mu_0,\mu_1)-\OT_{\bm A,0}(\mu_0,\mu_1)\leq \int c_{\bm A}d\pi_{\varepsilon_k}+\varepsilon_k\mathsf D_{\mathsf{KL}}(\pi_{\varepsilon_k}\|\mu_0\otimes\mu_1)-\int c_{\bm A}d\pi_{\bm A}. 
   \]
   We have shown previously that the rightmost term converges to $0$ as $k\to\infty$, so 
   $
        \Phi_{\varepsilon_k}(\bm A) \to \Phi_{0}(\bm A),  
   $ 
   such that $G_k$ $\Gamma$-converges to $G$. By Theorem 2.1 in \citet{braides2014local}, any cluster point of a sequence of minizers of $G_k$ minimizes $G$, proving the claim. 

   Finally, consider the case where $(\bm A_{\varepsilon_k})_{k\in\mathbb N}$ satisfies the conditions of part 3 of \cref{thm:approxConvergence}, i.e., $\bm A_{\varepsilon_k}\to \bm A^{\star}$ and $\bm A_{\varepsilon_{k}}$ minimizes $\Phi_{\varepsilon_k}$ on a closed ball of fixed radius $r>0$ centred at $\bm A^{\star}$, say $B^{\star}_r$. A simple adaptation of the previous arguments yields that $G_k$ restricted to $B^{\star}_r$ $\Gamma$-converges to $G$ restricted to $B^{\star}_r$. As such, $\bm A^{\star}$ minimizes $\Phi_0$ over $B^{\star}_r$ and is thus locally minimal for $\Phi_0$.      
    }

\section{Concluding Remarks}\label{sec: summary}

\edit{%

This work studied efficient computation of the quadratic EGW alignment problem between Euclidean spaces subject to non-asymptotic convergence guarantees. Despite the availability of various heuristic methods for computing EGW, formal guarantees beyond asymptotic convergence to a stationary point %
were absent until now. To develop our algorithms, we leveraged the variational form of the EGW distance that ties it to the well-understood EOT problem with a certain parametrized cost function. By analyzing the stability of the variational problem, its convexity and smoothness properties were established, which led to two new efficient algorithms for computing the EGW distance. The complexity of our algorithms agree with the best known complexity of $O(N^2)$ for computing the quadratic EGW distance directly, but unlike previous approaches, our methods are subject to non-asymptotic convergence rate guarantees to global/local solutions in the convex/non-convex regime. As the first derivative of the objective function depends on the solution to an EOT problem which must be solved numerically, we quantify both the error incurred by Sinkhorn's algorithm and the resulting effect on the convergence of both algorithms. Moreover, we establish a suitable notion of convergence of solutions to the variational EGW problem to those of the variational GW problem in the limit of vanishing regularization. Below, we discuss possible extensions and future research direction stemming from this work.  
}

\medskip
\noindent\textbf{Algorithmic improvements.} The stability analysis of the variational problem lays the groundwork for solving the EGW problem via smooth optimization methods. Consequently, improvements or alternatives to the proposed accelerated gradient methods are of great practical interest. For instance, marked improvements can be attained by analyzing the tradeoff between the per iteration cost associated with updating the step size parameter and the resulting decrease in the number of iterations required for convergence. Similarly, establishing sharper bounds on the eigenvalues of the Hessian would improve our characterization of the smoothness and convexity properties of the objective.

\medskip
\noindent\textbf{Expanding duality theory.} To the best of our knowledge, the current duality theory for the EGW problem is limited to the quadratic and inner product costs over Euclidean spaces. As the present work makes heavy use of this duality theory, we anticipate that these results could be extended to the EGW problem with other costs and/or spaces once an adequate duality theory has been established. Non-Euclidean spaces are not only of theoretical, but also of practical interest under the GW paradigm as they allow comparing/aligning important examples such as manifold or graph data.

\acks{Z. Goldfeld is partially supported by NSF grants CCF-2046018, DMS-2210368, and CCF-2308446, and the IBM Academic Award.
K. Kato is partially supported by the NSF grants DMS-1952306, DMS-2014636, and DMS-2210368. G. Rioux is partially supported by the NSERC Postgraduate Fellowship PGSD-567921-2022.}

\vskip 0.2in
\bibliography{ref}

\begin{thebibliography}{65}
\providecommand{\natexlab}[1]{#1}
\providecommand{\url}[1]{\texttt{#1}}
\expandafter\ifx\csname urlstyle\endcsname\relax
  \providecommand{\doi}[1]{doi: #1}\else
  \providecommand{\doi}{doi: \begingroup \urlstyle{rm}\Url}\fi

\bibitem[Alvarez-Melis and Jaakkola(2018)]{alvarez2018gromov}
D.~Alvarez-Melis and T.~Jaakkola.
\newblock Gromov-wasserstein alignment of word embedding spaces.
\newblock In \emph{Proceedings of the 2018 Conference on Empirical Methods in
  Natural Language Processing}, pages 1881--1890. Association for Computational
  Linguistics, 2018.

\bibitem[Apostol(1974)]{apostol1974mathematical}
T.~M. Apostol.
\newblock \emph{Mathematical analysis}.
\newblock Addison-Wesley, 5 edition, 1974.

\bibitem[Becker et~al.(2011)Becker, Cand{\`e}s, and Grant]{becker2011templates}
S.~R. Becker, E.~J. Cand{\`e}s, and M.~C. Grant.
\newblock Templates for convex cone problems with applications to sparse signal
  recovery.
\newblock \emph{Mathematical programming computation}, 3:\penalty0 165--218,
  2011.

\bibitem[Beinert et~al.(2022)Beinert, Heiss, and Steidl]{beinert2022assignment}
R.~Beinert, C.~Heiss, and G.~Steidl.
\newblock {On assignment problems related to Gromov-Wasserstein distances on
  the real line}.
\newblock \emph{arXiv preprint arXiv:2205.09006}, 2022.

\bibitem[Birkhoff(1957)]{birkhoff1957extensions}
G.~Birkhoff.
\newblock {Extensions of Jentzsch’s theorem}.
\newblock \emph{Transactions of the American Mathematical Society}, 85\penalty0
  (1):\penalty0 219--227, 1957.

\bibitem[Blumberg et~al.(2020)Blumberg, Carriere, Mandell, Rabadan, and
  Villar]{blumberg2020mrec}
A.~J. Blumberg, M.~Carriere, M.~A. Mandell, R.~Rabadan, and S.~Villar.
\newblock {MREC: a fast and versatile framework for aligning and matching point
  clouds with applications to single cell molecular data}.
\newblock \emph{arXiv preprint arXiv:2001.01666}, 2020.

\bibitem[Bonnans and Shapiro(2013)]{bonnans2013perturbation}
J.~F. Bonnans and A.~Shapiro.
\newblock \emph{Perturbation analysis of optimization problems}.
\newblock Springer Science \& Business Media, 2013.

\bibitem[Braides(2014)]{braides2014local}
A.~Braides.
\newblock \emph{Local minimization, variational evolution and
  {$\Gamma$}-convergence}, volume 2094.
\newblock Springer, 2014.

\bibitem[Br{\'e}zis(2011)]{brezis2011functional}
H.~Br{\'e}zis.
\newblock \emph{Functional analysis, Sobolev spaces and partial differential
  equations}, volume~2.
\newblock Springer, 2011.

\bibitem[Bunne et~al.(2019)Bunne, Alvarez-Melis, Krause, and
  Jegelka]{bunne2019learning}
C.~Bunne, D.~Alvarez-Melis, A.~Krause, and S.~Jegelka.
\newblock Learning generative models across incomparable spaces.
\newblock In \emph{International conference on machine learning}, pages
  851--861. PMLR, 2019.

\bibitem[Carlier and Laborde(2020)]{Carlier2020Differential}
G.~Carlier and M.~Laborde.
\newblock {A differential approach to the multi-marginal Schrödinger system}.
\newblock \emph{SIAM Journal on Mathematical Analysis}, 52\penalty0
  (1):\penalty0 709--717, 2020.

\bibitem[Carlier et~al.(2017)Carlier, Duval, Peyr{\'e}, and
  Schmitzer]{carlier2017convergence}
G.~Carlier, V.~Duval, G.~Peyr{\'e}, and B.~Schmitzer.
\newblock Convergence of entropic schemes for optimal transport and gradient
  flows.
\newblock \emph{SIAM Journal on Mathematical Analysis}, 49\penalty0
  (2):\penalty0 1385--1418, 2017.

\bibitem[Chen et~al.(2023)Chen, Nguyen, and Soh]{chen2023semidefinite}
J.~Chen, B.~T. Nguyen, and Y.~S. Soh.
\newblock Semidefinite relaxations of the gromov-wasserstein distance.
\newblock \emph{arXiv preprint arXiv:2312.14572}, 2023.

\bibitem[Chen et~al.(2022)Chen, Shi, and Yuan]{chen2022gradient}
S.~Chen, B.~Shi, and Y.~Yuan.
\newblock {Gradient norm minimization of Nesterov acceleration: $o(1/k^{3}) $}.
\newblock \emph{arXiv preprint arXiv:2209.08862}, 2022.

\bibitem[Clarke(1975)]{clarke1975generalized}
F.~H. Clarke.
\newblock Generalized gradients and applications.
\newblock \emph{Transactions of the American Mathematical Society},
  205:\penalty0 247--262, 1975.

\bibitem[Clarke(1990)]{clarke1990optimization}
F.~H. Clarke.
\newblock \emph{Optimization and nonsmooth analysis}.
\newblock SIAM, 1990.

\bibitem[Commander(2005)]{Commander2005}
C.~W. Commander.
\newblock A survey of the quadratic assignment problem, with applications.
\newblock \emph{Morehead Electronic Journal of Applicable Mathematics},
  4:\penalty0 MATH--2005--01, 2005.

\bibitem[Cuturi(2013)]{cuturi2013sinkhorn}
M.~Cuturi.
\newblock Sinkhorn distances: lightspeed computation of optimal transport.
\newblock In \emph{Proceedings of the 26th International Conference on Neural
  Information Processing Systems}, pages 2292--2300, 2013.

\bibitem[d'Aspremont(2008)]{Aspremont2008smooth}
A.~d'Aspremont.
\newblock Smooth optimization with approximate gradient.
\newblock \emph{SIAM Journal on Optimization}, 19\penalty0 (3):\penalty0
  1171--1183, 2008.

\bibitem[Delon et~al.(2022)Delon, Desolneux, and Salmona]{delon2022gromov}
J.~Delon, A.~Desolneux, and A.~Salmona.
\newblock Gromov-{W}asserstein distances between {G}aussian distributions.
\newblock \emph{Journal of Applied Probability}, pages 1--21, 2022.

\bibitem[Demetci et~al.(2022)Demetci, Santorella, Sandstede, Noble, and
  Singh]{demetci2020gromov}
P.~Demetci, R.~Santorella, B.~Sandstede, W.~S. Noble, and R.~Singh.
\newblock {SCOT: single-cell multi-omics alignment with optimal transport}.
\newblock \emph{Journal of Computational Biology}, 29\penalty0 (1):\penalty0
  3--18, 2022.

\bibitem[Devolder et~al.(2014)Devolder, Glineur, and
  Nesterov]{devolder2014first}
O.~Devolder, F.~Glineur, and Y.~Nesterov.
\newblock First-order methods of smooth convex optimization with inexact
  oracle.
\newblock \emph{Mathematical Programming}, 146:\penalty0 37--75, 2014.

\bibitem[Dumont et~al.(2022)Dumont, Lacombe, and Vialard]{dumont2022existence}
T.~Dumont, T.~Lacombe, and F.-X. Vialard.
\newblock On the existence of {M}onge maps for the {G}romov-{W}asserstein
  distance.
\newblock \emph{arXiv preprint arXiv:2210.11945}, 2022.

\bibitem[Dvurechensky(2017)]{dvurechensky2017gradient}
P.~Dvurechensky.
\newblock Gradient method with inexact oracle for composite non-convex
  optimization.
\newblock \emph{arXiv preprint arXiv:1703.09180}, 2017.

\bibitem[Dvurechensky et~al.(2018)Dvurechensky, Gasnikov, and
  Kroshnin]{dvurechensky2018computational}
P.~Dvurechensky, A.~Gasnikov, and A.~Kroshnin.
\newblock {Computational optimal transport: complexity by accelerated gradient
  descent is better than by Sinkhorn’s algorithm}.
\newblock In \emph{International conference on machine learning}, pages
  1367--1376. PMLR, 2018.

\bibitem[Eckstein and Nutz(2022)]{eckstein2022quantitative}
S.~Eckstein and M.~Nutz.
\newblock {Quantitative stability of regularized optimal transport and
  convergence of Sinkhorn's algorithm}.
\newblock \emph{SIAM Journal on Mathematical Analysis}, 54\penalty0
  (6):\penalty0 5922--5948, 2022.

\bibitem[Feydy et~al.(2018)Feydy, S{\'e}journ{\'e}, Vialard, Amari, Trouv{\'e},
  and Peyr{\'e}]{feydy2018interpolating}
J.~Feydy, T.~S{\'e}journ{\'e}, F.-X. Vialard, S.-I. Amari, A.~Trouv{\'e}, and
  G.~Peyr{\'e}.
\newblock Interpolating between optimal transport and mmd using sinkhorn
  divergences.
\newblock \emph{arXiv preprint arXiv:1810.08278}, Oct. 2018.

\bibitem[Flamary et~al.(2021)Flamary, Courty, Gramfort, Alaya, Boisbunon,
  Chambon, Chapel, Corenflos, Fatras, Fournier, Gautheron, Gayraud, Janati,
  Rakotomamonjy, Redko, Rolet, Schutz, Seguy, Sutherland, Tavenard, Tong, and
  Vayer]{flamary2021pot}
R.~Flamary, N.~Courty, A.~Gramfort, M.~Z. Alaya, A.~Boisbunon, S.~Chambon,
  L.~Chapel, A.~Corenflos, K.~Fatras, N.~Fournier, L.~Gautheron, N.~T. Gayraud,
  H.~Janati, A.~Rakotomamonjy, I.~Redko, A.~Rolet, A.~Schutz, V.~Seguy, D.~J.
  Sutherland, R.~Tavenard, A.~Tong, and T.~Vayer.
\newblock {POT: Python optimal transport}.
\newblock \emph{Journal of Machine Learning Research}, 22\penalty0
  (78):\penalty0 1--8, 2021.

\bibitem[Franklin and Lorenz(1989)]{franklin1989scaling}
J.~Franklin and J.~Lorenz.
\newblock On the scaling of multidimensional matrices.
\newblock \emph{Linear Algebra and its applications}, 114:\penalty0 717--735,
  1989.

\bibitem[Genevay et~al.(2018)Genevay, Peyr{\'e}, and
  Cuturi]{genevay2018learning}
A.~Genevay, G.~Peyr{\'e}, and M.~Cuturi.
\newblock Learning generative models with {Sinkhorn} divergences.
\newblock In \emph{International Conference on Artificial Intelligence and
  Statistics}, pages 1608--1617. PMLR, 2018.

\bibitem[Genevay et~al.(2019)Genevay, Chizat, Bach, Cuturi, and
  Peyr{\'e}]{genevay2019sample}
A.~Genevay, L.~Chizat, F.~Bach, M.~Cuturi, and G.~Peyr{\'e}.
\newblock Sample complexity of sinkhorn divergences.
\newblock In \emph{Proceedings of the 22nd International Conference on
  Artificial Intelligence and Statistics}, pages 1574--1583, 2019.

\bibitem[Ghadimi and Lan(2016)]{ghadimi2016accelerated}
S.~Ghadimi and G.~Lan.
\newblock Accelerated gradient methods for nonconvex nonlinear and stochastic
  programming.
\newblock \emph{Mathematical Programming}, 156\penalty0 (1-2):\penalty0 59--99,
  2016.

\bibitem[Ghosal et~al.(2022)Ghosal, Nutz, and Bernton]{ghosal2022stability}
P.~Ghosal, M.~Nutz, and E.~Bernton.
\newblock {Stability of entropic optimal transport and Schr{\"o}dinger
  bridges}.
\newblock \emph{Journal of Functional Analysis}, 283\penalty0 (9):\penalty0
  109622, 2022.

\bibitem[Hiriart-Urruty and Lemar{\'e}chal(2004)]{hiriart2004fundamentals}
J.-B. Hiriart-Urruty and C.~Lemar{\'e}chal.
\newblock \emph{Fundamentals of convex analysis}.
\newblock Springer Science \& Business Media, 2004.

\bibitem[Kantorovich(1942)]{kantorovich1942translocation}
L.~V. Kantorovich.
\newblock On the translocation of masses.
\newblock In \emph{Doklady Akademii Nauk USSR}, volume~37, pages 199--201,
  1942.

\bibitem[Koehl et~al.(2023)Koehl, Delarue, and Orland]{koehl2023computing}
P.~Koehl, M.~Delarue, and H.~Orland.
\newblock {Computing the Gromov-Wasserstein distance between two surface meshes
  using optimal transport}.
\newblock \emph{Algorithms}, 16\penalty0 (3):\penalty0 131, 2023.

\bibitem[Le et~al.(2022)Le, Le, Nguyen, Do, Pham, and Ho]{le2022entropic}
K.~Le, D.~Q. Le, H.~Nguyen, D.~Do, T.~Pham, and N.~Ho.
\newblock {Entropic Gromov-Wasserstein between Gaussian distributions}.
\newblock In \emph{International Conference on Machine Learning}, pages
  12164--12203. PMLR, 2022.

\bibitem[M{\'e}moli(2009)]{memoli2009spectral}
F.~M{\'e}moli.
\newblock Spectral {G}romov-{W}asserstein distances for shape matching.
\newblock In \emph{2009 IEEE 12th International Conference on Computer Vision
  Workshops, ICCV Workshops}, pages 256--263. IEEE, 2009.

\bibitem[M{\'e}moli(2011)]{Memoli11}
F.~M{\'e}moli.
\newblock Gromov-{W}asserstein distances and the metric approach to object
  matching.
\newblock \emph{Found. Comput. Math.}, 11\penalty0 (4):\penalty0 417--487,
  2011.

\bibitem[Nesterov(2003)]{nesterov2003introductory}
Y.~Nesterov.
\newblock \emph{Introductory lectures on convex optimization: A basic course},
  volume~87.
\newblock Springer Science \& Business Media, 2003.

\bibitem[Nesterov(2013)]{nesterov2013gradient}
Y.~Nesterov.
\newblock Gradient methods for minimizing composite functions.
\newblock \emph{Mathematical programming}, 140\penalty0 (1):\penalty0 125--161,
  2013.

\bibitem[Nutz(2021)]{nutz2021introduction}
M.~Nutz.
\newblock Introduction to entropic optimal transport.
\newblock \emph{Lecture notes, Columbia University}, 2021.

\bibitem[Nutz and Wiesel(2023)]{nutz2023stability}
M.~Nutz and J.~Wiesel.
\newblock {Stability of Schr{\"o}dinger potentials and convergence of
  Sinkhorn’s algorithm}.
\newblock \emph{The Annals of Probability}, 51\penalty0 (2):\penalty0 699--722,
  2023.

\bibitem[Peyr{\'e} et~al.(2016)Peyr{\'e}, Cuturi, and Solomon]{peyre2016gromov}
G.~Peyr{\'e}, M.~Cuturi, and J.~Solomon.
\newblock {G}romov-{W}asserstein averaging of kernel and distance matrices.
\newblock In \emph{International Conference on Machine Learning}, pages
  2664--2672. PMLR, 2016.

\bibitem[Rockafellar(1997)]{rockafellar1997convex}
R.~T. Rockafellar.
\newblock \emph{Convex analysis}, volume~11.
\newblock Princeton university press, 1997.

\bibitem[Samelson(1957)]{samelson1957Perron}
H.~Samelson.
\newblock {On the Perron-Frobenius theorem.}
\newblock \emph{Michigan Mathematical Journal}, 4\penalty0 (1):\penalty0 57 --
  59, 1957.

\bibitem[Santambrogio(2015)]{santambrogio2010}
F.~Santambrogio.
\newblock \emph{Optimal Transport for Applied Mathematicians}.
\newblock Birkh\"{a}user, 2015.

\bibitem[Scetbon et~al.(2022)Scetbon, Peyr{\'e}, and Cuturi]{scetbon2022linear}
M.~Scetbon, G.~Peyr{\'e}, and M.~Cuturi.
\newblock {Linear-time Gromov- Wasserstein distances using low rank couplings
  and costs}.
\newblock In \emph{International Conference on Machine Learning}, pages
  19347--19365. PMLR, 2022.

\bibitem[S{\'e}journ{\'e} et~al.(2021)S{\'e}journ{\'e}, Vialard, and
  Peyr{\'e}]{sejourne2021unbalanced}
T.~S{\'e}journ{\'e}, F.-X. Vialard, and G.~Peyr{\'e}.
\newblock {The unbalanced Gromov-Wasserstein distance: conic formulation and
  relaxation}.
\newblock \emph{Advances in Neural Information Processing Systems},
  34:\penalty0 8766--8779, 2021.

\bibitem[Shi et~al.(2021)Shi, Du, Jordan, and Su]{shi2021understanding}
B.~Shi, S.~S. Du, M.~I. Jordan, and W.~J. Su.
\newblock Understanding the acceleration phenomenon via high-resolution
  differential equations.
\newblock \emph{Mathematical Programming}, pages 1--70, 2021.

\bibitem[Sinkhorn(1967)]{sinkhorn1967diagonal}
R.~Sinkhorn.
\newblock Diagonal equivalence to matrices with prescribed row and column sums.
\newblock \emph{The American Mathematical Monthly}, 74\penalty0 (4):\penalty0
  402--405, 1967.

\bibitem[Solomon et~al.(2016)Solomon, Peyr{\'e}, Kim, and
  Sra]{solomon2016entropic}
J.~Solomon, G.~Peyr{\'e}, V.~G. Kim, and S.~Sra.
\newblock Entropic metric alignment for correspondence problems.
\newblock \emph{ACM Transactions on Graphics (ToG)}, 35\penalty0 (4):\penalty0
  1--13, 2016.

\bibitem[Sturm(2012)]{sturm2012space}
K.-T. Sturm.
\newblock The space of spaces: curvature bounds and gradient flows on the space
  of metric measure spaces.
\newblock \emph{arXiv preprint arXiv:1208.0434}, 2012.

\bibitem[Tseng(2008)]{tseng2008accelerated}
P.~Tseng.
\newblock On accelerated proximal gradient methods for convex-concave
  optimization.
\newblock \url{https://www.mit.edu/~dimitrib/PTseng/papers/apgm.pdf}, 2008.

\bibitem[Vayer et~al.(2019)Vayer, Chapel, Flamary, Tavenard, and
  Courty]{vayer2018optimal}
T.~Vayer, L.~Chapel, R.~Flamary, R.~Tavenard, and N.~Courty.
\newblock Optimal transport for structured data with application on graphs.
\newblock In \emph{International Conference on Machine Learning (ICML)}, 2019.

\bibitem[Vayer et~al.(2020)Vayer, Flamary, Tavenard, Chapel, and
  Courty]{vayer2020sliced}
T.~Vayer, R.~Flamary, R.~Tavenard, L.~Chapel, and N.~Courty.
\newblock {S}liced {G}romov-{W}asserstein.
\newblock \emph{arXiv preprint arXiv:1905.10124}, 2020.

\bibitem[Villani(2008)]{villani2008optimal}
C.~Villani.
\newblock \emph{Optimal Transport: Old and New}.
\newblock Springer, 2008.

\bibitem[Vincent-Cuaz et~al.(2022)Vincent-Cuaz, Flamary, Corneli, Vayer, and
  Courty]{vincent2022semi}
C.~Vincent-Cuaz, R.~Flamary, M.~Corneli, T.~Vayer, and N.~Courty.
\newblock Semi-relaxed {Gromov Wasserstein} divergence with applications on
  graphs.
\newblock In \emph{International Conference on Learning Representations}, 2022.

\bibitem[Xiao et~al.(2017)Xiao, Rasul, and Vollgraf]{xiao2017fashion}
H.~Xiao, K.~Rasul, and R.~Vollgraf.
\newblock Fashion-mnist: a novel image dataset for benchmarking machine
  learning algorithms.
\newblock \emph{arXiv preprint arXiv:1708.07747}, 2017.

\bibitem[Xu et~al.(2019{\natexlab{a}})Xu, Luo, and Carin]{xu2019scalable}
H.~Xu, D.~Luo, and L.~Carin.
\newblock {Scalable Gromov-Wasserstein learning for graph partitioning and
  matching}.
\newblock \emph{Advances in neural information processing systems}, 32,
  2019{\natexlab{a}}.

\bibitem[Xu et~al.(2019{\natexlab{b}})Xu, Luo, Zha, and Duke]{xu2019gromov}
H.~Xu, D.~Luo, H.~Zha, and L.~C. Duke.
\newblock {Gromov-Wasserstein learning for graph matching and node embedding}.
\newblock In \emph{International conference on machine learning}, pages
  6932--6941. PMLR, 2019{\natexlab{b}}.

\bibitem[Yan et~al.(2018)Yan, Li, Wu, Min, Tan, and Wu]{yan2018semi}
Y.~Yan, W.~Li, H.~Wu, H.~Min, M.~Tan, and Q.~Wu.
\newblock Semi-supervised optimal transport for heterogeneous domain
  adaptation.
\newblock In \emph{IJCAI}, volume~7, pages 2969--2975, 2018.

\bibitem[Yosida(1995)]{yosida1995functional}
K.~Yosida.
\newblock \emph{Functional analysis}.
\newblock Springer Science \& Business Media, 1995.

\bibitem[Zhang et~al.(2022{\natexlab{a}})Zhang, Goldfeld, Mroueh, and
  Sriperumbudur]{Zhang22Gromov}
Z.~Zhang, Z.~Goldfeld, Y.~Mroueh, and B.~K. Sriperumbudur.
\newblock {Gromov-Wasserstein distances: entropic regularization, duality, and
  sample complexity}.
\newblock \emph{arXiv preprint arXiv:2212.12848}, 2022{\natexlab{a}}.

\bibitem[Zhang et~al.(2022{\natexlab{b}})Zhang, Mroueh, Goldfeld, and
  Sriperumbudur]{zhang2022cycle}
Z.~Zhang, Y.~Mroueh, Z.~Goldfeld, and B.~Sriperumbudur.
\newblock Cycle consistent probability divergences across different spaces.
\newblock In \emph{International Conference on Artificial Intelligence and
  Statistics}, pages 7257--7285. PMLR, 2022{\natexlab{b}}.

\end{thebibliography}

\newpage

\appendix

\section{Sharpness of variance bound from \texorpdfstring{\cref{cor:HessianMinimalEigenvalue}}{Corollary 2}} \label{app:VarianceBound}

   Let $\mu_0=\frac{1}{2}\left(\delta_{0}+\delta_{a}\right)$ and $\mu_1=\frac{1}{2}\left(\delta_{0}+\delta_{b}\right)$ for $a\in\RR^{d_0}$ and $b\in\RR^{d_1}$. In this case, any coupling $\pi\in \Pi(\mu_0,\mu_1)$ is of the form $\pi_{00}\delta_{(0,0)}+\pi_{0b}\delta_{(0,b)}+\pi_{a0}\delta_{(a,0)}+\pi_{ab}\delta_{(a,b)}$ with the constraint that $\pi_{00}=\pi_{ab}$ and $\pi_{0b}=\pi_{a0}=\frac{1}{2}-\pi_{ab}$. For any $\bm A\in\cD_M$, $\OT_{\bm A,\varepsilon}(\mu_0,\mu_1)$ is given by  
   \begin{align*}
     &   \inf_{\pi\in\Pi(\mu_0,\mu_1)}\left\{\int-4\|x\|^2\|y\|^2-32 x^{\intercal}\bm Ay\,d\pi(x,y)+\varepsilon\KL(\pi\|\mu_0\otimes \mu_1)\right\}
        \\
        &
     =\inf_{\pi_{ab}\in(0,1/2)}\left\{-\pi_{ab}(4\|a\|^2\|b\|^2+32a^{\intercal}\bm Ab)+2\varepsilon \pi_{ab}\log(4\pi_{ab})+\left(1-2\pi_{ab}\right)\varepsilon\log\left(2-4\pi_{ab}\right)\right\}, 
   \end{align*}
   the objective is a sum of convex functions and the first-order optimality condition reads
   \[
        4\|a\|^2\|b\|^2+32a^{\intercal}\bm Ab=2\varepsilon\log(4\pi_{ab})-2\varepsilon\log(2-4\pi_{ab})\iff {\pi_{ab}}= \frac{e^{z}}{2\left(1+e^{z}\right)},
   \]
   for $z=\left(2\|a\|^2\|b\|^2+16a^{\intercal}\bm Ab\right)/\varepsilon$. Let $\pi^{\star}$ be the corresponding EOT coupling for $\OT_{\bm A,\varepsilon}(\mu_0,\mu_1)$. For any $\bm C\in\RR^{d_0\times d_1}$, 
   \[
   \Var_{\pi^{\star}}[X^{\intercal}\bm C Y]=\pi^{\star}_{ab}(1-\pi^{\star}_{ab})(a^{\intercal}\bm Cb)^2\leq\pi^{\star}_{ab}(1-\pi^{\star}_{ab})\|\bm C\|_F^2\|a\|^2\|b\|^2,
   \]
   with equality for $\bm C=C ab^{\intercal}$ with $C\in \RR$. Hence, 
   \[
   \sup_{\|\bm C\|_F=1}\left\{\Var_{\pi^{\star}}[X^{\intercal}\bm C Y]\right\}=\pi^{\star}_{ab}(1-\pi^{\star}_{ab})\|a\|^2\|b\|^2,
   \]
   which can be made arbitrarily close to $\frac{1}{4}\|a\|^2\|b\|^2$ for fixed $a,b$ by choosing $\bm A\in\cD_M$ and $\varepsilon>0$ as to make $z$ sufficiently large. On the other hand, $\sqrt{M_4(\mu_0)M_4(\mu_1)}=\frac{1}{2}\|a\|^2\|b\|^2$, such that the variance bound in \cref{cor:HessianMinimalEigenvalue} is tight up to a constant factor.

\section{Sinkhorn's Algorithm as an inexact oracle}
\label{app:SinkhornInexactAlg}

Given $\mu_0=\sum_{i=1}^{N_0}{a_i}\delta_{x^{(i)}}\in\mathcal P(\mathbb R^{d_0})$ and $\mu_1=\sum_{j=1}^{N_1}{b_j}\delta_{y^{(j)}}\in\mathcal P(\mathbb R^{d_1})$, let $a,b$ denote the corresponding (positive) probability vectors.
Fix an underlying cost function $c:\mathbb R^{d_0}\times \mathbb R^{d_1}\to \mathbb R$ and $\varepsilon>0$, and let 
 $\bm K\in\mathbb R^{N_0\times N_1}$ with $\bm K_{ij}=e^{-\frac{c\left( x^{(i)},y^{(j)}\right)}{\varepsilon}}$.  
Consider the standard implementation of Sinkhorn's algorithm (cf. e.g. \citealt{cuturi2013sinkhorn,flamary2021pot}).

\begin{algorithm}
\caption{Sinkhorn Algorithm}
\label{alg:SinkhornAlg}
\begin{algorithmic}[1]
    \State Fix a threshold $\gamma$ and a maximum iteration number $k_{\max}$.
    \State $u_0\gets \mathds{1}_{N_0}/N_0$
\State $k\gets 1$
\Repeat
\State $v_k\gets b/(\bm K^{\intercal}u_{k-1})$
\State $u_k\gets a/(\bm K v_k)$
\State $\bm \Pi^k\gets \diag(u_k)\bm K\diag(v_k)$
\State $k\gets k+1$
\Until $\|(\bm \Pi^k)^{\intercal}\mathds{1}_{N_0} -b\|_2<\gamma$ or $k>k_{\max}$
\State \textbf{return} $\bm \Pi^k$
\end{algorithmic}
\end{algorithm}
In \cref{alg:SinkhornAlg} and the remainder of this section, the division of two vectors is understood componentwise. The stopping condition is based only on one of the marginal constraints, as $\bm \Pi^k\mathds{1}_{N_1}=a$ by construction. 

\medskip
The following definitions enable describing the convergence properties of \cref{alg:SinkhornAlg}; we follow the approach of \citet{franklin1989scaling} with only minor modifications.
Let $\mathbb R_+^d$ denote the set of vectors with positive entries and, for $x,y\in\mathbb R_+^d$, let  
\[
    \mathsf d_H(x,y)=\log \max_{1\leq i,j\leq d}\frac{x_iy_j}{y_ix_j},
\]
denote Hilbert's projective metric\footnote{$\mathsf d_H(x,y)=0$ if and only if $x=\alpha y$ for $\alpha>0$, 
 $\mathsf d_H$ is symmetric and satisfies the triangle inequality.} on $\mathbb R_+^d$. By definition, 
 \begin{equation}
     \label{eq:HilbertProjProp}
     \mathsf d_H(x,y)= \mathsf d_H(x/y,\mathds{1}_d),
 \end{equation}
 for any $x,y\in\mathbb R^{d}_+$ and, setting $x=e^w,y=e^z$ componentwise, 
 \begin{equation}
     \label{eq:projMetricAlt}
 \begin{aligned}
     \mathsf d_{H}(x,y)&=\log \max_{1\leq i,j\leq d}e^{w_i+z_j-w_j-z_i},
     \\
     &=\max_{1\leq i,j\leq d}w_i+z_j-w_j-z_i,
     \\
     &=\max_{1\leq i\leq d}(\log x_i-\log y_i)-\min_{1\leq i\leq d}(\log x_i-\log y_i),
     \\
     &=\max_{1\leq i\leq d}\log\left(\frac{x_i}{y_i}\right)-\min_{1\leq i\leq d}\log\left(\frac{x_i}{y_i}\right).
 \end{aligned}
 \end{equation}
 It was proved in \citet{birkhoff1957extensions,samelson1957Perron} that multiplication with a positive matrix is a strict contraction w.r.t. $\mathsf d_{H}$. Precisely, 
\begin{equation}
\label{eq:contraction}
    \mathsf d_{H}(\bm Ax,\bm Ay)\leq \lambda(\bm A)\mathsf d_{H}(x,y), 
\end{equation}
for any $\bm A\in \mathbb R^{d'\times d}_+$ and $x,y\in\mathbb R^{d}_+$, where 
\[
    \lambda(\bm A)=\frac{\sqrt{\eta(\bm A)}-1}{\sqrt{\eta(\bm A)}+1}<1,\quad \eta(\bm A)=\max_{\substack{1\leq i,j\leq d'\\1\leq k,l\leq d}}\frac{\bm A_{ik}\bm A_{jl}}{\bm A_{jk}\bm A_{il}}.
\]
Of note is that $\lambda(\bm A)=\lambda(\bm A^{\intercal})$. Let 
\[
    E= \{\bm A\in\mathbb R^{N_0\times N_1}_+:\bm A=\diag(x)\bm K\diag(y)\text{ for some } x\in\mathbb R_+^{N_0},y\in\mathbb R_+^{N_1} \},
\]
and observe that if $\bm A,\bm B\in E$, there exists $x_{\bm A,\bm B}\in\mathbb R_+^{N_0}, y_{\bm A,\bm B}\in\mathbb R_+^{N_1}$ for which $\bm A=\diag(x_{\bm A,\bm B})\bm B\diag(y_{\bm A,\bm B})$. In this setting, let $\mathsf d:E\times E\mapsto [0,\infty)$ be such that 
\[
    \mathsf d(\bm A,\bm B)=\mathsf d_{H}(x_{\bm A,\bm B},\mathds 1_{N_0})+\mathsf d_{H}(y_{\bm A,\bm B},\mathds 1_{N_1}),
\]
then $\mathsf d$ is a metric on $E$. As the EOT coupling $\bm \Pi^{\star}$ satisfies
\[
    \frac{\bm \Pi^{\star}_{ij}}{a_ib_j}=e^{\frac{\varphi\left(x^{(i)}\right)+\psi\left(y^{(j)}\right)-c\left(x^{(i)},y^{(j)}\right)}{\varepsilon}},
\]
where $(\varphi,\psi)$ is any pair of EOT potentials, $\bm \Pi^{\star}=\diag(u^{\star})\bm K\diag(v^{\star})\in E$ for 
\[
    u^{\star}_i=a_ie^{\frac{\varphi\left(x^{(i)}\right)}{\varepsilon}},\quad v^{\star}_j=b_je^{\frac{\psi\left(y^{(j)}\right)}{\varepsilon}}. 
\]
Note that $u^{\star}=a/(\bm Kv^{\star})$ and $v^{\star}=b/(\bm K^{\intercal}u^{\star})$. 

In the sequel, we analyze the convergence of $\bm \Pi^k$ to $\bm \Pi^{\star}$ under $\mathsf d$. The following result  
translates bounds on $\mathsf d(\bm \Pi^k,\bm \Pi^{\star})$ to bounds on $\|\bm \Pi^k-\bm \Pi^{\star}\|_{\infty}$.

\begin{lemma}
\label{lem:errEstimate}
    Fix $\delta>0$. If $\mathsf d(\bm \Pi^k,\bm \Pi^{\star})\leq \delta$, it follows that 
    $
    \|\bm \Pi^k-\bm \Pi^{\star}\|_{\infty}\leq e^{\delta}-1.
    $
\end{lemma}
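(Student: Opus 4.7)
The plan is to express $\bm\Pi^k$ as a two-sided diagonal rescaling of $\bm\Pi^\star$, interpret the hypothesis via \eqref{eq:projMetricAlt}, and then leverage the row-sum invariant of \cref{alg:SinkhornAlg} to pin down the product of the scalings pointwise. Since $\bm\Pi^k,\bm\Pi^\star\in E$, we can write $\bm\Pi^k=\diag(\alpha)\bm\Pi^\star\diag(\beta)$ for some $\alpha\in\mathbb R^{N_0}_+$, $\beta\in\mathbb R^{N_1}_+$, determined only up to the gauge $(\alpha,\beta)\mapsto(c\alpha,\beta/c)$. By \eqref{eq:projMetricAlt}, the hypothesis $\mathsf d(\bm\Pi^k,\bm\Pi^\star)\le\delta$ reads
\[
(M_\alpha-m_\alpha)+(M_\beta-m_\beta)\le\delta,
\]
where $M_\alpha=\max_i\log\alpha_i$, $m_\alpha=\min_i\log\alpha_i$, and analogously for $\beta$.

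The second step is to exploit the fact that Sinkhorn's update forces both $\bm\Pi^k$ and $\bm\Pi^\star$ to have row marginal equal to $a$. Since $\bm\Pi^k_{ij}=\alpha_i\beta_j\bm\Pi^\star_{ij}$, summing over $j$ gives $\alpha_i\sum_j\beta_j(\bm\Pi^\star_{ij}/a_i)=1$ for every $i$. As $(\bm\Pi^\star_{ij}/a_i)_j$ is a probability vector (positive entries summing to one), this constraint forces $\alpha_i$ to lie in $[1/\max_j\beta_j,\,1/\min_j\beta_j]$, i.e., $-M_\beta\le\log\alpha_i\le -m_\beta$ uniformly in $i$.

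Combining this two-sided control on $\log\alpha_i$ with the trivial bounds $m_\beta\le\log\beta_j\le M_\beta$ yields
\[
|\log(\alpha_i\beta_j)|=|\log\alpha_i+\log\beta_j|\le M_\beta-m_\beta\le\delta
\]
for every $(i,j)$, and hence $|\alpha_i\beta_j-1|\le e^\delta-1$ (noting that $1-e^{-\delta}\le e^\delta-1$). Finally, since $\bm\Pi^\star_{ij}\le 1$ (entries of a probability matrix are bounded by their total mass),
\[
|\bm\Pi^k_{ij}-\bm\Pi^\star_{ij}|=|\alpha_i\beta_j-1|\,\bm\Pi^\star_{ij}\le e^\delta-1,
\]
which is the claim.

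The main subtlety to keep in mind is the projective-gauge ambiguity in $(\alpha,\beta)$: only the products $\alpha_i\beta_j$ and the \textbf{ranges} $M_\alpha-m_\alpha$, $M_\beta-m_\beta$ are intrinsic, so any direct bound on $|\log\alpha_i|$ in isolation would be meaningless absent a normalization. The row-sum invariance of the Sinkhorn iterates provides exactly the normalization needed, tying the ranges of $\alpha$ and $\beta$ together so that the combination $\alpha_i\beta_j$ is controlled pointwise rather than merely in range; this is the one nontrivial step, with the remainder being a routine translation from a log-scale bound to the infinity norm.
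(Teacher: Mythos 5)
Your proof is correct, but it takes a genuinely different (and more self-contained) route than the paper's. The paper invokes Lemma~3 of \citet{franklin1989scaling} as a black box to pass directly from $\mathsf d(\bm\Pi^k,\bm\Pi^\star)\le\delta$ to the pointwise ratio bound $e^{-\delta}\le \bm\Pi^\star_{ij}/\bm\Pi^k_{ij}\le e^\delta$, after which the claim follows by multiplying through by $\bm\Pi^k_{ij}\le 1$. You instead re-derive that ratio bound from first principles: you factor $\bm\Pi^k=\diag(\alpha)\bm\Pi^\star\diag(\beta)$, observe via \eqref{eq:projMetricAlt} that the hypothesis bounds the \emph{ranges} of $\log\alpha$ and $\log\beta$, and then use the row-sum invariant $\bm\Pi^k\mathds 1_{N_1}=\bm\Pi^\star\mathds 1_{N_1}=a$ (which holds for $k\ge 1$ by the $u_k\gets a/(\bm K v_k)$ update) to kill the projective gauge freedom and pin $\log\alpha_i\in[-M_\beta,-m_\beta]$ uniformly in $i$, whence $|\log(\alpha_i\beta_j)|\le M_\beta-m_\beta\le\delta$. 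That is exactly the content of Franklin--Lorenz's Lemma~3 in this setting, and your observation that the single matched marginal is what resolves the gauge ambiguity is the key insight there as well. What you gain is a short, self-contained argument that makes explicit why the pointwise control appears; what you give up, relative to the paper, is brevity — citing the lemma is a one-liner.
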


\begin{proof}
\medskip
    By Lemma 3 in \citet{franklin1989scaling}, whenever $\mathsf d(\bm\Pi^k,\bm \Pi^{\star})\leq \delta$ it holds
    that
    \[
        e^{-\delta}-1\leq   \frac{\bm \Pi^{\star}_{ij}}{\bm \Pi^{k}_{ij}}-1\leq e^{\delta}-1,
    \]
    for every $1\leq i\leq N_0,1\leq j\leq N_1$. As such,
    \begin{align*}
        |\bm \Pi^{\star}_{ij}-\bm \Pi^{k}_{ij}|\leq \bm \Pi_{ij}^k\left((1-e^{-\delta})\vee(e^{\delta}-1) \right)\leq (1-e^{-\delta})\vee(e^{\delta}-1)=e^{\delta}-1, 
    \end{align*}
    yielding $\|\bm \Pi^{\star}-\bm \Pi^k\|_{\infty}\leq e^{\delta}-1$.
\end{proof}

The number of iterations required to achieve $\mathsf d(\bm \Pi^k,\bm \Pi^{\star})\leq \delta$ can be bounded as follows.

   \begin{proposition}
   \label{prop:Iter}
       Let $\bm \Pi^k$ be given by \cref{alg:SinkhornAlg} and fix $\delta>0$. Then, if 
       \[
       \begin{gathered}
       k\geq 1+\frac{1}{2\log\left(\lambda(\bm K)\right)}{\log\left(\frac{\delta(1-\lambda(\bm K))}{\mathsf d_H((\bm \Pi^1)^{\intercal}\mathds{1}_{N_0} ,b)}\right)},
      \end{gathered} 
       \] it follows that
       $\mathsf d(\bm \Pi^k,\bm \Pi^{\star})\leq \delta$. 
   \end{proposition}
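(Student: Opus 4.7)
The plan is to iterate the Birkhoff contraction for Hilbert's projective metric to obtain a geometric decay of $\mathsf d(\bm \Pi^k, \bm \Pi^{\star})$ and then control the ``initial gap'' $\mathsf d(\bm \Pi^1, \bm \Pi^{\star})$ in terms of the marginal residual $\mathsf d_H\bigl((\bm \Pi^1)^{\intercal}\mathds{1}_{N_0},b\bigr)$.

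First, writing $\bm \Pi^k=\diag(u_k)\bm K\diag(v_k)$ and $\bm \Pi^{\star}=\diag(u^{\star})\bm K\diag(v^{\star})$, one reads off from the definition of $\mathsf d$ together with \eqref{eq:HilbertProjProp} that
\[
\mathsf d(\bm \Pi^k,\bm \Pi^{\star})= \mathsf d_H(u_k,u^{\star})+\mathsf d_H(v_k,v^{\star}).
\]
The update $u_k=a/(\bm K v_k)$ combined with the fixed-point identity $u^{\star}=a/(\bm K v^{\star})$ gives $u_k/u^{\star}=(\bm K v^{\star})/(\bm K v_k)$, whence $\mathsf d_H(u_k,u^{\star})=\mathsf d_H(\bm K v_k,\bm K v^{\star})\leq \lambda(\bm K)\,\mathsf d_H(v_k,v^{\star})$ by \eqref{eq:contraction}. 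Symmetrically, the update $v_k=b/(\bm K^{\intercal}u_{k-1})$ and $\lambda(\bm K^{\intercal})=\lambda(\bm K)$ yield $\mathsf d_H(v_k,v^{\star})\leq \lambda(\bm K)\,\mathsf d_H(u_{k-1},u^{\star})$. Chaining these two bounds for $k\ge 2$ gives the per-iteration contraction
\[
\mathsf d(\bm \Pi^k,\bm \Pi^{\star})\leq \lambda(\bm K)^2\,\mathsf d(\bm \Pi^{k-1},\bm \Pi^{\star}),
\]
and therefore $\mathsf d(\bm \Pi^k,\bm \Pi^{\star})\leq \lambda(\bm K)^{2(k-1)}\mathsf d(\bm \Pi^1,\bm \Pi^{\star})$.

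Next, I would bound $\mathsf d(\bm \Pi^1,\bm \Pi^{\star})$ by the marginal residual. The key identity is that, writing $v_2=b/(\bm K^{\intercal}u_1)$ for the ``would-be'' next $v$-iterate, one has $b=v_2\odot(\bm K^{\intercal}u_1)$ whereas $(\bm \Pi^1)^{\intercal}\mathds 1_{N_0}=v_1\odot(\bm K^{\intercal}u_1)$; the common factor $\bm K^{\intercal}u_1$ cancels by \eqref{eq:HilbertProjProp}, giving
\[
\mathsf d_H\bigl((\bm \Pi^1)^{\intercal}\mathds 1_{N_0},b\bigr)=\mathsf d_H(v_1,v_2).
\]
Applying the contraction from the previous paragraph, $\mathsf d_H(v_2,v^{\star})\le\lambda(\bm K)^2\mathsf d_H(v_1,v^{\star})$, so the triangle inequality gives $\mathsf d_H(v_1,v^{\star})\leq \mathsf d_H(v_1,v_2)/(1-\lambda(\bm K)^2)$. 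Using $\mathsf d_H(u_1,u^{\star})\leq \lambda(\bm K)\mathsf d_H(v_1,v^{\star})$ once more,
\[
\mathsf d(\bm \Pi^1,\bm \Pi^{\star})\leq (1+\lambda(\bm K))\mathsf d_H(v_1,v^{\star})\leq \frac{\mathsf d_H\bigl((\bm \Pi^1)^{\intercal}\mathds 1_{N_0},b\bigr)}{1-\lambda(\bm K)}.
\]

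Combining the two displays yields
\[
\mathsf d(\bm \Pi^k,\bm \Pi^{\star})\leq \lambda(\bm K)^{2(k-1)}\,\frac{\mathsf d_H\bigl((\bm \Pi^1)^{\intercal}\mathds 1_{N_0},b\bigr)}{1-\lambda(\bm K)}.
\]
Requiring the right-hand side to be at most $\delta$, taking logarithms, and remembering that $\log\lambda(\bm K)<0$ (so the inequality flips upon division) produces exactly the iteration bound claimed in the proposition. The main obstacle is the identity $\mathsf d_H\bigl((\bm \Pi^1)^{\intercal}\mathds 1_{N_0},b\bigr)=\mathsf d_H(v_1,v_2)$; once this is observed, everything else is a routine application of the Birkhoff contraction.
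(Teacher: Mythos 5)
Your proof is correct and follows the same route as the paper: it establishes the per-step Birkhoff contraction of $\mathsf d_H(u_k,u^\star)$ and $\mathsf d_H(v_k,v^\star)$, uses the triangle inequality to bound $\mathsf d_H(v_1,v^\star)$ by $\mathsf d_H(v_1,v_2)/(1-\lambda(\bm K)^2)$, and identifies $\mathsf d_H(v_1,v_2)$ with $\mathsf d_H((\bm\Pi^1)^{\intercal}\mathds 1_{N_0},b)$ via cancellation of the common factor $\bm K^{\intercal}u_1$. These are exactly the paper's Lemmas \ref{lem:iterProgPotentials} and \ref{lem:iterProgPlan}, which you have rederived inline before taking logarithms to solve for $k$.
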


The proof of \cref{prop:Iter} follows immediately from \cref{lem:iterProgPlan} ahead. We first prove an auxiliary lemma. 

\begin{lemma}
\label{lem:iterProgPotentials}
    For $k\geq 1$, the iterates $u_k,v_k$ of \cref{alg:SinkhornAlg} satisfy 
    \begin{gather*} 
       \mathsf d_H(u_{k+1},u^{\star})\leq \lambda(\bm K)^2\mathsf d_H(u_k,u^{\star}),\quad \mathsf d_H(v_{k+1},v^{\star})\leq \lambda(\bm K)^2\mathsf d_H(v_k,v^{\star}), 
    \\
        \mathsf d_H(u_k,u^{\star})\leq \frac{\mathsf d_{H}(u_k,u_{k+1})}{1-\lambda(\bm K)^2},\quad \mathsf d_H(v_k,v^{\star})\leq \frac{\mathsf d_{H}(v_k,v_{k+1})}{1-\lambda(\bm K)^2}.  
    \end{gather*} 
\end{lemma}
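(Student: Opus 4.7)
The plan is to combine two facts about Hilbert's projective metric: the invariance identity $\mathsf d_H(a/x,a/y)=\mathsf d_H(x,y)$ for any common positive vector $a$, and the Birkhoff–Samelson contraction $\mathsf d_H(\bm M x, \bm M y)\leq \lambda(\bm M)\mathsf d_H(x,y)$ recorded in \eqref{eq:contraction}. The updates in \cref{alg:SinkhornAlg} have the form $u_{k+1}=a/(\bm K v_{k+1})$ and $v_{k+1}=b/(\bm K^\intercal u_k)$, and the optimum satisfies $u^\star=a/(\bm K v^\star)$ and $v^\star=b/(\bm K^\intercal u^\star)$. Thus the one–step map on the $u$–coordinate (resp. the $v$–coordinate) is the composition of a multiplication by $\bm K^\intercal$, an entrywise reciprocal-rescaling by $b$, a multiplication by $\bm K$, and a reciprocal-rescaling by $a$. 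The reciprocal-rescaling steps are $\mathsf d_H$-isometries, while each of the two matrix multiplications contracts by a factor $\lambda(\bm K)=\lambda(\bm K^\intercal)$; iterating thus yields the $\lambda(\bm K)^2$ contraction.

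Concretely, I would first record the invariance $\mathsf d_H(a/x,a/y)=\mathsf d_H(x,y)$ as an immediate consequence of the explicit formula \eqref{eq:projMetricAlt}, noting that entrywise division by a common vector translates $\log(x_i/y_i)$ by a constant $-\log(a_i)+\log(a_i)=0$ only after taking reciprocals, so more carefully: $\log((a_i/x_i)/(a_i/y_i)) = -\log(x_i/y_i)$, which swaps $\max$ and $-\min$ and leaves the difference in \eqref{eq:projMetricAlt} unchanged. I would then chain the identities
\[
\mathsf d_H(u_{k+1},u^\star)=\mathsf d_H\!\left(\tfrac{a}{\bm K v_{k+1}},\tfrac{a}{\bm K v^\star}\right)=\mathsf d_H(\bm K v_{k+1},\bm K v^\star)\leq \lambda(\bm K)\mathsf d_H(v_{k+1},v^\star),
\]
and analogously $\mathsf d_H(v_{k+1},v^\star)\leq \lambda(\bm K)\mathsf d_H(u_k,u^\star)$ as well as $\mathsf d_H(u_k,u^\star)\leq \lambda(\bm K)\mathsf d_H(v_k,v^\star)$. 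Composing any two consecutive such bounds delivers the first pair of inequalities claimed in the lemma.

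For the second pair, I would apply the triangle inequality for $\mathsf d_H$:
\[
\mathsf d_H(u_k,u^\star)\leq \mathsf d_H(u_k,u_{k+1})+\mathsf d_H(u_{k+1},u^\star)\leq \mathsf d_H(u_k,u_{k+1})+\lambda(\bm K)^2\mathsf d_H(u_k,u^\star),
\]
and similarly for $v$. Rearranging (and using that $\lambda(\bm K)<1$, so $1-\lambda(\bm K)^2>0$) gives the claimed estimates.

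The argument is essentially a bookkeeping exercise once the two core properties of $\mathsf d_H$ are in hand, so I do not anticipate a serious obstacle; the one point that needs a small care is verifying the invariance $\mathsf d_H(a/x,a/y)=\mathsf d_H(x,y)$ from \eqref{eq:projMetricAlt}, since entrywise inversion swaps the roles of $\max$ and $\min$. Beyond that, the proof is purely algebraic and relies only on \eqref{eq:HilbertProjProp}, \eqref{eq:projMetricAlt}, and \eqref{eq:contraction}.
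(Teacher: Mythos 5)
Your proof is correct and follows essentially the same route as the paper's: the invariance of $\mathsf d_H$ under componentwise reciprocal rescaling combined with the Birkhoff--Samelson contraction \eqref{eq:contraction} gives the $\lambda(\bm K)^2$ one-step contraction, and the triangle inequality plus that contraction yields the second pair of bounds. The paper cites the reciprocal-invariance more tersely (via symmetry of $\mathsf d_H$ and \eqref{eq:HilbertProjProp}) while you spell it out from \eqref{eq:projMetricAlt}, but the argument is otherwise the same.
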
 

\begin{proof}
To prove the first claim, we have by \eqref{eq:contraction} that 
   \[ 
    \mathsf d_H(u_{k+1},u^{\star})=\mathsf d_H\left(a/(\bm Kv_{k+1}),a/(\bm Kv^{\star})\right)=\mathsf d_H(\bm Kv^{\star},\bm Kv_{k+1})\leq \lambda(\bm K)\mathsf d_H(v_{k+1},v^{\star}).
    \]
    It follows similarly that $\mathsf d_H(v_{k+1},v^{\star})\leq \lambda(\bm K)\mathsf d_H(u_{k},u^{\star})$. Combining these bounds, 
    \[
    \mathsf d_H(u_{k+1},u^{\star})\leq \lambda(\bm K)^2\mathsf d_H(u_{k},u^{\star}),\quad \mathsf d_H(v_{k+1},v^{\star})\leq \lambda(\bm K)^2\mathsf d_H(v_{k},v^{\star}),
    \] 
    which proves the first claim. Applying 
     the triangle inequality yields
    \[
        \mathsf d_H(u_{k},u^{\star})\leq \mathsf d_H(u_{k+1},u^{\star})+\mathsf d_H(u_k,u_{k+1})\leq \lambda(\bm K)^2\mathsf d_H(u_{k},u^{\star})+\mathsf d_H(u_k,u_{k+1}),
    \]
   such that $(1-\lambda(\bm K)^2)\mathsf d_H(u_{k},u^{\star})\leq\mathsf d_H(u_k,u_{k+1})$ which proves the second claim; the same argument holds for the iterates $v_k$. 
\end{proof}

\cref{lem:iterProgPlan} translates the bound from
\cref{lem:iterProgPotentials} to a bound on $\mathsf d(\Pi^k,\Pi^{\star})$. We introduce the notation  $\odot$ to denote the componentwise product of vectors.

\begin{lemma}
    \label{lem:iterProgPlan}
    For $k\geq 2$, $\mathsf d(\bm \Pi^k,\bm \Pi^{\star})\leq \frac{\lambda(\bm K)^{2(k-1)}}{1-\lambda(\bm K)}\mathsf d_H((\bm \Pi^1)^{\intercal}\mathds{1}_{N_0} ,b)$. 
\end{lemma}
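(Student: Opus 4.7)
The starting point is to recognize that both $\bm\Pi^k=\diag(u_k)\bm K\diag(v_k)$ and $\bm\Pi^\star=\diag(u^\star)\bm K\diag(v^\star)$ live in $E$, with $\bm\Pi^k=\diag(u_k/u^\star)\bm\Pi^\star\diag(v_k/v^\star)$. By the definition of $\mathsf d$ together with property \eqref{eq:HilbertProjProp}, this immediately gives
\[
\mathsf d(\bm\Pi^k,\bm\Pi^\star)=\mathsf d_H(u_k/u^\star,\mathds 1_{N_0})+\mathsf d_H(v_k/v^\star,\mathds 1_{N_1})=\mathsf d_H(u_k,u^\star)+\mathsf d_H(v_k,v^\star),
\]
so it suffices to control each of the two potential errors individually.

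Next, I would iterate the contraction bound from \cref{lem:iterProgPotentials} to obtain
\[
\mathsf d_H(u_k,u^\star)\leq \lambda(\bm K)^{2(k-1)}\mathsf d_H(u_1,u^\star),\qquad \mathsf d_H(v_k,v^\star)\leq \lambda(\bm K)^{2(k-1)}\mathsf d_H(v_1,v^\star),
\]
valid for $k\geq 1$. To link $u_1$ back to $v_1$, I would use that $u_1=a/(\bm Kv_1)$ and $u^\star=a/(\bm Kv^\star)$, so \eqref{eq:contraction} yields $\mathsf d_H(u_1,u^\star)=\mathsf d_H(\bm Kv^\star,\bm Kv_1)\leq\lambda(\bm K)\mathsf d_H(v_1,v^\star)$. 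Consequently,
\[
\mathsf d(\bm\Pi^k,\bm\Pi^\star)\leq \lambda(\bm K)^{2(k-1)}\bigl(1+\lambda(\bm K)\bigr)\mathsf d_H(v_1,v^\star).
\]

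The main technical step is to rewrite $\mathsf d_H(v_1,v_2)$ in terms of the stopping criterion. By the Sinkhorn update $v_2=b/(\bm K^{\intercal}u_1)$ and since $(\bm\Pi^1)^{\intercal}\mathds 1_{N_0}=v_1\odot(\bm K^{\intercal}u_1)$, we get $v_2/v_1=b/((\bm\Pi^1)^{\intercal}\mathds 1_{N_0})$, so another application of \eqref{eq:HilbertProjProp} gives
\[
\mathsf d_H(v_1,v_2)=\mathsf d_H\bigl((\bm\Pi^1)^{\intercal}\mathds 1_{N_0},b\bigr).
\]
Plugging this into the second bound of \cref{lem:iterProgPotentials} yields $\mathsf d_H(v_1,v^\star)\leq(1-\lambda(\bm K)^2)^{-1}\mathsf d_H((\bm\Pi^1)^{\intercal}\mathds 1_{N_0},b)$.

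Combining everything and using the factorization $1-\lambda(\bm K)^2=(1-\lambda(\bm K))(1+\lambda(\bm K))$, the $(1+\lambda(\bm K))$ factors cancel and the claimed bound $\mathsf d(\bm\Pi^k,\bm\Pi^\star)\leq \lambda(\bm K)^{2(k-1)}\mathsf d_H((\bm\Pi^1)^{\intercal}\mathds 1_{N_0},b)/(1-\lambda(\bm K))$ follows. The only nontrivial step is the identification of $\mathsf d_H(v_1,v_2)$ with the stopping residual, which is what allows translating progress in Hilbert's projective metric (an intrinsic quantity) into a concrete, computable stopping condition.
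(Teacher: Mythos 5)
Your proof is correct and takes essentially the same route as the paper's. Both reduce $\mathsf d(\bm\Pi^k,\bm\Pi^\star)$ to $\mathsf d_H(u_k,u^\star)+\mathsf d_H(v_k,v^\star)$, use \cref{lem:iterProgPotentials} (and a one-step contraction from its proof) to obtain the factor $\lambda(\bm K)^{2(k-1)}(1+\lambda(\bm K))\mathsf d_H(v_1,v^\star)$, bound $\mathsf d_H(v_1,v^\star)$ by $(1-\lambda(\bm K)^2)^{-1}\mathsf d_H(v_1,v_2)$, and then identify $\mathsf d_H(v_1,v_2)=\mathsf d_H((\bm\Pi^1)^{\intercal}\mathds 1_{N_0},b)$; you simply spell out the intermediate contraction steps that the paper compresses into ``follows from \cref{lem:iterProgPotentials} and its proof.''
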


\begin{proof}
   As $\bm \Pi^k=\diag(u_k)\bm K\diag(v_k)$ and $\bm \Pi^{\star}=\diag(u^{\star})\bm K\diag(v^{\star})$,  
  \[ 
   \begin{aligned}
        \mathsf d(\bm \Pi^k,\bm \Pi^{\star})&=\mathsf d_H(u_k,u^{\star})+\mathsf d_H(v_k,v^{\star})
        \\
        &\leq \lambda(\bm K)^{2(k-1)}(1+\lambda(K))d_H(v_1,v^{\star})
        \\
        &\leq \frac{\lambda(\bm K)^{2(k-1)}}{1-\lambda(\bm K)}d_H(v_1,v_2), 
   \end{aligned}
  \] 
   where both inequalities follow from \cref{lem:iterProgPotentials} and its proof. Finally,
   \[
        \mathsf d_H(v_1,v_2)= \mathsf d_H\left(v_1,\frac{b}{\bm K^{\intercal}u_1}\right)=\mathsf d_H\left(v_1\odot \bm K^{\intercal}u_1,{b}\right)=\mathsf d_H((\bm \Pi^1)^{\intercal}\mathds{1}_{N_0} ,b).
   \]
\end{proof}

Now, we demonstrate why the termination condition based on the $2$-norm endows us with a $\delta'$-oracle approximation and provide a bound on the number of iterations required to achieve it. Theorem 1 in \citet{dvurechensky2018computational} proves that there exists $\bar k\leq 1+\frac{R}{\gamma}$ satisfying  
\[ \|u_{\bar k}\odot \bm K v_{\bar k+1} -a\|_1+\|(\bm \Pi^k)^{\intercal}\mathds{1}_{N_0} -b\|_1\leq \gamma, \]
for $R=-{2\log\left(e^{-\|\bm C\|_{\infty}/\varepsilon}\min_{\substack{1\leq i\leq N_0\\1\leq j\leq N_1}}a_i\wedge b_j\right)}$. This gives a bound on the maximal number of iterations to achieve the $2$-norm termination condition via the standard inequality $\|\cdot\|_2\leq \|\cdot\|_1$. We clarify that the analysis in \citet{dvurechensky2018computational} is for a slightly different implementation of Sinkhorn's algorithm. First, running  \cref{alg:SinkhornAlg} is tantamount to running their algorithm with reversed marginals. Next, one iteration of \cref{alg:SinkhornAlg} corresponds to two iterations in their analysis. Finally, their approach uses $\mathds 1_{N_0}$ rather than $\mathds 1_{N_0}/N_0$ for the initialization. This difference is innocuous for achieving the termination condition, as the iterates are identical up to multiplying $v_k$ by $N_0$ and dividing $u_k$ by $N_0$; $\bm \Pi^k$ is invariant this operation.   

We now bound $\mathsf d_{H}$ in terms of the Euclidean distance with the aim of controlling $\mathsf d(\bm \Pi^k,\bm \Pi^{\star})$ by $\|(\bm \Pi^k)^{\intercal}\mathds{1}_{N_0} -b\|_2$.   

\begin{lemma}
    \label{lem:normbounds}
    Let $r,s \in \mathbb R^d_+$ be arbitrary,
    then 
    \begin{align*}
        \mathsf d_{H}(s,r)&\leq (r_{i^{\star}}^{-1}+ s_{i_{\star}}^{-1} )\|r-s\|_2,
    \end{align*}
    where $i^*\in \argmax_{1\leq i \leq d} \frac{s_i}{r_i}$ and $i_*\in \argmin_{1\leq i \leq d} \frac{s_i}{r_i}$.
\end{lemma}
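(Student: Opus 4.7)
The plan is to use the alternative representation of Hilbert's projective metric provided in \eqref{eq:projMetricAlt}, namely
\[
\mathsf d_H(s,r)=\max_{1\le i\le d}\log\!\left(\frac{s_i}{r_i}\right)-\min_{1\le i\le d}\log\!\left(\frac{s_i}{r_i}\right)=\log\!\left(\frac{s_{i^{\star}}}{r_{i^{\star}}}\right)+\log\!\left(\frac{r_{i_{\star}}}{s_{i_{\star}}}\right),
\]
so the task reduces to bounding each of the two logarithmic terms separately in terms of $\|r-s\|_2$.

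The main tool will be the elementary inequality $\log x\le x-1$ for all $x>0$, i.e.\ $\log(a/b)\le (a-b)/b$ for any $a,b>0$ (regardless of whether $a\ge b$ or not). Applying this to $a=s_{i^{\star}},b=r_{i^{\star}}$ yields
\[
\log\!\left(\frac{s_{i^{\star}}}{r_{i^{\star}}}\right)\le\frac{s_{i^{\star}}-r_{i^{\star}}}{r_{i^{\star}}}\le\frac{|s_{i^{\star}}-r_{i^{\star}}|}{r_{i^{\star}}},
\]
and analogously $\log(r_{i_{\star}}/s_{i_{\star}})\le |r_{i_{\star}}-s_{i_{\star}}|/s_{i_{\star}}$.

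To finish, I would simply bound the scalar differences $|s_{i^{\star}}-r_{i^{\star}}|$ and $|r_{i_{\star}}-s_{i_{\star}}|$ by $\|r-s\|_2$, using the trivial fact that any single coordinate of a vector is dominated in absolute value by its Euclidean norm. Summing the two resulting bounds gives
\[
\mathsf d_H(s,r)\le\frac{\|r-s\|_2}{r_{i^{\star}}}+\frac{\|r-s\|_2}{s_{i_{\star}}}=(r_{i^{\star}}^{-1}+s_{i_{\star}}^{-1})\|r-s\|_2,
\]
as required. There is no substantive obstacle here; the only subtlety worth flagging is that one must use the two-sided form $\log(a/b)\le (a-b)/b$ (valid for all $a,b>0$) rather than $\log(1+x)\le x$ restricted to $x\ge 0$, since a priori neither $s_{i^{\star}}\ge r_{i^{\star}}$ nor $r_{i_{\star}}\ge s_{i_{\star}}$ need hold; passing to absolute values after applying the logarithmic inequality absorbs any sign issue.
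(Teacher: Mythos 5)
Your proof is correct and follows essentially the same route as the paper's: both start from the representation $\mathsf d_H(s,r)=\max_i\log(s_i/r_i)-\min_i\log(s_i/r_i)$ in \eqref{eq:projMetricAlt}, apply the elementary bound $\log x\le x-1$ to each of the two logarithms (the paper packages the same fact as the two-sided inequality $1-\tfrac{r_i}{s_i}\le\log\tfrac{s_i}{r_i}\le\tfrac{s_i}{r_i}-1$), and then dominate the resulting coordinate differences by $\|r-s\|_2$. Your explicit remark about why passing to absolute values resolves the sign ambiguity is sound and is implicitly what the paper relies on as well.
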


\begin{proof}
We have by \eqref{eq:projMetricAlt} that 
\[
    \mathsf d_{H}(s,r)=\max_{1\leq i\leq N_0}\log\left(\frac{s_i}{r_i}\right)-\min_{1\leq i\leq N_0}\log\left(\frac{s_i}{r_i}\right).
\]
Observe that 
$
    1-\frac{r_i}{s_i}\leq \log\left(\frac{s_i}{r_i}\right)\leq \frac{s_i}{r_i}-1,
$
as follows  the inequalities $\frac{x}{1+x}\leq \log(1+x)\leq x$ for $x>-1$. Whence, 
\[
    \begin{aligned}
    \mathsf d_{H}(s,r)&
\leq r_{i^{\star}}^{-1}\left(s_{i^{\star}}-r_{i^{\star}}\right) -s_{i_{\star}}^{-1}\left(s_{i_{\star}}-r_{i_{\star}}\right)
    \\
    &\leq  \left(r_{i^{\star}}^{-1}+ s_{i_{\star}}^{-1}\right) \left\| s-r\right\|_2.
\end{aligned}
\]
\end{proof}

Note that the bound in \cref{lem:normbounds} is symmetric in the sense that interchanging $s$ and $r$ does not modify the constant. This can be seen by noting that $\argmax_{1\leq i \leq d} \frac{s_i}{r_i}=\argmin_{1\leq i \leq d} \frac{r_i}{s_i}$ and $\argmin_{1\leq i \leq d} \frac{s_i}{r_i}=\argmax_{1\leq i \leq d} \frac{r_i}{s_i}$. 
By combining Lemmas \ref{lem:iterProgPlan} and \ref{lem:normbounds} we arrive at the desired result.  
\begin{proposition}
\label{prop:marginalViolation}
      Let $\underline{b}=\min_{1\leq i\leq N_1}b_i$ and set $0<\gamma < \underline{b}$. Then, for $k\geq 1$,
    \[\mathsf d(\bm \Pi^k,\bm \Pi^{\star})\leq \frac{\left((\bm \Pi^k)^{\intercal}\mathds{1}_{N_0}\right)_{i_{\star}}^{-1}+b_{i^{\star}}^{-1}}{1-\lambda(\bm K)}\|(\bm \Pi^k)^{\intercal}\mathds{1}_{N_0}-b\|
    \]
 where $i^*\in \argmax_{1\leq i \leq N_1} \frac{\left((\bm \Pi^k)^{\intercal}\mathds{1}_{N_0}\right)_i}{b_i}$ and $i_*\in \argmin_{1\leq i \leq N_1} \frac{\left((\bm \Pi^1)^{\intercal}\mathds{1}_{N_0}\right)_i}{b_i}$. Further, there exists $\bar k\leq 1+\frac{R}{\gamma}$ for which  $\|{\bm\Pi^{\bar k}}^{\intercal}\mathds{1}_{N_0}-b\|\leq \gamma$ and 
 $
\mathsf d({\bm\Pi^{\bar k}},\bm \Pi^{\star})\leq \frac{(\underline b-\gamma)^{-1}+\underline b^{-1}}{1-\lambda(\bm K)}\gamma.
$
In particular, setting 
\[
\gamma =\bar\alpha \underline b\text{ for } \bar\alpha=\frac{\delta(1-\lambda(\bm K))+2-\sqrt{\delta^2(1-\lambda(\bm K))^2+4}}{2}\in(0,1),
\]
it holds that $\mathsf d(\bm \Pi^k,\bm\Pi^{\star})=\delta$.

\end{proposition}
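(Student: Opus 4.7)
The argument decomposes naturally into three steps matching the three parts of the proposition. My first step would be to strengthen \cref{lem:iterProgPlan} so that the right-hand side depends on $\bm\Pi^k$ rather than $\bm\Pi^1$: from the Sinkhorn update $v_{k+1}=b/(\bm K^\intercal u_k)$ the identity $\mathsf d_H(v_k,v_{k+1})=\mathsf d_H((\bm\Pi^k)^\intercal\mathds{1}_{N_0},b)$ holds for every $k\geq 1$, not merely $k=1$. Combining this with the second estimate of \cref{lem:iterProgPotentials} and the bound $\mathsf d_H(u_k,u_{k+1})\leq \lambda(\bm K)\,\mathsf d_H(v_k,v_{k+1})$, which follows from \eqref{eq:contraction} and $u_k=a/(\bm K v_k)$, I would obtain
\[
\mathsf d(\bm\Pi^k,\bm\Pi^\star)\leq \frac{(1+\lambda(\bm K))\,\mathsf d_H(v_k,v_{k+1})}{1-\lambda(\bm K)^2}=\frac{\mathsf d_H((\bm\Pi^k)^\intercal\mathds{1}_{N_0},b)}{1-\lambda(\bm K)}.
\]
An application of \cref{lem:normbounds} with $s=(\bm\Pi^k)^\intercal\mathds{1}_{N_0}$ and $r=b$ then replaces the Hilbert projective metric by a Euclidean norm, delivering the first displayed bound of the proposition.

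For the second claim, I would invoke the non-asymptotic complexity result of \citet{dvurechensky2018computational}: after the reformulations of Sinkhorn's recursion recalled before the proposition (reversed marginals, one iteration here corresponding to two there, and a rescaled initialization), that theorem supplies $\bar k\leq 1+R/\gamma$ with $\|u_{\bar k}\odot\bm K v_{\bar k+1}-a\|_1+\|(\bm\Pi^{\bar k})^\intercal\mathds{1}_{N_0}-b\|_1\leq\gamma$, so the $\ell^2$ termination criterion of \cref{alg:SinkhornAlg} is in particular met. Whenever $\gamma<\underline b$, the componentwise consequence $((\bm\Pi^{\bar k})^\intercal\mathds{1}_{N_0})_i\geq\underline b-\gamma>0$ uniformly bounds the data-dependent constants from the first part by $(\underline b-\gamma)^{-1}+\underline b^{-1}$, yielding the claimed estimate on $\mathsf d(\bm\Pi^{\bar k},\bm\Pi^\star)$.

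The third part is a calibration: solving $\big[(\underline b-\gamma)^{-1}+\underline b^{-1}\big]\gamma=\delta(1-\lambda(\bm K))$ for $\gamma\in(0,\underline b)$. Setting $\alpha=\gamma/\underline b$ and $D=\delta(1-\lambda(\bm K))$ reduces the equation to $\alpha^2-(2+D)\alpha+D=0$, whose discriminant simplifies to $4+D^2$; the root relevant to our range is $\alpha=(2+D-\sqrt{4+D^2})/2$, which is exactly the announced $\bar\alpha$. A quick check at the endpoints $D\downarrow 0$ and $D\to\infty$ verifies $\bar\alpha\in(0,1)$, so $\gamma=\bar\alpha\,\underline b$ is admissible.

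I anticipate the main subtlety to be the bookkeeping between the iterates $\bm\Pi^1$ and $\bm\Pi^k$ in the application of the contraction estimates: the stated proposition mixes these two indices, but the refinement of \cref{lem:iterProgPlan} sketched above cleanly yields a bound in which only $\bm\Pi^k$ appears and which implies the claim. Beyond this, the proof is a sequence of routine algebraic manipulations and direct applications of already-established lemmas.
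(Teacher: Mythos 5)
Your proposal is correct and follows essentially the same route as the paper: deriving $\mathsf d(\bm\Pi^k,\bm\Pi^\star)\leq \mathsf d_H((\bm\Pi^k)^\intercal\mathds 1_{N_0},b)/(1-\lambda(\bm K))$ from Lemmas~\ref{lem:iterProgPotentials}--\ref{lem:iterProgPlan}, converting to the Euclidean norm via \cref{lem:normbounds}, invoking \citet{dvurechensky2018computational} for the iteration bound together with the componentwise estimate $((\bm\Pi^{\bar k})^\intercal\mathds 1_{N_0})_i\geq \underline b-\gamma$, and solving the quadratic $\alpha^2-(2+D)\alpha+D=0$ for the calibration. The only (immaterial) difference is that you bound $\mathsf d_H(u_k,u^\star)$ via $\mathsf d_H(u_k,u_{k+1})\leq\lambda(\bm K)\mathsf d_H(v_k,v_{k+1})$ whereas the paper instead uses $\mathsf d_H(u_k,u^\star)\leq\lambda(\bm K)\mathsf d_H(v_k,v^\star)$ before applying the $v$-estimate; both yield the identical final bound.
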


\begin{proof}
   From the proof of  \cref{lem:iterProgPlan}, 
   \[
   \mathsf d(\bm \Pi^k,\bm \Pi^{\star})\leq (1+\lambda(\bm K))\mathsf d_H(v_k,v^{\star})\leq \frac{\mathsf d_H(v_k,v_{k+1})}{1-\lambda(\bm K)}=\frac{\mathsf d_H((\bm \Pi^k)^{\intercal}\mathds 1_{N_0},b)}{1-\lambda(\bm K)},  
   \]
   where the final inequality and equality stem from \cref{lem:iterProgPotentials} 
 and its proof. Applying \cref{lem:normbounds} proves the first claim. 
 
 As for the second claim,  it is clear from the discussion preceding \cref{lem:normbounds} that $\|(\bm \Pi^{\bar k})^{\intercal} \mathds 1_{N_0}-b\|\leq \gamma$ for some $\bar k \leq 1+\frac{R}{\gamma}$. Now, let $w=(\bm \Pi^{\bar k})^{\intercal}\mathds 1_{N_0}$ and observe that $\|w-b\|_{\infty}\leq \|w-b\|\leq \gamma$ such that $b_i-\gamma\leq w_i\leq b_i+\gamma$ for $i=1,\dots,N_1$. Hence $w_i^{-1}\leq (b_i-\gamma)^{-1}\leq(\underline b-\gamma)^{-1}$ as $\gamma <\underline b$. Applying this bound to the previous inequality proves the claim. 

For the final claim, observe that $\bar{\alpha}$ solves the equation $\frac{2\alpha-\alpha^2}{1-\alpha}=\delta(1-\lambda(\bm K))$ for $\alpha\in(0,1)$ (indeed, $\delta(1-\lambda(\bm K))<\sqrt{\delta^2(1-\lambda(\bm K))^2+4} <{\delta(1-\lambda(\bm K))+2} $). Setting $\gamma=\bar{\alpha}\underline{b}$ in the previously derived bound on $\mathsf d(\bm \Pi^k,\bm \Pi^{\star})$ gives 
\[
\mathsf d(\bm \Pi^k,\bm \Pi^{\star})\leq \frac{(1-\bar\alpha)^{-1}+1}{1-\lambda(\bm K)}\bar \alpha=\frac{2\bar\alpha-\bar\alpha^2}{(1-\bar \alpha)(1-\lambda(\bm K))}=\delta.
\]

\end{proof}

The proof of \cref{rmk:InexactSinkhorn} follows by combining Propositions \ref{prop:Iter} and \ref{prop:marginalViolation}; the maximal number of iterations for \cref{alg:SinkhornAlg} to output a matrix $\tilde {\bm \Pi}$ satisfying $\mathsf d(\tilde {\bm \Pi},\bm \Pi^{\star})\leq \delta$ is  
\begin{equation}
\label{eq:maxIters}    
\begin{split}
\tilde k =&
\min \Bigg\{
1+\frac{1}{2\log\left(\lambda(\bm K)\right)}{\log\left(\frac{\delta(1-\lambda(\bm K))}{\mathsf d_H((\bm \Pi^1)^{\intercal}\mathds{1}_{N_0} ,b)}\right)}
,   
   \\
&\hspace{4em}1-\frac{4\underline b^{-1}}{\delta(1-\lambda(\bm K))+2-\sqrt{\delta^2(1-\lambda(\bm K))^2+4}}\log\Bigg(e^{-\|\bm C\|_{\infty}/\varepsilon}\min_{\substack{1\leq i\leq N_0\\1\leq j\leq N_1}}a_i\wedge b_j\Bigg)
\Bigg\},
\end{split}
\end{equation}
which corresponds to an $(e^{\delta}-1)$-oracle for the entropic transport plan in light of \cref{lem:errEstimate}.

\section{Convergence of \texorpdfstring{\cref{alg:fgmNonConvexInexact}}{Algorithm 2}}
\label{app:adaptRates}

In what follows, we slightly adapt the proof of Theorem $2$ in \citet{ghadimi2016accelerated} to conform to the inexact setting. We first clarify that they treat the composite problem 
\[
    \inf_{x\in \mathbb R^d} f(x)+g(x)+\mathcal Q(x),
\]
where $f$ is $L'$-smooth and non-convex, $g$ is  $L''$-smooth and convex, and $\mathcal Q$ is non-smooth and convex with a bounded domain. Hence $f+g$ is $L=L'+L''$ smooth and possibly non-convex. 

Our problem conforms to this setting (up to vectorization) with $f=\OT_{(\cdot),\varepsilon}(\mu_0,\mu_1)$, $g=32\|\cdot\|_{F}^2$, and $\mathcal Q=\mathcal I_{\mathcal D_M}$,  the indicator function of the set $\mathcal D_M$, defined by
\[\mathcal I_{\mathcal D_M}(\bm A)=
\begin{cases}
0,&\text{if }\bm A\in\mathcal D_M,
\\+\infty,&\text{otherwise.}
\end{cases}
\]
When $\Phi$ is convex, we set $f=0$ and $g=\Phi$ hence $L'=0$, $L=L''$.

As $\Phi$ is $L$-smooth, by Lemma $5$ in \citet{ghadimi2016accelerated},
\begin{equation}
\label{eq:LSmoothBound1}
    \Phi(\bm B_k)\leq \Phi(\bm A_{k})+\Tr\left(D\Phi_{[\bm A_{k}]}^{\intercal}(\bm B_k-\bm A_{k})\right)+\frac{L}{2}\left\|\bm B_k-\bm A_{k}\right\|^2_F, 
\end{equation}
and for any $\bm H\in \RR^{d_0\times d_1}$, letting $L'$ denote the Lipschitz constant of $\OT_{(\cdot),\varepsilon}(\mu_0,\mu_1)$, the same result yields 
\begin{equation}
\begin{aligned}
    \label{eq:LSmoothBound2}
    &\Phi(\bm A_{k})-\left((1-\tau_k)\Phi(\bm B_{k-1})+\tau_k\Phi(\bm H)\right)
    \\
    &=\tau_k\left(\Phi(\bm A_k)-\Phi(\bm H)\right)+(1-\tau_k)\left(\Phi(\bm A_k)-\Phi(\bm B_{k-1})\right)
    \\
    &\leq \tau_k\left(\Tr\left(D\Phi_{[\bm A_k]}^{\intercal}(\bm A_{k}-\bm H)\right)+\frac{L'}{2}\left\|\bm H-\bm A_{k}\right\|^2_F\right)
    \\
    &+(1-\tau_k)\left(\Tr\left(D\Phi_{[\bm A_k]}^{\intercal}(\bm A_{k}-\bm B_{k-1})\right)+\frac{L'}{2}\left\|\bm B_{k-1}-\bm A_{k}\right\|^2_F\right)
    \\
    &=\Tr\left(D\Phi_{[\bm A_{k}]}^{\intercal}(\bm A_{k}-\tau_k\bm H-(1-\tau_k)\bm B_{k-1})\right)
    \\&+\frac{L'\tau_k}{2}\left\|\bm H-\bm A_{k}\right\|^2_F+\frac{L'(1-\tau_k)}{2}\underbrace{\left\|\bm B_k-\bm A_{k}\right\|^2_F}_{\tau_k^2\left\|\bm B_{k-1}-\bm C_{k-1}\right\|^2_F},
\end{aligned}
\end{equation}
recalling the update $\bm A_{k}=\tau_k \bm C_{k-1}+(1-\tau_k)\bm B_{k-1}$.

 Denote the subdifferential of $\mathcal I_{\cD_M}$ at $\bm A\in\RR^{d_0\times d_1}$ by 
\[
\partial\mathcal I_{\cD_M}(\bm A)\mspace{-2mu}\coloneqq\mspace{-2mu} \left\{ \bm P\mspace{-2mu} \in\mspace{-2mu}\mathbb R^{d_0\times d_1}:\mathcal I_{\cD_M}(\bm X)\mspace{-2mu}-\mspace{-2mu}\mathcal I_{\cD_M}(\bm A)\geq \Tr\left(\bm P^{\intercal}(\bm X\mspace{-2mu}-\mspace{-2mu}\bm A)\right),\text{ for every }\bm X\mspace{-2mu}\in\mspace{-2mu}\RR^{d_0\times d_1}\right\}.\]
As $\bm C_k$ is optimal for the problem $\argmin_{\bm V\in \RR^{d_0\times d_1}}\left\{\frac{1}{2\gamma_k}\|\bm V-\left(\bm C_{k-1}-\gamma_k\bm G_k\right)\|_F^2+\mathcal I_{\cD_M}(\bm V)\right\}$, there exists $\bm P\in \partial \mathcal I_{\cD_M}(\bm C_k)$ for which $\bm G_k+\bm P+\frac{1}{\gamma_k}(\bm C_k-\bm C_{k+1})=0$ (see Theorem 23.8, Theorem 25.1, and p. 264 in \citealt{rockafellar1997convex}). Thus, for any $\bm U\in \RR^{d_0\times d_1}$,
\begin{align*}
    \Tr\left((\bm G_k+\bm P)^{\intercal}(\bm C_k-\bm U)\right)&=\frac{1}{\gamma_k}\Tr\left((\bm C_k-\bm C_{k-1})^{\intercal} (\bm U-\bm C_k)\right)
    \\
    &=\frac{1}{2\gamma_k}\left(
    \|\bm C_{k-1}-\bm U\|^2_F-\|\bm C_{k}-\bm U\|_F^2-\|\bm C_{k}-\bm C_{k-1}\|_F^2\right),
\end{align*}
where the final line follows from some simple algebra. As $\bm P\in\partial \mathcal I_{\cD_M}(\bm C_k)$, $\Tr(\bm P^{\intercal}(\bm C_k-\bm U))\geq \mathcal I_{\cD_M}(\bm C_k)-\mathcal I_{\cD_M}(\bm U)=-\mathcal I_{\cD_M}(\bm U)$, whence
    \begin{equation}
    \label{eq:traceInterStep}
    \Tr\left(\bm G_k^{\intercal}(\bm C_k-\bm U)\right)\leq \mathcal I_{\cD_M}(\bm U)+
    \frac{1}{2\gamma_k}\left(
    \|\bm C_{k-1}-\bm U\|^2_F-\|\bm C_{k}-\bm U\|_F^2-\|\bm C_{k}-\bm C_{k-1}\|_F^2\right).
\end{equation}    

By the same steps applied to the other subproblem with $\bm B_k$ and $\bm A_k$ taking the place of $\bm C_{k}$ and $\bm C_{k-1}$ respectively,  
\[
\Tr\left(\bm G_k^{\intercal}(\bm B_k-\bm U)\right)\leq \mathcal I_{\cD_M}(\bm U)
+
    \frac{1}{2\beta_k}\left(
    \|\bm A_k-\bm U\|^2_F-\|\bm B_{k}-\bm U\|_F^2-\|\bm B_{k}-\bm A_{k}\|_F^2\right).
\]
Setting $\bm U=\tau_k\bm C_k+(1-\tau_k)\bm B_{k-1}\in \cD_M$ (by convexity) in the previous display, bounding $-\|\bm B_{k}-\bm U\|_F^2$ above by $0$, and recalling that $\bm{A}_k=\tau_k\bm C_{k-1}+(1-\tau_k)\bm B_{k-1}$ such that $\bm A_k-\bm U=\tau_k(\bm C_{k-1}-\bm C_k)$,
\[
\Tr\left(\bm G_k^{\intercal}(\bm B_k-\tau_k\bm C_k+(1-\tau_k)\bm B_{k-1})\right)\leq 
    \frac{1}{2\beta_k}\left(\tau_k^2
    \|\bm C_{k}-\bm C_{k-1}\|^2_F-\|\bm B_{k}-\bm A_{k}\|_F^2\right).
\]
Combining with \eqref{eq:traceInterStep} upon scaling by $\tau_k$, 
\begin{align*}
\Tr\left(\bm G_k^{\intercal}(\bm B_k-\tau_k\bm U+(1-\tau_k)\bm B_{k-1})\right)&\leq\tau_k\mathcal I_{\cD_M}(\bm U)+ 
    \frac{1}{2\beta_k}\left(\tau_k^2
    \|\bm C_{k}-\bm C_{k-1}\|^2_F-\|\bm B_{k}-\bm A_{k}\|_F^2\right),
\\
&
+
    \frac{\tau_k}{2\gamma_k}\left(
    \|\bm C_{k-1}-\bm U\|^2_F-\|\bm C_{k}-\bm U\|_F^2-\|\bm C_{k}-\bm C_{k-1}\|_F^2\right), 
\end{align*}
by the choice of $\tau_k$, $\beta_k$, $\gamma_k$, we have that $\frac{\tau_k^2}{\beta_k}-\frac{\tau_k}{\gamma_k}\leq 0$ such that 
\begin{align*}
\Tr\left(\bm G_k^{\intercal}(\bm B_k-\tau_k\bm U+(1-\tau_k)\bm B_{k-1})\right)&\leq \tau_k\mathcal I_{\cD_M}(\bm U)+
    \frac{\tau_k}{2\gamma_k}\left(
    \|\bm C_{k-1}-\bm U\|^2_F-\|\bm C_{k}-\bm U\|_F^2\right)
    \\
    &-    \frac{1}{2\beta_k}\|\bm B_{k}-\bm A_{k}\|_F^2.
\end{align*}
Combining the equation above with \eqref{eq:LSmoothBound1} and \eqref{eq:LSmoothBound2} and setting $\bm H=\bm U\in \cD_M$ (otherwise the bound is vacuous),
\begin{align*}
   \Phi(\bm B_k)\mspace{-2mu}-\mspace{-2mu}\Phi(\bm H)&\mspace{-2mu}\leq\mspace{-2mu} (1\mspace{-2mu}-\mspace{-2mu}\tau_k)\left(\Phi(\bm B_{k-1})\mspace{-2mu}-\mspace{-2mu}\Phi(\bm H)\right)\mspace{-2mu}+\mspace{-2mu}\Tr\left(D\Phi_{[\bm A_{k}]}^{\intercal}(\bm B_{k}\mspace{-2mu}-\mspace{-2mu}\tau_k\bm H\mspace{-2mu}-\mspace{-2mu}(1\mspace{-2mu}-\mspace{-2mu}\tau_k)\bm B_{k-1})\right)
    \\&\mspace{-2mu}+\mspace{-2mu}\frac{L'\tau_k}{2}\left\|\bm H\mspace{-2mu}-\mspace{-2mu}\bm A_{k}\right\|^2_F\mspace{-2mu}+\mspace{-2mu}\frac{L'(1\mspace{-2mu}-\mspace{-2mu}\tau_k)}{2}{\tau_k^2\left\|\bm B_{k-1}\mspace{-2mu}-\mspace{-2mu}\bm C_{k-1}\right\|^2_F}+\frac{L}{2}\left\|\bm B_k\mspace{-2mu}-\mspace{-2mu}\bm A_{k}\right\|^2_F
    \\
    &\mspace{-2mu}\leq\mspace{-2mu} (1\mspace{-2mu}-\mspace{-2mu}\tau_k)\left(\Phi(\bm B_{k-1})-\Phi(\bm H)\right)\mspace{-2mu}+\mspace{-2mu}\delta'\mspace{-2mu}+\mspace{-2mu}
    \frac{\tau_k}{2\gamma_k}\left(
    \|\bm C_{k-1}\mspace{-2mu}-\mspace{-2mu}\bm H\|^2_F\mspace{-2mu}-\mspace{-2mu}\|\bm C_{k}\mspace{-2mu}-\mspace{-2mu}\bm H\|_F^2\right)
    \\&\mspace{-2mu}+\mspace{-2mu}\frac{L'\tau_k}{2}\left\|\bm H\mspace{-2mu}-\mspace{-2mu}\bm A_{k}\right\|^2_F\mspace{-2mu}+\mspace{-2mu}\frac{L'(1\mspace{-2mu}-\mspace{-2mu}\tau_k)}{2}{\tau_k^2\left\|\bm B_{k-1}\mspace{-2mu}-\mspace{-2mu}\bm C_{k-1}\right\|^2_F}\mspace{-2mu}+\mspace{-2mu}\left(\frac{L}{2}\mspace{-2mu}-\mspace{-3mu}    \frac{1}{2\beta_k}\right)\mspace{-2mu}\left\|\bm B_k\mspace{-2mu}-\mspace{-2mu}\bm A_{k}\right\|^2_F,
\end{align*}
where the inequality follows from the $\delta$-oracle which implies the bound (cf. \eqref{eq:deltapOracle})
\[
    \sup_{\bm Y,\bm Z\in\cD_M}\left\{\left|\Tr\left(\bm G_k-D\Phi_{[\bm A_k]})^{\intercal}( \bm Y-\bm Z)\right)\right|\right\}\leq \delta',
\] 
observing that $\bm B_k,\tau_k\bm H+(1-\tau_k)\bm B_{k-1}\in\mathcal D_M$ by convexity ($\tau_k\in(0,1]$).

Applying Lemma $1$ in \citet{ghadimi2016accelerated} yields, for $A_i=\frac{2}{i(i+1)}$,
\begin{align*}
    \frac{\Phi(\bm B_k)-\Phi(\bm H)}{A_k}&\leq \sum_{i=1}^kA_{i}^{-1}\left(\delta'+
\frac{\tau_i}{2\gamma_i}\left(
    \|\bm C_{i-1}-\bm H\|^2_F-\|\bm C_{i}-\bm H\|_F^2\right)
+\frac{L'\tau_i}{2}\left\|\bm H-\bm A_{i}\right\|^2
\right.
\\
&\left.
+\frac{L'(1-\tau_i)}{2}{\tau_i^2\left\|\bm B_{i-1}-\bm C_{i-1}\right\|^2_F}+\left(\frac{L}{2}-    \frac{1}{2\beta_i}\right)\left\|\bm B_i-\bm A_{i}\right\|^2
    \right)
\\
&\leq 
\frac{\|\bm C_0-\bm H\|_F^2}{2\gamma_1}+
\sum_{i=1}^kA_{i}^{-1}\left(\delta'
+\frac{L'\tau_i}{2}\left\|\bm H-\bm A_{i}\right\|^2
\right.
\\
&\left.
+\frac{L'(1-\tau_i)}{2}{\tau_i^2\left\|\bm B_{i-1}-\bm C_{i-1}\right\|^2_F}+\left(\frac{L}{2}-    \frac{1}{2\beta_i}\right)\left\|\bm B_i-\bm A_{i}\right\|^2
    \right).
\end{align*}
By convexity of $\|\cdot\|^2_F$,
\begin{align*}
    &\left\|\bm H-\bm A_{i}\right\|^2_F
+{\tau_i(1-\tau_i)}{\left\|\bm B_{i-1}-\bm C_{i-1}\right\|^2_F}
    \\
    &\leq 2\left(\|\bm H\|^2_F+\|\bm A_{i}\|^2_F
+\tau_i(1-\tau_i)\left(\|\bm B_{i-1}\|^2_F+\|\bm C_{i-1}\|^2_F\right)\right)
\\
&\leq 2\left(\|\bm H\|^2_F+(1-\tau_i)\|\bm B_{i-1}\|^2_F+\tau_i\|\bm C_{i-1}\|^2_F
+\tau_i(1-\tau_i)\left(\|\bm B_{i-1}\|^2_F+\|\bm C_{i-1}\|^2_F\right)\right)
\\
&\leq 2\left(\|\bm H\|^{2}_F+ (1+\tau_i(1-\tau_i))\max_{\mathcal D_M}\|\cdot\|^2_{F}\right)
\\&\leq 2\left(\|\bm H\|^2_F+\frac{5}{16}M^2\right),
\end{align*}
 observing that $\tau_i\in (0,1]$ hence $\tau_i(1-\tau_i)\leq \frac{1}{4}$. Thus, for $\bm H=\bm B^{\star}$, a global minimizer of $\Phi$,
 \[
\begin{aligned}
    \frac{\Phi(\bm B_k)-\Phi(\bm B^{\star})}{A_k}+\sum_{i=1}^k\frac{1-L\beta_i}{2A_i\beta_i}\left\|\bm B_i-\bm A_{i}\right\|^2 _F 
&\leq 
\frac{\|\bm C_0-\bm B^{\star}\|_F^2}{2\gamma_1}\\
&+
\sum_{i=1}^kA_{i}^{-1}\left(\delta'
\mspace{-2mu}+\mspace{-2mu}{L'\tau_i}\left(\|\bm B^{\star}\|^2_F+\frac{5}{16}M^2\right)
\right).
\end{aligned}
\]
By construction, $\sum_{i=1}^kA_{i}^{-1}{L'\tau_i}=\frac{L'}{A_k}$, and $\Phi(\bm B_k)-\Phi(\bm B^{\star})\geq 0$. It follows that 
\begin{align*}
&\min_{i=1}^k \left\|\beta_i^{-1}\left(\bm B_i-\bm A_i\right)\right\|^2_F\\&\leq 2\left(\sum_{i=1}^k\frac{\beta_i\left(1-L\beta_i\right)}{A_i}\right)^{-1}\left(
\frac{\|\bm C_0-\bm B^{\star}\|_F^2}{2\gamma_1}
+
\sum_{i=1}^kA_{i}^{-1}\delta'
\right.
\left.+\frac{L'}{A_k}\left(\|\bm B^{\star}\|^2_F+\frac{5}{16}M^2d_0^2d_1^2\right)\right).
\end{align*}
As $\beta_i=\frac{L}{2}$, $\gamma_1=\frac 1{4L}$, and $A_i=\frac{2}{i(i+1)}$, $\sum_{i=1}^k\frac{\beta_i\left(1-L\beta_i\right)}{A_i}=\frac{1}{4L}\sum_{i=1}^k{A_i^{-1}}=\frac{k(k+1)(k+2)}{24 L}$, so 
\begin{align*}
\min_{i=1}^k \left\|\beta_i^{-1}\left(\bm B_i\mspace{-2mu}-\mspace{-2mu}\bm A_i\right)\right\|^2_F\leq \frac{96L^2}{k(k\mspace{-2mu}+\mspace{-2mu}1)(k\mspace{-2mu}+\mspace{-2mu}2)}\|\bm C_0\mspace{-2mu}-\mspace{-2mu}\bm B^{\star}\|^2_F +8L\delta'+\frac{24 LL'}{N}\left(\|\bm B^{\star}\|^2_F+\frac{5M^2}{16}\right).  
\end{align*}
This proves the claimed result in the non-convex setting. 

In the convex regime, recall from the prior discussion that we may set $L'=0$ in the previous display, proving the claim.

\section{Additional Results}

\subsection{Proof of \texorpdfstring{\cref{lem:UpsilonDerivative}}{Lemma 3}}
\label{app:UpsilonDerivative}

The proof of \cref{lem:UpsilonDerivative} follows from the following lemma coupled with the chain rule for Fr{\'e}chet differentiable maps. 

\begin{lemma}
    \label{lem:DerivativeofIntegral}
    Let $\mu_i\in\mathcal P(\mathbb R^{d_i})$, for $i=0,1$, be compactly supported with $\supp(\mu_i)=S_i$. Then, the map $f\in \mathcal C(S_0\times S_1)\mapsto \left( \int e^{f(\cdot,y)}d\mu_1(y),\int e^{f(x,\cdot)}d\mu_0(x)\right)\in\mathcal C(S_0)\times \mathcal C(S_1)$ is smooth  with 
 first derivative  at $f\in\mathcal C(S_0\times S_1)$ given by
    \[
        h\in \mathcal C(S_0\times S_1)\mapsto \left(\int h(\cdot,y)e^{f(\cdot,y)}d\mu_1(y),\int h(x,\cdot)e^{f(x,\cdot)}d\mu_0(x)\right)\in\mathcal C(S_0)\times \mathcal C(S_1).
    \]
\end{lemma}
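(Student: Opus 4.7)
The strategy would be to decompose the map in question as $T \circ \mathsf{Exp}$, where $\mathsf{Exp}\colon \mathcal{C}(S_0\times S_1) \to \mathcal{C}(S_0 \times S_1)$ is the pointwise exponential $g \mapsto e^g$ and $T\colon \mathcal{C}(S_0 \times S_1) \to \mathcal{C}(S_0) \times \mathcal{C}(S_1)$ is the partial integration map
\[
T(g) = \left( \int g(\cdot, y)\, d\mu_1(y),\ \int g(x, \cdot)\, d\mu_0(x) \right),
\]
and to establish smoothness of each factor separately. For $T$, I would first verify that $T(g)$ really lies in $\mathcal{C}(S_0) \times \mathcal{C}(S_1)$: since $S_0 \times S_1$ is compact, $g$ is uniformly continuous, which lets one pass continuity through the integrals via dominated convergence. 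As $\mu_0,\mu_1$ are probability measures, $\|T(g)\|_\infty \leq \|g\|_\infty$, so $T$ is a bounded linear operator and hence smooth with $DT_{[g]} = T$ at every $g$.

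For $\mathsf{Exp}$, the key identity is $e^{a+b} - e^a - e^a b = e^a(e^b - 1 - b)$. Combined with the scalar bound $|e^t - 1 - t| \leq \tfrac{t^2}{2} e^{|t|}$ for $t\in\mathbb{R}$, this gives, uniformly on $S_0 \times S_1$,
\[
\|\mathsf{Exp}(f + h) - \mathsf{Exp}(f) - e^f h\|_\infty \leq \tfrac{1}{2}\, e^{\|f\|_\infty}\, e^{\|h\|_\infty}\, \|h\|_\infty^2 = o(\|h\|_\infty),
\]
as $\|h\|_\infty \to 0$. This establishes Fr\'echet differentiability of $\mathsf{Exp}$ at every $f$ with $D\mathsf{Exp}_{[f]}(h) = e^f h$ (multiplication understood pointwise; since $\mathcal{C}(S_0\times S_1)$ is a Banach algebra with $\|e^f h\|_\infty \leq \|e^f\|_\infty \|h\|_\infty$, this is indeed a bounded linear map in $h$).

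To upgrade to smoothness, I would iterate the previous argument: an analogous expansion shows the $n$-th Fr\'echet derivative of $\mathsf{Exp}$ at $f$ is the symmetric multilinear form $(h_1,\ldots,h_n) \mapsto e^f\, h_1 \cdots h_n$. Alternatively, one can invoke that the power series $\mathsf{Exp}(f) = \sum_{n\geq 0} f^n/n!$ converges absolutely in the commutative Banach algebra $\mathcal{C}(S_0\times S_1)$, so $\mathsf{Exp}$ is entire and in particular smooth. The chain rule then yields smoothness of $T \circ \mathsf{Exp}$ with first derivative at $f$ applied to $h$ equal to $T(e^f h)$, which matches the formula in the statement.

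The only real obstacle is ensuring the remainder estimate for $\mathsf{Exp}$ is uniform on $S_0 \times S_1$; this is essentially free from the pointwise scalar inequality above, since the resulting constant $\tfrac{1}{2} e^{\|f\|_\infty} e^{\|h\|_\infty}$ depends only on the sup-norms of $f$ and $h$ and not on the base point $(x,y)$. Compactness of $S_0\times S_1$ (ensuring uniform continuity) and the finiteness of $\mu_0,\mu_1$ (controlling $T$) are the only properties of the marginals used.
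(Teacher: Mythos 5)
Your proposal is correct and follows essentially the same route as the paper's proof: decompose the map as partial integration composed with the pointwise exponential, establish Fr\'echet differentiability of the exponential via a second-order Taylor remainder estimate that is uniform in sup-norm, observe that the integration is a bounded linear (hence smooth) operator, and conclude by the chain rule. Your write-up is somewhat more explicit (verifying the codomain, recording the operator-norm bound $\|T\|\le 1$, and noting the Banach-algebra power-series alternative for smoothness), but the key estimates and the structure of the argument coincide with the paper's.
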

\begin{proof}
    First, we show that the map $f\in\mathcal C(S_0\times S_1)\mapsto e^f\in \mathcal C(S_0\times S_1)$ is Fr{\'e}chet differentiable with $D(e^{(\cdot)})_{[f]}(h)=he^f$. Fix $f\in\mathcal C(S_0\times S_1)$ and consider 
    \[
        \lim_{\substack{h\in\mathcal C(S_0\times S_1)\\\|h\|_{\infty,S_0\times S_1}\to 0}}\frac{\left\|e^{f+h}-e^f-he^f\right\|_{\infty,S_0\times S_1}}{\|h\|_{\infty,S_0\times S_1}}\leq \|e^f\|_{\infty,S_0\times S_1} \lim_{\substack{h\in\mathcal C(S_0\times S_1)\\\|h\|_{\infty,S_0\times S_1}\to 0}}\frac{\left\|e^{h}-1-h\right\|_{\infty,S_0\times S_1}}{\|h\|_{\infty,S_0\times S_1}}.
    \]
   Fix arbitrary $(x,y)\in S_0\times S_1$. By a Taylor expansion, 
   \[
        e^{h(x,y)}=1+h(x,y)+\frac 12e^{\xi(x,y)}h^2(x,y),
   \]
   where $|\xi(x,y)|\in [0,|h(x,y)|]$ i.e. $\|\xi\|_{\infty, S_0\times S_1}\leq \|h\|_{\infty, S_0\times S_1}$. That is, 
   \[\lim_{\substack{h\in\mathcal C(S_0\times S_1)\\\|h\|_{\infty,S_0\times S_1}\to 0}}
        \frac{\left\|e^{h}-1-h\right\|_{\infty,S_0\times S_1}}{\|h\|_{\infty,S_0\times S_1}}\leq\lim_{\substack{h\in\mathcal C(S_0\times S_1)\\\|h\|_{\infty,S_0\times S_1}\to 0}} \frac{1}{2}e^{\|\xi\|_{\infty,S_0\times S_1}}\|h\|_{\infty,S_0\times S_1}=0. 
   \]

On the other hand,
   the derivative of $f\in\mathcal C(S_0\times S_1)\mapsto \int f(x,y)d\mu_1(y)\in \mathcal C(S_0)$ at any point is given by $h\in\mathcal C(S_0\times S_1)\mapsto \int h(x,y)d\mu_1(y)\in \mathcal C(S_0)$. The claimed expression for the first derivative then follows by the chain rule.   The derivatives of this map can be computed to arbitrary order inductively by the prior argument.   
\end{proof}

\begin{proof}[Proof of \cref{lem:UpsilonDerivative}]
Observe that the map $(\bm A,\varphi_0,\varphi_1)\in \mathbb R^{d_0\times d_1}\times \mathfrak E\mapsto \varphi_0\oplus \varphi_1-c_{\bm A}\in\mathcal C(S_0\times S_1)$ is smooth with first derivative at $(\bm A,\varphi_0,\varphi_1)\in \mathbb R^{d_0\times d_1}\times \mathfrak E$ given by 
\[
(\bm B,h_0,h_1)\in \mathbb R^{d_0\times d_1}\times \mathfrak E\mapsto h_0\oplus h_1+32x^{\intercal}\bm By\in \mathcal C(S_0\times S_1).
\]
The result then follows from \cref{lem:DerivativeofIntegral} by applying the chain rule. 
\end{proof}

\subsection{Compactness of \texorpdfstring{$\mathcal L$}{L}}

\label{sec: compactness}

\begin{lemma}[Example 2 in \citealt{yosida1995functional}]
\label{lem:integralHS}
Let $\varepsilon>0$, $\mu_0\in \cP(\RR^{d_0})$, $\mu_1\in\cP(\RR^{d_1})$, and $\bm A\in\RR^{d_0\times d_1}$ be arbitrary and let $(\varphi_0^{\bm A},\varphi_1^{\bm A})$ be EOT potentials for $\OT_{\bm A,\varepsilon}(\mu_0,\mu_1)$. Then,
the map $\mathcal L:L^2(\mu_0)\times L^2(\mu_1)\mapsto L^2(\mu_0)\times L^2(\mu_1)$ defined by  
    \[
    \mathcal L(f_0,f_1)=\left(\int f_1(y)e^{\frac{\varphi_0^{\bm A}(x)+\varphi_1^{\bm A}(y)-c_{\bm A}(x,y)}{\varepsilon}}d\mu_1(y),\int f_0(x)e^{\frac{\varphi_0^{\bm A}(x)+\varphi_1^{\bm A}(y)-c_{\bm A}(x,y)}{\varepsilon}}d\mu_0(x)\right),
    \]
    is compact.
\end{lemma}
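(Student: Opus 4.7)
\textbf{Proof plan for \cref{lem:integralHS}.} The plan is to exhibit $\mathcal L$ as built out of two integral operators whose kernels lie in $L^2$ of the product space, which is the classical situation in which Hilbert--Schmidt theory furnishes compactness.

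First, I would exploit the compact-support setting carried over from \cref{sec:asymptoticNormalityCompact}. Since $S_0=\supp(\mu_0)$ and $S_1=\supp(\mu_1)$ are compact, the cost $c_{\bm A}$ is continuous hence bounded on $S_0\times S_1$, and \cref{prop:PhiDerivative} (or rather the discussion preceding \cref{lem:UpsilonDerivative}) guarantees that any choice of EOT potentials $(\varphi_0^{\bm A},\varphi_1^{\bm A})$ can be taken to be continuous, hence bounded, on $S_0$ and $S_1$ respectively. Consequently the kernel
\[
k(x,y)\coloneqq e^{\frac{\varphi_0^{\bm A}(x)+\varphi_1^{\bm A}(y)-c_{\bm A}(x,y)}{\varepsilon}}
\]
is bounded and continuous on $S_0\times S_1$, in particular $k\in L^2(\mu_0\otimes\mu_1)$ as well as $k(x,\cdot)\in L^2(\mu_1)$ and $k(\cdot,y)\in L^2(\mu_0)$ uniformly.

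Next, decompose $\mathcal L$ as $\mathcal L(f_0,f_1)=(T_1 f_1,T_0 f_0)$, where
\[
T_1\colon L^2(\mu_1)\to L^2(\mu_0),\quad (T_1g)(x)=\int k(x,y) g(y)\,d\mu_1(y),
\]
and symmetrically $T_0\colon L^2(\mu_0)\to L^2(\mu_1)$, $(T_0h)(y)=\int k(x,y)h(x)\,d\mu_0(x)$. By the standard Hilbert--Schmidt criterion (the same result cited in the statement as Example~2 of \citealt{yosida1995functional}), since $\iint |k(x,y)|^2\,d\mu_0\otimes\mu_1(x,y)<\infty$, each of $T_0$ and $T_1$ is Hilbert--Schmidt, hence compact.

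Finally, I would conclude that $\mathcal L$ is itself compact. Endow $L^2(\mu_0)\times L^2(\mu_1)$ with the Hilbert product norm $\|(f_0,f_1)\|^2=\|f_0\|_{L^2(\mu_0)}^2+\|f_1\|_{L^2(\mu_1)}^2$ and take a bounded sequence $(f_0^{(n)},f_1^{(n)})$ in this space; then $(f_i^{(n)})$ is bounded in $L^2(\mu_i)$ for $i=0,1$. By compactness of $T_1$, some subsequence of $T_1 f_1^{(n)}$ converges in $L^2(\mu_0)$, and by compactness of $T_0$ a further subsequence makes $T_0 f_0^{(n)}$ converge in $L^2(\mu_1)$; concatenating yields a convergent subsequence of $\mathcal L(f_0^{(n)},f_1^{(n)})$ in the product norm. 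This is exactly the definition of compactness. There is no real obstacle here beyond confirming that the compact-support hypothesis of \cref{sec:asymptoticNormalityCompact} indeed forces the integrability of $k^2$; once that is noted the argument is a direct application of classical Hilbert--Schmidt theory.
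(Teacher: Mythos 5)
Your proof is correct, but it takes a genuinely different route from the paper's. The paper does not invoke the Hilbert--Schmidt criterion at all (despite the citation to Yosida); it argues directly from the definition of compactness: take a bounded sequence $f_n$ in $L^2(\mu_0)$, pass via Eberlein--\v Smulian to a weakly convergent subsequence $f_n\rightharpoonup f$, observe that $\mathcal L_2 f_n(y)\to \mathcal L_2 f(y)$ pointwise because $\xi(\cdot,y)\in L^2(\mu_0)$, and then use dominated convergence twice to upgrade pointwise convergence first to weak convergence and then to norm convergence of $\mathcal L_2 f_n$ in $L^2(\mu_1)$. Your argument instead reads off that the kernel $k$ is bounded on $S_0\times S_1$, hence $k\in L^2(\mu_0\otimes\mu_1)$ because the marginals are probability measures, so $T_0$ and $T_1$ are Hilbert--Schmidt and therefore compact, and then you stack the two compact pieces via a diagonal subsequence. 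Both routes ultimately lean on the same facts: compact support of $\mu_0,\mu_1$ makes $\varphi_0^{\bm A},\varphi_1^{\bm A},c_{\bm A}$ bounded, whence the kernel is bounded and square-integrable. What the Hilbert--Schmidt route buys you is brevity and the use of an off-the-shelf named theorem; what the paper's route buys is a self-contained argument that reproves the needed special case from scratch without assuming the reader has the HS machinery at hand. Your version is arguably closer to the cited reference than the paper's own proof.

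One small point worth stating explicitly in a final write-up: your final step ``concatenate two nested subsequences to get a convergent subsequence of $\mathcal L(f_0^{(n)},f_1^{(n)})$'' is equivalent to the (standard and worth noting) observation that $\mathcal L$ is the block anti-diagonal operator $\begin{pmatrix}0 & T_1\\ T_0 & 0\end{pmatrix}$ on $L^2(\mu_0)\oplus L^2(\mu_1)$, and a block operator with compact blocks is compact. Either phrasing is fine; no gap.
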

\begin{proof}
    For simplicity, we prove only that 
    \[
    \mathcal L_2:f\in L^2(\mu_0)\mapsto \int f(x)\xi(x,\cdot)d\mu_0(x)\in L^2(\mu_1),
    \]
    is a compact operator for $\xi:(x,y)\in\RR^{d_0}\times \RR^{d_1}\mapsto e^{\frac{\varphi_0^{\bm A}(x)+\varphi_1^{\bm A}(y)-c_{\bm A}(x,y)}{\varepsilon}}$. For any $y\in \RR^{d_1}$ and $f\in L^2(\mu_0)$, $\left|\mathcal L_2(f)(y)\right|^2\leq \|f\|_{L^2(\mu_0)}^2\int|\xi(\cdot,y)|^2d\mu_0$, as $\xi(\cdot,y)$ is bounded on $\supp(\mu_0)$ this operator is well-defined.

    Let $f_n$ be a bounded sequence in $L^2(\mu_0)$. By the Eberlein-\v Smulian theorem \citep[p. 141]{yosida1995functional}, up to passing to a subsequence, $f_n$ converges weakly to $f\in L^2(\mu_0)$. For fixed $y\in\RR^{d_1}$, $\xi(\cdot,y)\in L^2(\mu_0)$, hence $\mathcal L_2(f_n)(y)\to \mathcal L_2(f)(y)$ and it follows from the dominated convergence theorem that, for any $g\in L^2(\mu_1)$, $
        \int \mathcal L_2(f_n)gd\mu_1\to\int \mathcal L_2(f)gd\mu_1$ such that $\mathcal L_2(f_n)\to \mathcal L_2(f)$ weakly in $L^2(\mu_1)$. Also, by dominated convergence, 
        \[
        \|\mathcal L_2(f_n)\|_{L^2(\mu_1)}^2=\int \mathcal L_2(f_n)^2d\mu_1\to\int \mathcal L_2(f)^2d\mu_1=\|\mathcal L_2(f)\|_{L^2(\mu_1)}^2,
        \] 
         such that $\mathcal L_2(f_n)\to \mathcal L_2(f)$ strongly in $L^2(\mu_1)$. As $f_n$ was an arbitrary bounded sequence in $L^2(\mu_0)$ and $\mathcal L_2(f_n)\to \mathcal L_2(f)$ strongly in $L^2(\mu_1)$ up to a subsequence, $\mathcal L_2$ is a compact operator. 
\end{proof}

\end{document}